\def\co{\colon\thinspace}
\DeclareMathAlphabet{\mathpzc}{OT1}{pzc}{m}{it}
\DeclareMathOperator{\rk}{rk}
\newcommand{\Z}{\mathbb{Z}}
\newcommand{\Q}{\mathbb{Q}}
\newcommand{\R}{\mathbb{R}}
\newcommand{\F}{\mathbb{F}}
\newcommand{\Zp}{\Z/p\Z}
\newcommand{\Ztwo}{\Z/2\Z}
\newcommand{\HFhat}{\widehat{\mathit{HF}}}
\newcommand{\HFplus}{\mathit{HF}^{+}}
\newcommand{\CFKminus}{\mathit{CFK}^-}
\newcommand{\CFKinfty}{\mathit{CFK}^\infty}
\newcommand{\HFKhat}{\widehat{\mathit{HFK}}}
\newcommand{\CFD}{\widehat{\mathit{CFD}}}
\newcommand{\spin}{\mathfrak{s}}
\newcommand{\spinc}{\operatorname{Spin}^c}
\newcommand{\cylinder}{\overline{T}}
\newcommand{\pcylinder}{\cylinder_\bullet}
\newcommand{\torus}{T}
\newcommand{\ptorus}{\torus_\bullet}
\newcommand{\plane}{\widetilde{T}}
\newcommand{\pplane}{\plane_\bullet}
\newcommand{\curveset}{\boldsymbol{\gamma}}
\newcommand{\hatGamma}{\widehat\Gamma}
\newcommand{\gr}{\operatorname{gr}}
\newcommand{\mrel}{M_{rel}}
\newcommand{\mrelsum}{\Sigma\mrel}
\newcommand{\boldtau}{\boldsymbol{\tau}}
\newtheorem{theorem}{Theorem}%[section]
\newtheorem{corollary}[theorem]{Corollary}
\newtheorem{proposition}[theorem]{Proposition}
\newtheorem{lemma}[theorem]{Lemma}
\newtheorem{conjecture}[theorem]{Conjecture}
\theoremstyle{definition}
\newtheorem{definition}[theorem]{Definition}
\newtheorem{question}[theorem]{Question}
\newtheorem{remark}[theorem]{Remark}
\newtheorem{example}[theorem]{Example}
\title{Heegaard Floer homology and cosmetic surgeries in $S^3$}
\author[Jonathan Hanselman]{Jonathan Hanselman}
\address {Department of Mathematics, Princeton University.\newline \it{E-mail address:} \tt{jh66@princeton.edu}}
\thanks{The author was partially supported by NSF grant DMS-1812527 }
\begin{document}
\maketitle

\begin{abstract} If a knot $K$ in $S^3$ admits a pair of truly cosmetic surgeries, we show that the surgery slopes are either $\pm 2$ or $\pm 1/q$ for some value of $q$ that is explicitly determined by the knot Floer homology of $K$. Moreover, in the former case the genus of $K$ must be two, and in the latter case there is bound relating $q$ to the genus and the Heegaard Floer thickness of $K$. As a consequence, we show that the cosmetic crossing conjecture holds for alternating knots (or more generally, Heegaard Floer thin knots) with genus not equal to two. We also show that the conjecture holds for any knot $K$ for which each prime summand of $K$ has at most 16 crossings; our techniques rule out cosmetic surgeries in this setting except for slopes $\pm 1$ and $\pm 2$ on a small number of knots, and these remaining examples can be checked by comparing hyperbolic invariants. These results make use of the surgery formula for Heegaard Floer homology, which has already proved to be a powerful tool for obstructing cosmetic surgeries; we get stronger obstructions than previously known by considering the full graded theory. We make use of a new graphical interpretation of knot Floer homology and the surgery formula in terms of immersed curves, which makes the grading information we need easier to access.
\end{abstract}

\section{Introduction}
Given a knot $K$ in $S^3$, two surgeries $S^3_r(K)$ and $S^3_{r'}(K)$  with $r\neq r'$ are said to be \emph{cosmetic} if $S_r(K)$ and $S_{r'}(K)$ are diffeomorphic as unoriented manifolds, and \emph{truly cosmetic} if $S_r(K) \cong S_{r'}(K)$ (here, and throughout the paper, $\cong$ denotes orientation preserving diffeomorphism). Surgeries that are cosmetic but not truly cosmetic are called \emph{chirally cosmetic}. Cosmetic surgeries are one way in which the surgery characterization of a 3-manifold can fail to be unique. Examples of chirally cosmetic surgeries are not difficult to find, but Gordon conjectured that there are no truly cosmetic surgeries on nontrivial knots \cite[Conjecture 6.1]{Gordon}  (see also \cite[Problem 1.81 A]{KirbyProblemList}). This conjecture is stated more generally for knots in arbitrary 3-manifolds, with the notion of truly cosmetic surgery suitably extended, but we will only consider the case of knots in $S^3$.

\begin{conjecture}[Cosmetic Surgery Conjecture in $S^3$]\label{conj:cosmetic}
Let $K$ be a nontrivial knot in $S^3$. If $r \neq r'$, then $S^3_r(K) \not\cong S^3_{r'}(K)$.
\end{conjecture}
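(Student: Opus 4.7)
The plan is to obstruct truly cosmetic surgeries by exploiting the graded Heegaard Floer invariants of the surgered three-manifolds. If $S^3_r(K) \cong S^3_{r'}(K)$, then the absolutely $\Q$-graded, $\spinc$-decomposed groups $\HFplus(S^3_r(K),\spin)$ must match those of $S^3_{r'}(K)$. Via the Ozsv\'ath--Szab\'o mapping cone surgery formula these graded invariants are completely determined by $\CFKinfty(K)$, so a hypothetical cosmetic pair translates into an algebraic coincidence inside a single knot complex. The overall strategy is to push this coincidence until only a tiny explicit list of candidate slope/genus combinations survives, and then attack the residual cases by separate means.

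First I would invoke the easiest graded obstructions: the $d$-invariants of $S^3_r(K)$, together with the rank of $\HFhat$, already pin down the slopes very tightly, forcing $r' = -r$ and restricting $|r|$ to a short list (arguments of this flavor are due to Ozsv\'ath--Szab\'o, Wu, and Ni--Wu). Next I would go beyond $d$-invariants and compare the full graded structure of $\HFplus$ in each $\spinc$-structure; this is where the full strength of the surgery formula, not merely its Casson-invariant shadow, is needed. The bookkeeping is delicate because for $r = \pm 1/q$ the manifold is an integer homology sphere, so all of the information sits in a single $\spinc$-structure and one must extract several independent graded numerical invariants from one mapping cone and compare them symmetrically under $r \leftrightarrow -r$.

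To make this graded computation tractable, I would adopt the immersed multicurve reformulation of knot Floer homology, which replaces $\CFKinfty(K)$ by a collection of decorated curves on a marked torus. Intersection with a line of slope $r$ then computes the surgery invariants, and both the $\spinc$-decomposition and the absolute grading of each intersection point admit a direct geometric description; the involution $r \leftrightarrow -r$ becomes a visible symmetry of the picture. The identities forced by a cosmetic pair therefore turn into geometric constraints on the curves, which one can quantify in terms of the genus $g(K)$ and the Heegaard Floer thickness of $K$.

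The main obstacle, which I do not expect to resolve with these tools alone, is the range of slopes $\pm 1/q$ for large $q$ together with the borderline case $r = \pm 2$. As $q$ grows the intersection picture allows more slack, so the best one can hope for from the surgery formula is an explicit bound of the form $|q| \le f(g(K), \operatorname{th}(K))$, plus a separate argument showing that $r = \pm 2$ forces $g(K) = 2$. Finishing the full conjecture would then require disposing of the finitely many surviving examples by an outside tool such as hyperbolic invariants on specifically tabulated knots, or by strengthening the graded obstruction with further algebraic input such as involutive Heegaard Floer homology.
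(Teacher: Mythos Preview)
The statement you are addressing is a \emph{conjecture}, and the paper does not prove it; it remains open. What the paper actually establishes is Theorem~\ref{thm:main} (the slope restrictions $\{\pm 2\}$ or $\{\pm 1/q\}$ with an explicit formula for $q$ and the thickness bound), together with verification on all connected sums of prime knots up to 16 crossings via hyperbolic invariants on the finitely many residual cases. Your proposal is therefore not a proof of the stated conjecture, and to your credit you say so explicitly: you identify the range $\pm 1/q$ and the case $r = \pm 2$, $g(K) = 2$ as obstacles you do not expect to resolve, and you note that finishing would require outside tools on finitely many surviving examples.

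That said, your outlined strategy is essentially the paper's own. The paper does exactly what you describe: it first invokes the Ni--Wu $d$-invariant argument to force $r' = -r$ and to guarantee $\CFKinfty(K)$ has an isolated generator; it then passes to the immersed multicurve picture, intersects with lines of slope $\pm p/q$, and compares the full relative Maslov gradings (not just their sum) under the visible reflection symmetry $r \leftrightarrow -r$; this yields the explicit bound $q \le (th(K) + 2g)/(2g(g-1))$ you anticipate, together with the $g(K) = 2$ constraint for $r = \pm 2$. For tabulated knots the paper then appeals to SnapPy's volume and Chern--Simons computations on the surviving 337 knots. So your plan and the paper's execution coincide, but both stop short of the conjecture, and you should be clear that what you have sketched is a route to the paper's partial results rather than to Conjecture~\ref{conj:cosmetic} itself.
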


The conjecture can be viewed as a generalization of the knot complement problem, solved by Gordon and Luecke \cite{GordonLuecke}, which states that no pair of cosmetic surgeries contains the trivial surgery $S^3_\infty(K)$. In addition to this, several partial results related to Conjecture \ref{conj:cosmetic} are known. Boyer and Lines used surgery formulae for Casson-Walker and Casson-Gordon invariants to place a restriction on $\Delta_K(t)$ for knots $K$ admitting truly cosmetic surgeries \cite{BoyerLines}. Much of the recent progress on Conjecture \ref{conj:cosmetic} has made use of Heegaard Floer homology, which has lead to several results obstructing cosmetic or truly cosmetic surgeries. For any pair of truly cosmetic surgeries, the surgery slopes were shown first to have opposite signs \cite{OzSz:rational-surgeries, Wu}, and then to in fact be opposite \cite{NiWu}. If $K$ admits truly cosmetic surgeries, then the genus of $K$ is not one \cite{Wang} and the knot Floer homology of $K$ satisfies certain additional constraints \cite{NiWu, Gainullin}.

Heegaard Floer homology has already proved to be a very powerful tool at distinguishing surgeries, but it has not been used to its full potential. Each application to the cosmetic surgery conjecture mentioned above uses only partial information from Heegaard Floer homology, either the total rank of $\HFhat$ \cite{OzSz:rational-surgeries, Wu}, the $d$-invariants \cite{Wang, NiWu}, or the Euler characteristic of $\HFplus_{red}$ \cite{NiWu, Gainullin}. We will harness (almost) all of the information in Heegaard Floer homology to obtain much stronger obstructions to truly cosmetic surgeries. In particular, we consider the isomorphism type of $\HFhat$ as an absolutely graded vector space, which amounts to keeping track of the grading for each generator in addition to the rank. This is facilitated by a recent reinterpretation of Heegaard Floer invariants for manifolds with torus boundary in terms of collections of immersed curves due to the author, Rasmussen, and Watson \cite{HRW, HRW:companion}. In particular, this provides a combinatorial framework that makes comparing gradings for surgeries on knots easier. We give obstructions to truly cosmetic surgeries in terms of numerical invariants $n_s$ extracted from knot Floer homology, the Heegaard Floer thickness $th(K)$, and the Seifert genus $g(K)$. The Heegaard Floer thickness is the difference between the maximal and minimal $\delta$-grading in $\HFKhat$. The integers $n_s$ will be introduced in Section \ref{sec:invts-from-gamma}; briefly, they count the intersection number of the immersed multicurve representing knot Floer homology with a horizontal line at height $i$. Our main result is the following:
\begin{theorem}\label{thm:main}
If $K\subset S^3$ is a nontrivial knot and $S^3_r(K) \cong S^3_{r'}(K)$ for $r \neq r'$, then
\begin{itemize}
\item[(i)] The pair of slopes $\{r,r'\}$ is either $\{\pm 2\}$ or $\{\pm \tfrac 1 q\}$ where $\displaystyle q =  \frac{n_0 + 2 \sum_{s = 1}^\infty n_s}{4 \sum_{s=1}^\infty s^2 n_s}$;
\item[(ii)] if $\{r,r'\}$ is $\{\pm 2\}$ then $g(K) = 2$ and $n_0 = 2 n_1$;
\item[(iii)] if $\{r,r'\}$ is $\{\pm \tfrac 1 q\}$ then $\displaystyle q \le \frac{th(K) + 2g(K)}{2g(K)(g(K)-1)}$.
\end{itemize}
\end{theorem}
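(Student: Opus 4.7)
The plan is to exploit the immersed-curve presentation of the knot Floer invariants from \cite{HRW, HRW:companion} to extract the \emph{graded} isomorphism type of $\HFhat(S^3_r(K))$ in full, and then to compare this data between the two slopes of a truly cosmetic pair. By the results of \cite{OzSz:rational-surgeries, Wu, NiWu} one may assume the pair consists of opposite slopes $\{r,-r\}$ with $r>0$, so the task reduces to comparing $\HFhat(S^3_r(K))$ and $\HFhat(S^3_{-r}(K))$ as absolutely graded vector spaces. I would realize each side by pairing the multicurve $\gamma_K$ with the peg representing slope $\pm r$ in the marked cylinder; the intersection points, their absolute Maslov gradings, and their spin$^c$ labels are then expressible combinatorially in terms of the integers $n_s$ and of the local $\delta$-grading data carried along $\gamma_K$.

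For part (i), I would compare, spin$^c$-structure by spin$^c$-structure, suitable weighted sums of the absolute gradings of the generators of $\HFhat$ contributed by the two pegs. Because $\gamma_K$ is symmetric under the involution exchanging height $s$ with $-s$, a great many contributions cancel, and the resulting system reduces to a small number of rational identities in $p$ and $q$ (writing $r=p/q$ in lowest terms). Combining these identities forces either $p/q=\pm 2$ or $p=\pm 1$, and in the latter case the linear relation among the $n_s$ that remains can be rearranged to give the closed formula for $q$ stated in (i).

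For part (ii), specialize to $r=2$ and redo the comparison at the level of graded ranks in each of the two spin$^c$ structures on $S^3_{\pm 2}(K)$. The slope-$2$ peg meets only the portion of $\gamma_K$ of height at most one, so matching graded ranks in both spin$^c$ structures forces every component of $\gamma_K$ of larger height to be absent. By the genus-detection of knot Floer homology this gives $g(K)\le 2$; nontriviality of $K$ gives $g(K)\ge 1$, and Wang's result \cite{Wang} excludes $g(K)=1$, leaving $g(K)=2$. The residual rank-balance equation between the two spin$^c$ structures then collapses to $n_0=2n_1$.

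Part (iii) is the most delicate, and here the main obstacle lies. For $r=1/q$, the absolute Maslov grading at an intersection point of $\gamma_K$ with the slope-$1/q$ peg at height $s$ is, up to a $\delta$-grading correction of absolute value at most $th(K)+2g(K)$ (the total span of the Maslov grading carried by $\CFKhat$), a quadratic function of $s$ whose leading coefficient grows linearly in $q$. Matching the graded $\HFhat$'s of $S^3_{\pm 1/q}(K)$ then requires that the Maslov gap forced between intersection points at the extremal heights $s=g(K)$ and $s=g(K)-1$ be absorbable by this bounded $\delta$-variation. A direct evaluation of the quadratic gap gives $2q\,g(K)\bigl(g(K)-1\bigr)$, so the constraint reads $2q\,g(K)(g(K)-1)\le th(K)+2g(K)$, which upon dividing by $2g(K)(g(K)-1)$ is precisely the stated upper bound on $q$. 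The core technical work will be verifying that the extremal heights $g(K)$ and $g(K)-1$ really do contribute generators whose gradings cannot be accommodated by rearranging within a single $\delta$-level; establishing this rigorously should follow from the support and positivity properties of the immersed-curve framework developed in the preceding sections, but it will require care.
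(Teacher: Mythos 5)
Your overall strategy---reduce to opposite slopes via \cite{NiWu} and then compare the graded isomorphism types of $\HFhat(S^3_{r}(K))$ and $\HFhat(S^3_{-r}(K))$ through the immersed-curve pairing---is the same as the paper's, but two of your key steps do not work as described. The most concrete error is in part (ii): a straight line of slope $p/q$ in the cylinder is unbounded in height, so the slope-$2$ line meets vertical segments of $\curveset$ at \emph{every} height (at most once each, since $p/q>1$), not only those of height at most one. Matching graded ranks therefore does not directly force high components to be absent, and your route to $g(K)\le 2$ fails. Relatedly, in part (i) you have no mechanism for comparing data ``spin$^c$-structure by spin$^c$-structure'': the identification of $\spinc$ with $\Zp$ depends on the surgery description, so an unknown permutation $\sigma$ intervenes, and summing gradings over \emph{all} spin$^c$ structures is too coarse to force $p=1$ for small slopes (that total sum can balance for any $p$). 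The paper's engine here is a two-step argument you are missing: first, the inequality $n_0\ge\sum_{s\ne 0}n_s(2|s|-1)$ (itself a consequence of the grading-change formula $\Delta_{rel}(x)=1-2|A(x)|-4k(x)$) yields rank inequalities showing that $\sigma$ must preserve the subset consisting of the first $q$ (resp.\ $r$) spin$^c$ structures; second, this forces $\sum_{i=0}^{q-1}d(L(p,q),i)=0$, which contradicts the number-theoretic Lemma~\ref{lem:first-q}. Without that lemma (or a substitute), neither the exclusion of $p>1$ for small slopes nor the reduction to $p/q=2$ and $g(K)=2$ for large slopes goes through.

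For part (iii) your final inequality and the heuristic (a grading gap quadratic in $s$ and linear in $q$ versus a $\delta$-span bounded by $th(K)+2g(K)$) are correct, but the step you defer---showing the gap ``cannot be accommodated by rearranging within a single $\delta$-level''---is precisely the substantive content. The paper supplies it with two lemmas: Lemma~\ref{lem:chain}, which uses the fact that $\Delta_{rel}\le 1$ always, so a single generator with $\Delta_{rel}=-d$ forces a chain of $d$ generators with $\Delta_{rel}=+1$ occupying \emph{consecutive} relative gradings (these necessarily come from height-$0$ segments); and Lemma~\ref{lem:CFK-delta-gradings}, which pins the relative grading of each height-$0$ generator to the $\delta$-gradings of nearby generators of $\HFKhat(K)$ within an error of one. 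Note also that the extremal vertical segments sit at height $g-1$ (the height $g$ is attained only by intersections with $\mu$), and the relevant comparison is between the height-$(g-1)$ generators and the height-$0$ chain, not between heights $g$ and $g-1$.
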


Note that for any given knot we rule out all but at most two pairs of truly cosmetic surgeries; it was not previously known that a knot must have finitely many pairs of truly cosmetic surgery. Importantly, this is an \emph{effective} finiteness statement, meaning that for a given knot $K$ the two (or fewer) potential truly cosmetic surgery pairs are explicitly determined. This makes it possible to check the conjecture for a given finite set of knots by means of a finite computation (see Theorem \ref{thm:16-crossings-connected-sums} below). In fact, in the vast majority of cases observed, the value of $q$ predicted by conclusion $(i)$ is not an integer and the conditions in $(ii)$ are not met, so that Theorem \ref{thm:main} rules out all truly cosmetic surgeries on $K$.

Although the formula in conclusion $(i)$ of Theorem \ref{thm:main} determines $q$, the bound on $q$ in conclusion $(iii)$ is also useful as a convenient way to rule out truly cosmetic surgeries on large classes of knots without computing the $n_s$ invariants. In particular:

\begin{corollary}\label{cor:small-thickness}
If $K$ is a nontrivial knot in $S^3$ with $g(K) > 2$ and $th(K) < 6$, then $K$ does not admit any truly cosmetic surgeries.
\end{corollary}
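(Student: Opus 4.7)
The plan is to apply Theorem~\ref{thm:main} to a hypothetical pair of truly cosmetic surgeries on $K$ and show that both of the surgery slope cases allowed by conclusion (i) are incompatible with the hypotheses $g(K) > 2$ and $th(K) < 6$.

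First, I would dispose of the $\{\pm 2\}$ case: conclusion (ii) of Theorem~\ref{thm:main} forces $g(K) = 2$, which directly contradicts the hypothesis $g(K) > 2$. No further work is needed here.

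Second, I would handle the remaining case $\{r, r'\} = \{\pm 1/q\}$, in which $q$ is a positive integer. By conclusion (iii),
$$q \leq \frac{th(K) + 2g(K)}{2g(K)(g(K) - 1)}.$$
The key step is to verify that this upper bound is strictly less than $1$, which will contradict $q \geq 1$. Rearranging, $\frac{th + 2g}{2g(g-1)} < 1$ is equivalent to $th < 2g(g-2)$. Viewed as a function of integer $g \geq 3$, the expression $2g(g-2)$ is increasing in $g$ and attains its minimum value $6$ at $g = 3$. Consequently, the combined hypotheses $g(K) \geq 3$ and $th(K) < 6$ yield $th(K) < 6 \leq 2g(K)(g(K)-2)$, so $q < 1$, the desired contradiction.

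There is no real obstacle here: once Theorem~\ref{thm:main} is available, the corollary reduces to a single elementary inequality check. The only subtle point worth flagging is the use of $q \in \mathbb{Z}_{>0}$, which is the standard convention for writing a reduced slope of the form $\pm 1/q$ and is consistent with the established constraints on cosmetic surgery slopes.
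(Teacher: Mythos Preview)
Your proof is correct and follows essentially the same approach as the paper: rule out the $\pm 2$ case via conclusion (ii) of Theorem~\ref{thm:main} (since $g(K)>2$ contradicts $g(K)=2$), then apply the bound in conclusion (iii) together with $g(K)\ge 3$ and $th(K)<6$ to force $q<1$. Your version is in fact more explicit than the paper's, which states the argument in two sentences without writing out the inequality $th(K) < 2g(K)(g(K)-2)$.
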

\begin{proof}
The cosmetic surgery conjecture is known for genus one knots \cite{Wang}, so the assumption that $g(K) > 2$ rules out slopes $\pm 2$. The bound from Theorem \ref{thm:main} then implies that $q < 1$, which is impossible.
\end{proof}

A great many knots have $th(K) < 6$. A knot is \emph{Floer homologically thin} if $th(K) = 0$, that is if only one $\delta$-grading is occupied (we will simply refer to such knots as \emph{thin}); examples of thin knots include all alternating and quasialternating knots. Furthermore, direct computation reveals that for any prime knot $K$ with at most 16 crossings, $th(K) \le 2$. It follows that the cosmetic surgery conjecture holds for any of these knots with genus other than two.

An additional consequence of Theorem \ref{thm:main} is a restriction on the manifolds that could arise from truly cosmetic surgeries on knots in $S^3$:

\begin{corollary}\label{cor:homology1or2}
If $Y$ is a closed oriented 3-manifold with $|H_1(Y; \Z)| > 2$, then $Y$ can not be obtained by a truly cosmetic surgery on any knot in $S^3$.
\end{corollary}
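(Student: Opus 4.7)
The plan is to derive the corollary directly from part (i) of Theorem \ref{thm:main}. Suppose toward contradiction that $Y$ is obtained by a truly cosmetic surgery on some knot $K \subset S^3$, so $Y \cong S^3_r(K) \cong S^3_{r'}(K)$ for some pair of distinct slopes $r, r'$. By Theorem \ref{thm:main}(i), the pair $\{r, r'\}$ is either $\{\pm 2\}$ or $\{\pm 1/q\}$ for some positive rational $q$; in the latter case $q$ is a priori rational but, since $r$ must be a valid slope, we may write each slope in lowest terms as $p/q'$ with $p, q' \in \Z$ coprime.

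The key observation is a standard computation from the surgery presentation: for surgery on a knot in the integer homology sphere $S^3$ with slope $p/q'$ (with $\gcd(p,q') = 1$), one has $H_1(S^3_{p/q'}(K); \Z) \cong \Z/p\Z$, so $|H_1(S^3_{p/q'}(K);\Z)| = |p|$. Applying this to the two cases allowed by Theorem \ref{thm:main}(i): if $\{r,r'\} = \{\pm 2\}$ then $|H_1(Y;\Z)| = 2$, and if $\{r,r'\} = \{\pm 1/q\}$ then the numerator is $\pm 1$, so $|H_1(Y;\Z)| = 1$. In either case $|H_1(Y;\Z)| \le 2$, contradicting the hypothesis $|H_1(Y;\Z)| > 2$.

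There is no genuine obstacle here, as the argument is a one-line consequence of Theorem \ref{thm:main}(i) once one records the homology of Dehn surgery on a knot in $S^3$. The only thing worth checking carefully is that the two allowed slope families in Theorem \ref{thm:main}(i) are indeed of the form $p/q'$ with $|p| \le 2$, which is immediate from the explicit numerators $\pm 2$ and $\pm 1$.
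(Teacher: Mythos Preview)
Your argument is correct and is exactly the intended one: the paper does not even write out a separate proof for this corollary, simply calling it ``an additional consequence of Theorem~\ref{thm:main},'' since the allowed slopes $\pm 2$ and $\pm 1/q$ force $|H_1(Y;\Z)|\in\{1,2\}$ via $H_1(S^3_{p/q}(K))\cong\Z/p\Z$. One minor remark: your hedge that ``$q$ is a priori rational'' is unnecessary, since in Theorem~\ref{thm:main}(i) the slopes $\pm 1/q$ are already in lowest terms with $q$ a positive integer.
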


In addition to those in Theorem \ref{thm:main}, some further conditions must also be satisfied by the knot Floer homology of $K$ for truly cosmetic surgeries to exist. These conditions require some more definitions to state in general (see Section \ref{sec:explicit-obstructions}), but in the case of thin knots they can be stated in terms of the Alexander polynomial and signature.

\begin{theorem}\label{thm:alternating}
If a nontrivial knot $K \subset S^3$ is thin (in particular if $K$ is alternating or quasialternating) and admits a pair of truly cosmetic surgeries, then $\Delta_K(t) = nt^2 - 4nt + (6n+1) - 4nt^{-1} + nt^{-2}$ for some positive integer $n$, $\sigma(K) = 0$, and the surgery slopes are $\pm 1$ or $\pm 2$.
\end{theorem}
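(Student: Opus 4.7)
The plan is to combine Theorem \ref{thm:main} with the rigidity of thin knot Floer homology. Since $K$ is thin, $th(K) = 0$, so Corollary \ref{cor:small-thickness} gives $g(K) \le 2$, and by \cite{Wang} $g(K) \ne 1$, hence $g(K) = 2$. Substituting $th(K) = 0$ and $g(K) = 2$ into Theorem \ref{thm:main}(iii) yields $q \le 1$, so in the slope $\{\pm 1/q\}$ case one must have $q = 1$ and the slopes are $\pm 1$; the only other possibility from Theorem \ref{thm:main}(i) is $\pm 2$, which establishes the slope claim.

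Next I would pin down the Alexander polynomial. For a thin knot $\HFKhat(K)$ is completely determined by $\Delta_K(t) = \sum_i a_i t^i$ and $\sigma(K)$: the rank in Alexander grading $i$ equals $|a_i|$, the signs of the $a_i$ alternate, and every generator sits in the single $\delta$-grading $-\sigma(K)/2$. Consequently $\CFKminus(K)$ and the immersed multicurve representing it, and in particular the invariants $n_s$ of Section \ref{sec:invts-from-gamma}, become explicit expressions in $|a_0|, |a_1|, |a_2|$ (using $g(K) = 2$ so that $n_s = 0$ for $|s| \ge 3$). Substituting these into Theorem \ref{thm:main}(i) with $q = 1$, or into Theorem \ref{thm:main}(ii) with $n_0 = 2n_1$, together with the normalization $2a_2 + 2a_1 + a_0 = 1$, yields a small linear system whose positive integer solutions are parametrized by a single parameter $n$, giving $|a_2| = n$, $|a_1| = 4n$, $|a_0| = 6n+1$; restoring the alternating signs produces the claimed polynomial. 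For the signature, \cite{NiWu} showed that any knot admitting truly cosmetic surgeries satisfies $\tau(K) = 0$, and thin knots satisfy $\tau(K) = -\sigma(K)/2$, so $\sigma(K) = 0$.

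The main obstacle is the explicit computation of $n_0, n_1, n_2$ from the thin immersed curve. Once the definitions of Section \ref{sec:invts-from-gamma} are in hand, this reduces to a careful accounting of how each generator of $\HFKhat(K, i)$ contributes to intersections of the immersed curve with a horizontal line at height $s$, paying attention to the asymmetric roles of $s = 0$ and $s \ne 0$ in the formulas of Theorem \ref{thm:main}. A secondary subtlety is the slope $\pm 2$ case: the constraint $n_0 = 2n_1$ is more restrictive than $q = 1$, so one must check that it remains compatible with the same polynomial family (possibly restricting the allowable values of $n$). Beyond these points the remaining steps reduce to straightforward linear algebra and invoking established input.
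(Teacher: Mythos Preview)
Your approach is correct and very close to the paper's, though you route slightly differently. The paper invokes Proposition~\ref{prop:figure-eight-constraints-thin} (which itself rests on Theorem~\ref{thm:main} together with the finer grading Propositions~\ref{prop:figure-eight-contraints-slope1}), while you go straight through Theorem~\ref{thm:main}: both paths land on the single constraint $n_0 = 2n_1$ for a genus-two thin knot, from which the Alexander polynomial is read off. Your route is arguably cleaner for thin knots, since Theorem~\ref{thm:main}(ii) already gives $n_0 = 2n_1$ for slope $\pm 2$, and plugging $q=1$ with $n_s = 0$ for $|s|\ge 2$ into Theorem~\ref{thm:main}(i) gives the \emph{same} relation $n_0 = 2n_1$; the grading-refined Proposition~\ref{prop:figure-eight-contraints-slope1} is not needed when there is only one $\delta$-grading.

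Two small corrections. First, since the maximum $s$ with $n_s\neq 0$ is $g(K)-1$, from $g(K)=2$ you get $n_s=0$ for $|s|\ge 2$, not $|s|\ge 3$; this is what makes the formula in Theorem~\ref{thm:main}(i) collapse to $n_0=2n_1$. Second, your ``secondary subtlety'' is not one: the $\pm 2$ constraint $n_0=2n_1$ is \emph{identical} to the $q=1$ constraint, not more restrictive, so both slope cases yield the same polynomial family with no further restriction on $n$.

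The step you flag as the main obstacle---expressing $n_s$ in terms of the Alexander coefficients---is exactly where the structure theorem for thin knot Floer homology enters: for a thin knot with $\epsilon(K)=0$ the curve $\hatGamma(K)$ consists of a horizontal $\gamma_0$ together with simple figure eights, so $n_s = 2e_s$ and $|a_i| = e_{i-1}+2e_i+e_{i+1}+[i=0]$. The paper uses this same structural input (implicitly, in applying Proposition~\ref{prop:figure-eight-constraints-thin}); you should name it explicitly. With $e_1=n$ and $e_0=2n$ one gets $|a_2|=n$, $|a_1|=4n$, $|a_0|=6n+1$, matching both the alternating-sign pattern and the normalization $\Delta_K(1)=1$.
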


We remark that Theorem \ref{thm:alternating} is the best statement possible for thin knots using only Heegaard Floer homology. That is, if $K$ is thin, $\Delta_K(t) = nt^2 - 4nt + (6n+1) - 4nt^{-1} + nt^{-2}$, and $\sigma(K) = 0$, then the pairs $\{S^3_1(K), S^3_{-1}(K)\}$ and $\{S^3_2(K), S^3_{-2}(K)\}$ are not distinguished by their Heegaard Floer homology. The first two examples in the knot tables where this occurs are the knots $9_{41}$ and $9_{44}$. Similarly, Theorem \ref{thm:main} and the additional constraints in Section \ref{sec:explicit-obstructions} allow us to extract as much information as possible from Heegaard Floer homology: if we cannot prove the cosmetic surgery conjecture for a given knot, then the pairs of surgeries that are not ruled out in fact have isomorphic Heegaard Floer homology. 

Knots having any surgeries at all that cannot be distinguished by Heegaard Floer homology are exceedingly rare, but they do exist---to date we have found 337 such knots, each with two pairs of slopes that are not distinguished. All of these 337 examples are genus two and have Alexander polynomial of the form described in Theorem \ref{thm:alternating} (though not all are thin). To prove the conjecture for these knots, we must use other invariants to distinguish the remaining pairs of surgeries; for the examples found so far the hyperbolic volume and Chern-Simons invariant are sufficient. In this way we verify the conjecture for all prime knots up to 16 crossings, and in fact for arbitrary connected sums of such knots.

\begin{theorem}\label{thm:16-crossings-connected-sums} Let $K \subset S^3 $ be a nontrivial knot whose prime summands each have at most 16 crossings. If $r \neq r'$ then $S^3_r(K) \not\cong S^3_{r'}(K)$. 
\end{theorem}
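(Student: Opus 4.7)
The plan is to apply Theorem~\ref{thm:main}, together with the additional knot Floer obstructions catalogued in Section~\ref{sec:explicit-obstructions}, to every nontrivial knot whose prime summands have at most 16 crossings, and then to distinguish the finitely many residual pairs of surgeries using rigorous computation of hyperbolic invariants.

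\textbf{Prime case.} For $K$ prime with at most 16 crossings, direct inspection of the knot tables shows $th(K) \le 2$. Corollary~\ref{cor:small-thickness} then disposes of every such $K$ with $g(K) \ge 3$, and Wang's result handles $g(K) = 1$, leaving only the genus-two prime knots. For each of these, part (iii) of Theorem~\ref{thm:main} forces $q \le (2 + 4)/(4 \cdot 1) = 3/2$, so the only candidate cosmetic slopes are $\pm 1$ and, conditionally on $n_0 = 2 n_1$, $\pm 2$. I would compute $\{n_s\}$ from the knot Floer homology of each genus-two prime knot with at most 16 crossings and test the precise value of $q$ predicted by part (i), the parity condition in part (ii), and the finer constraints of Section~\ref{sec:explicit-obstructions}. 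A finite computation carried out on the knot tables reduces the list to 337 knots not ruled out by any Heegaard Floer obstruction.

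\textbf{Hyperbolic finish.} For each of these 337 exceptional knots the candidate surgery pairs are, by construction, indistinguishable by Heegaard Floer homology, so one must leave the Floer-theoretic framework. I would distinguish $S^3_r(K)$ from $S^3_{r'}(K)$ via verified computations of hyperbolic volume and, when volumes happen to coincide, of the Chern--Simons invariant; an orientation-preserving diffeomorphism must preserve both, so any certified numerical disagreement completes the argument in the prime case.

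\textbf{Connected sums.} If $K = K_1 \# \cdots \# K_n$ with $n \ge 2$ and every $K_i$ a nontrivial prime knot of at most 16 crossings, additivity of genus and thickness gives $g(K) = \sum g(K_i) \ge n$ and $th(K) = \sum th(K_i) \le 2n$. The slopes $\pm 2$ require $g(K) = 2$ by part (ii), forcing $n = 2$ with both summands of genus one; the tensor-product form of $\CFKhat$ makes the $n_0 = 2 n_1$ test explicit, and it fails in each such configuration. For slopes $\pm 1/q$, the bound in part (iii) gives $q \le (th(K) + 2g(K))/(2g(K)(g(K)-1)) \le 2/(g(K)-1)$, which is less than $1$ once $g(K) \ge 4$; the finitely many remaining low-genus configurations are dispatched by combining the tensor-product knot Floer data with parts (i)--(ii) and, if any survive, a hyperbolic cross-check as above.

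The main obstacle is the 337 residual genus-two prime knots. Here no Heegaard Floer argument in this paper can separate the candidate surgery pairs---indeed the author notes that their full Heegaard Floer homologies agree---so the proof necessarily leaves the Floer-theoretic framework and depends on rigorous numerical computation of hyperbolic invariants, which is both unavoidable and the most delicate step of the argument.
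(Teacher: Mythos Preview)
Your proposal is correct and follows essentially the same approach as the paper: reduce via Theorem~\ref{thm:main} and the Section~\ref{sec:explicit-obstructions} obstructions to 337 genus-two prime knots (checked with hyperbolic volume and Chern--Simons), and handle connected sums by combining the additivity of $g$ and $th$ with explicit tensor-product computations of the figure-eight counts $e_s$. Two small points of divergence: the paper's connected-sum argument is entirely Floer-theoretic and never needs a hyperbolic fallback (the tensor-product inequalities for $e_0, e_1, e_2$ already rule out every case), and the paper explicitly cautions that the Chern--Simons computations for the four amphichiral knots are \emph{not} verified, so your phrase ``rigorous computation'' overstates what is actually established.
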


This paper grew out of an attempt to answer the question: how much can Heegaard Floer homology tell us about the Cosmetic Surgery Conjecture? For knots in $S^3$, we have now given a comprehensive answer to that question. Indeed, we see that Heegaard Floer homology can say a great deal, and examples for which it is not sufficient to prohibit truly cosmetic surgeries appear to be very rare. Nevertheless, other tools will be required to prove Conjecture \ref{conj:cosmetic} outright. 

The results in this paper are, to the author's knowledge, the strongest obstructions available for knots in $S^3$, but similar results can be obtained using other techniques. In particular, very recently Futer, Purcell, and Schleimer have used hyperbolic methods to prove a result comparable to Theorem \ref{thm:main}: for any given hyperbolic knot, they rule out truly cosmetic surgeries on all but an explicitly determined finite set of slopes \cite{FuterPurcellSchleimer}. For knots in $S^3$ this result seems to be weaker in practice than Theorem \ref{thm:main}, in the sense that the finite set of slopes remaining is larger and thus the exhaustive search required to check the conjecture on a given set of knots is slower. However, the result in \cite{FuterPurcellSchleimer} applies not just to hyperbolic knot complements in $S^3$ but to arbitrary finite volume hyperbolic manifolds with torus boundary. In contrast, the arguments in this paper are highly specialized to knots in $S^3$ (though $S^3$ can be replaced with any integer homology sphere L-space). It is interesting to ask how much Heegaard Floer homology can tell us about cosmetic surgeries in more general manifolds. Although the proofs in this paper are not well suited to that setting, the broader principle of using immersed curves to more easily compare the ranks and relative gradings of the Heegaard Floer invariants of different Dehn fillings may be fruitful. In particular, we could hope to obtain a finiteness result in the line of Theorem \ref{thm:main} and \cite[Theorem 7.29]{FuterPurcellSchleimer}.

\begin{question}
Can Heegaard Floer homology be used to rule out all but finitely many pairs of cosmetic surgery slopes for arbitrary manifolds with torus boundary?
\end{question}

A good starting point would be graph manifolds with torus boundary, since (i) hyperbolic techniques would not apply and (ii) there is well developed machinery for understanding the bordered Floer invariants in this case \cite{Hanselman:graph, HW}.

The rest of the paper is organized as follows. In Section \ref{sec:knot-floer} we describe knot Floer homology and review its relevant properties. This section is recommended even for readers already familiar with knot Floer homology, as our description of the invariant is not the usual one. In particular we describe knot Floer homology as a decorated collections of immersed curves, a perspective that we will use throughout the paper. Section \ref{sec:survey} addresses the Cosmetic Surgery Conjecture and briefly reviews some existing results; this is not meant to be a comprehensive survey of the subject, but rather focuses on results that use Heegaard Floer homology and on which our arguments build. In Section \ref{sec:new-obstructions} we introduce our main obstructions and prove Theorem \ref{thm:main}. Section \ref{sec:explicit-obstructions} refines these results and provides several explicit obstructions to a knot admitting truly cosmetic surgeries; in particular, we prove Theorem \ref{thm:alternating}. Finally, in Section \ref{sec:computation} we verify the conjecture for arbitrary connected sums of knots up to 16 crossings, proving Theorem \ref{thm:16-crossings-connected-sums}.

{\bf Acknowledgements: } I am grateful to Dave Futer for sparking my interest in this problem and for helpful correspondence, and to Liam Watson for comments on an earlier draft of this paper. %This work was partially supported by NSF grant DMS-1812527.

%%% Section: Background on knot Floer homology
\section{Knot Floer homology}\label{sec:knot-floer}

Knot Floer homology was defined by Oszv{\'a}th and Szab{\'o} \cite{OzSz:knot-floer} and independently by J. Rasmussen \cite{Ras:knot-floer}. We will use a description of this invariant for knots in $S^3$ in terms of immersed curves; this is rather different from the original formulation, though it carries equivalent information. We will primarily be interested in a weaker form of the invariant, which we call $\hatGamma(K)$ and which is equivalent to the $UV = 0$ truncation of the knot Floer complex. The $UV = 0$ truncation of knot Floer homology is also equivalent to bordered Floer homology of the knot complement, and an immersed curve description of this invariant is due to the author, Rasmussen, and Watson \cite{HRW, HRW:companion} (the case of knot complements is discussed specifically in \cite[Section 4]{HRW:companion}). In particular, the invariant denoted $\hatGamma(K)$ in this paper agrees with $\HFhat(M)$ with $M = S^3 \setminus \nu(K)$ in the notation of \cite{HRW, HRW:companion}. For readers unfamiliar with bordered Floer homology, a bordered free construction of the immersed curves $\hatGamma(K)$ will appear in a forthcoming paper by the author \cite{Hanselman:CFK}. This construction has the advantage that it can be strengthened to a decorated curve $\Gamma(K)$ capturing the full knot Floer complex $\CFKinfty(K)$. We will not need this stronger invariant in the present paper, though we will need to make use of the construction in \cite{Hanselman:CFK} in one small way (see Proposition \ref{prop:d-invariant}).

We will now describe the invariant $\hatGamma(K)$. Throughout we work with coefficients in $\F = \Ztwo$. We begin by setting notation for the spaces in which the curves $\hatGamma(K)$ appear. Let $\torus$ denote the torus marked with a chosen pair of parametrizing curves $\mu$ and $\lambda$ and a single marked point $w$, which we may take to be the intersection of $\mu$ and $\lambda$. Let $\cylinder$ denote the infinite cyclic covering space of $\torus$ in which $\lambda$ lifts to a loop and $\mu$ does not, and let $\bar{p}:\cylinder\to\torus$ denote the covering map. We will identify $\cylinder$ with $(\R / \Z) \times \R$, where the lifts of $\lambda$ and $\mu$ are horizontal and vertical, respectively, and the preimages of $w$ are the points $(0,s-1/2)$ for integers $s$.  Let $\plane$ denote the universal covering space $\R^2$ with covering map $\widetilde{p}:\plane\to\cylinder$. By slight abuse, we will often refer to the vertical line through the marked points in $\cylinder$ (or through a column of marked points in $\plane$) as $\mu$, though it is really a lift of the curve $\mu$ in $\torus$. Finally, we will use $\ptorus$, $\pcylinder$, and $\pplane$ to denote corresponding punctured surfaces obtained by removing the marked points. We may conflate punctures and marked points at times, the only distinction is that we use marked points if we want to allow disks to cover these points and we use punctures otherwise.

\subsection{ The knot Floer invariant.}

To a knot $K$ in $S^3$, we associate a collection $\curveset = \{\gamma_0, \ldots, \gamma_n\}$ of oriented immersed curves in  $\pcylinder$; $\curveset$ is an invariant knot $K$, up to regular homotopy and reindexing of the curves (note that we work in the punctured cylinder $\pcylinder$ rather than marked cylinder $\cylinder$, meaning that the curves and homotopies are required to avoid the punctures). $\hatGamma(K)$ denotes this multicurve along with two extra decorations: (1) each curve may be decorated with a local system, and (2) the multi-curve carries decorations to encode relative grading data. We remark that the first decoration is not relevant to the arguments in this paper, though we describe it here for completeness. Moreover, it is still unknown whether the local system decoration is nontrivial for any knot in $S^3$. In contrast, the second type of decoration will play a crucial role. This Maslov grading decoration for immersed curves in the context of bordered Floer homology is discussed in detail in \cite[Section 2]{HRW:companion}; see also \cite{Hanselman:CFK} for the special case of $\hatGamma(K)$.

\noindent \emph{Local systems}: If a curve $\gamma_i$ is homologous in $\pcylinder$ to $k_i$ copies of some primitive curve $\gamma_i'$, then we will assume that $\gamma_i$ is realized by $k_i$ parallel copies of $\gamma_i'$ outside of a small region, in which the curve crosses itself $k_i-1$ times as shown in Figure \ref{fig:local-system}(a). This region contains one segment of $\gamma_i$ (the negatively sloped segment in the figure) that intersects each of $k_i-1$ other segments. Then each curve $\gamma_i$ is decorated with a subset of these $k_i-1$ self intersection points. Note that for a primitive curve this decoration is automatically trivial. The selected intersection points should be interpreted as places where a traveler along the negatively sloped segment is allowed to make a left turn onto one of the other segments. In the language of \cite{HRW}, this means that we extend the curve $\gamma_i$ to an immersed train track and add two (oriented) edges near the selected intersection point, as shown in Figure \ref{fig:local-system}(b). Decorating a chosen subsection of intersection points by adding train track edges in this way determines a $k_i \times k_i$ invertible matrix with coefficients in $\F$ that counts immersed paths from the left to the right of the boxed region. By a local system we mean a similarity class of such matrices, which is equivalent to the subset of intersection points above since the matrix constructed in this way will be in rational canonical form. The local system associated to each curve in $\curveset$ is also an invariant of $K$. Note that the pair of train track edges added at an intersection point is equivalent to a single ``crossover arrow", in the shorthand notation of \cite{HRW}, and using the arrow sliding moves the whole configuration can be replaced with $k_i$ parallel copies of $\gamma_i'$ with some crossover arrows between parallel strands. This also defines a matrix, which is similar to the one constructed above.

\begin{figure}
\labellist
  \pinlabel {$\gamma_0$} at 125 22
  \pinlabel {$(a)$} at 60 -10
         \endlabellist
\includegraphics[scale=1]{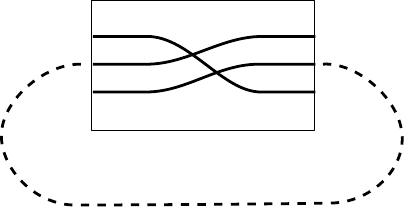} \hspace{3 cm}
\raisebox{5 mm}{\labellist
  \pinlabel {$=$} at 62 15
  \pinlabel {$(b)$} at 62 -22
         \endlabellist\includegraphics[scale=1]{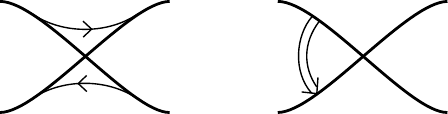}}

\vspace{5 mm}

\caption{We assume that non-primative curves are arranged as shown in $(a)$ and decorated with a subset of the intersection points in the boxed region. To each chosen intersection point we add a pair of edges as pictured in $(b)$, or equivalently a crossover arrow in the notation of \cite{HRW}. Counting (smooth) paths from the left side of the boxed region to the right determines a matrix, which can be interpreted as a local system.}
\label{fig:local-system}
\end{figure}

\noindent \emph{Maslov Gradings}: The multicurve $\curveset$ can be enhanced with extra decorations in order to encode some (relative) grading information. In fact, the desired information is already contained in the immersed curve for any single component $\gamma_i$ of $\curveset$, so extra decorations are only required to capture relative gradings between different components. There are multiple ways to encode this information, the approach we describe here is to add labeled arcs to $\curveset$ connecting different components. More precisely, we extend the multicurve $\curveset$ to an immersed graph $\curveset_{\gr}$, which contains $\curveset$ as a subgraph and all of whose vertices are contained in $\curveset$, but which also contains some number of edges connecting vertices on different curve components. We will refer to these new edges as \emph{grading edges}, and they should be ignored expect for the purposes of computing gradings. We require that the grading edges are tangent to $\curveset$ at their endpoints, so that $\curveset_{\gr}$ is in fact an immersed train track (recall that a train track is a graph for which all incident edges at any vertex are mutually tangent). Moreover, we require the ends of the grading edges to be consistent with the relative orientations on the curves, in the sense that a smooth path that runs over an edge connecting $\gamma_i$ and $\gamma_j$ either follows the orientation on both curves or opposes the orientation on both curves. Grading edges themselves are directed (this direction is not required to agree with the orientation on the curves) and labeled with an integer weight.

We say that a set of grading edges on $\curveset$ is \emph{complete} if $\curveset_{\gr}$ is connected as a graph. We say that a set of grading edges is \emph{consistent} if, for any closed (not necessarily smooth) path $P$ in $\curveset_{\gr}$, 
\begin{equation}\label{eq:maslov-consistency}
- \text{rotation}(P) + \text{winding}(P) + \text{weights}(P) = 0,
\end{equation}
where rotation$(P)$ is $\tfrac{1}{2\pi}$ times the total counterclockwise rotation along the smooth sections of $P$, winding$(P)$ is the sum over marked points $w$ in $\cylinder$ of the winding number of $P$ around $w$, and weights$(P)$ is the sum of the weights of all grading edges traversed by $P$, with the weight counted negatively if $P$ traverses the grading edge backwards. More precisely, for the rotation and winding numbers to make sense, we only consider paths $P$ that do not wrap around the cylinder; these can be viewed as paths in the marked strip obtained by cutting $\cylinder$ open along the line $\{\tfrac 1 2\}\times\R$. We say that two complete sets of grading edges are \emph{equivalent} if their union is consistent. With these definitions established, the grading decoration we will use on the multicurve $\curveset$ to define $\hatGamma(K)$ is a complete consistent set of grading edges; this decoration is an invariant of $K$ up to equivalence of sets of grading edges.

\parpic[r]{
 \begin{minipage}{50mm}
 \centering
 \includegraphics[scale=1]{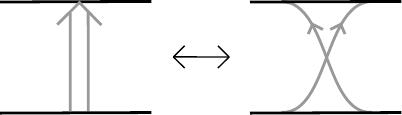}
 \captionof{figure}{A crossover arrow is shorthand for a pair of edges.}
 \label{fig:crossover-convention}
  \end{minipage}
  }
For any complete set of grading edges and any additional oriented grading edge, there is a unique integer weight on the new edge that makes the combined set consistent. In particular, for any oriented edge connecting two components of $\curveset$, tangent to $\curveset$ at each end in a way that is consistent with the orientation of the curves, we can choose a set of grading edges that contains this edge with some integer weight. If $\curveset$ contains $n+1$ curves, then a minimal complete subset of any complete set of grading edges contains $n$ edges. We will generally choose a minimal complete set of grading edges, except that the convention established in \cite{HRW:companion} replaces each grading edge with a pair of edges as in Figure \ref{fig:crossover-convention}, the shorthand for which is a bold arrow. We will call these pairs of grading edges \emph{grading arrows}, and we will decorate $\curveset$ with a minimal consistent set of grading arrows. Using both edges in a grading arrow is unnecessary but can be convenient. Note that a grading arrow can be labeled by a single weight, since consistency requires both edges in the pair to have the same weight. For examples of multicurves decorated with grading arrows, see Figure \ref{fig:curve-examples}.

\begin{remark} Grading arrows are described in detail in \cite[Section 2]{HRW:companion} (see also \cite{Hanselman:CFK} for a slightly different description). Note that in \cite{HRW:companion}, grading arrows do not carry an integer weight; these arrows should be interpreted as having weight zero. By sliding arrows over punctures and changing the corresponding weights by one, it is clear that any configuration of weighted arrows can be replaced with an equivalent configuration of arrows that all have weight zero. Thus using weighted grading arrows is not necessary, but it is convenient as it provides greater freedom in which arrows we choose. Finally note that the arrow weights discussed here should not be confused with the complexity weights on crossover arrows used in the proof of the arrow removal algorithm in \cite[Section 3.7]{HRW}
\end{remark}

\subsection{Properties of $\hatGamma$} \label{sec:properties} Some examples of the invariant $\hatGamma(K)$ are shown in Figure \ref{fig:curve-examples}. For the unknot and the right hand trefoil, the invariant contains a single curve. The invariant for the figure eight knot consists of two curves decorated with a single grading arrow connecting them, while the invariant for $9_{44}$ has five curves and four grading arrows. These examples demonstrate some general properties of $\hatGamma$, which we now highlight.

(I) For a knot $K$ in $S^3$, the multicurve $\curveset$ associated with $\hatGamma$ can be homotoped to have exactly one intersection with the line $\{\tfrac 1 2\}\times\R$ in $\cylinder \simeq (\R/\Z)\times \R$. This follows from the fact that when $\hat\Gamma$ and this vertical line are in minimal position, their intersections generate $\HFhat$ of the meridional filling, that is, of $\HFhat(S^3) \cong \F$ \cite[Theorem 2]{HRW}. This condition implies that there is one distinguished curve component, which we always take to be $\gamma_0$, that wraps around the cylinder once, and that all other components can be contained in a neighborhood of $\mu$, the vertical line through the marked points. As in the examples in Figure \ref{fig:curve-examples}, $\gamma_0$ is always oriented left-to-right.

(II) The immersed curves in $\curveset$ are \emph{unobstructed}, meaning that they do not bound any ``teardrops," or one-cusped disks, that do not enclose a marked point. That is, there are no immersed disks in the punctured cylinder $\pcylinder$ with boundary on some $\gamma_i \in \curveset$ such that the boundary is a smooth path apart from one acute corner at a self intersection point of $\gamma_i$.

(III) The consistency condition for sets of grading arrows in \eqref{eq:maslov-consistency} is stated as a condition that must hold for all closed loops in the train track $\curveset_{\gr}$ that do not wrap around the puncture. We remark that this condition must in particular hold for each curve component $\gamma_i$ with $i \neq 0$, even before grading arrows are introduced, and this places restrictions on the allowed curves: for any closed curve with net zero rotation, the total winding number around punctures must also be zero. In particular, any figure eight shaped curve must enclose the same number of marked points on each side.

\begin{remark}\label{rmk:only-figure-eights} In the examples above, all of the curves $\gamma_i$ with $i\neq 0$ are figure eights wrapping around two adjacent punctures; such a curve will be called a \emph{simple figure eight}. This is not a general property of $\hatGamma$, but it is incredibly common. In fact, this condition holds for all but one prime knot up to 15 crossings. The unique exception is $15n166130$, for which $\hatGamma$ contains (along with thirty simple figure eights) two components not of this form. These components are still figure eight curves enclosing one marked point on each side, but they enclose non-adjacent marked points. Larger examples can be constructed with figure eight curves enclosing more than one marked point on each side, but the author has not yet found an example with a homologically trivial curve that is not a figure eight in this more general sense.
\end{remark}

(IV) The decorated curve set $\hatGamma(K)$ is invariant under rotation by $\pi$ about the origin, up to homotopy of curves and equivalence of grading arrows, except that the rotation flips the orientation of every curve. This is the geometric expression of a symmetry for bordered Floer invariants established in \cite[Theorem 7]{HRW:companion}, which was proved earlier in the case of knot complements by Xiu \cite{Xiu}.

\begin{figure}
\includegraphics[scale=1]{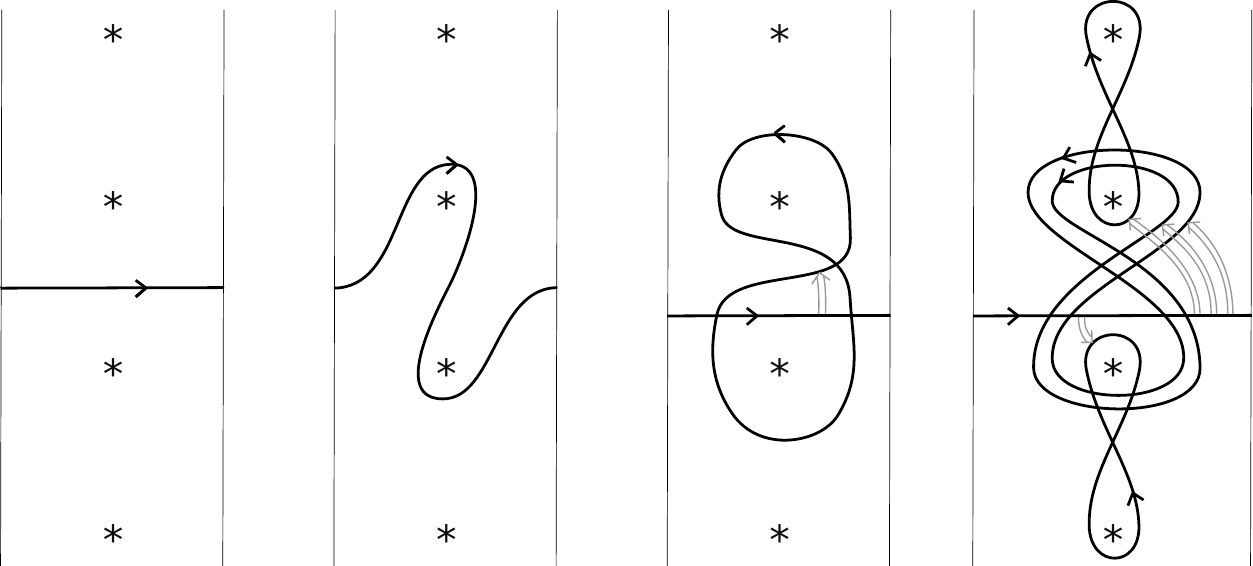}
\caption{The curve invariants $\hatGamma(K)$ for, from left to right, the unknot, the right-handed trefoil, the figure eight knot, and $9_{44}$. Grading arrows, shown in gray, all have weight 0, and there are no local systems.}
\label{fig:curve-examples}
\end{figure}

\subsection{Invariants derived from knot Floer homology}\label{sec:invts-from-gamma}
 
Several interesting numerical invariants of $K$ can be extracted from $\hatGamma$. For example, the genus of $K$ is the maximum height of an intersection of $\hatGamma$ with the vertical line $\mu$ through the marked points, assuming $\hatGamma$ is in minimal position with $\mu$. Here we mean height in the discrete sense: an intersection point is said to occur at height $s$ if its $y$-coordinate falls between the marked points at $(0,s-\tfrac 1 2)$ and $(0,s+\tfrac 1 2)$.

There is a distinguished curve component $\gamma_0$ that wraps around the cylinder $\cylinder$ and a distinguished intersection of $\gamma_0$ with $\mu$, the first time $\gamma_0$ reaches $\mu$ after wrapping around the cylinder. The Ozsv{\'a}th-Szab{\'o} $\tau$ invariant is the height of this first intersection point on $\gamma_0$ (see \cite[Section 4.2]{HRW:companion}; this intersection corresponds to the generator of vertical homology and the height gives its Alexander grading). Moreover, after $\gamma_0$ reaches this first intersection point, it can do one of three things: turn right (downward), turn left (upward), or continue straight. Hom's invariant $\epsilon(K)$ is $1$, $-1$, or $0$, respectively, in these three cases (again see \cite[Section 4.2]{HRW:companion}; this behavior corresponds to the generator of vertical homology lying at the end of a horizontal arrow, the beginning of a horizontal arrow, or no horizontal arrow). Note that by symmetry $\gamma_0$ can only continue straight if the intersection was at height 0, so $\epsilon(K) = 0$ implies $\tau(K) = 0$. In this case $\gamma_0$ intersects $\mu$ only once and is homotopic to the simple horizontal line $S^1 \times \{0\}$ in the cylinder. $\tau$ and $\epsilon$ are both concordance invariants of $K$; in fact, it can be shown that the distinguished curve component $\gamma_0$ is itself a concordance invariant of $K$, up to homotopy of curves (this follows from \cite[Theorem 1]{Hom:survey}; $\gamma_0$ is the immersed curve representative of the direct summand of $UV = 0$ knot Floer homology supporting the homology, which is a concordance invariant). In fact the curve $\gamma_0$ exactly encodes the $\epsilon$-equivalence class as defined by Hom. Note that $\tau$ and $\epsilon$ depend only on $\gamma_0$.

\parpic[r]{
 \begin{minipage}{30mm}
 \centering
 \includegraphics[scale=1.4]{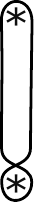}
 \captionof{figure}{A simple figure eight curve contributes two vertical segments.}
 \label{fig:pulled-tight-fig8}
 \vspace{2 mm}
  \end{minipage}
  }
  In the arguments in this paper, it will be useful to quantify a few more aspects of the underlying multicurve $\curveset$ for $\hatGamma$. To do this, we will assume that $\curveset$ has a convenient form. We start by ``pulling tight" as described in \cite{HRW}. This means that we assume $\curveset$ is the minimal length representative of its homotopy class, subject to the constraint that it avoids an open disk of some small radius $\epsilon$ around each marked point. Intuitively, we think of there being a peg of radius $\epsilon$ at each marked point and think of the curve as a rope winding through the pegs; we pull the curve taut as if the rope were elastic\footnote{To ensure transverse self intersection (and make pictures easier to read), we can modify this by letting the curve wrap at a slightly different radius each time it encounters a peg}. Under this assumption, $\curveset$ breaks into segments that connect pegs (separated by small portions of $\curveset$ lying on the boundary of a peg). Because of property (I) from the previous section, exactly one of these segments leaves a neighborhood of $\mu$ and wraps around the cylinder, while all other segments connect a peg to one directly above it and thus are (roughly) vertical of length one. We are interested in counting these vertical segments. We say that a vertical segment is at height $s$ if it connects a peg at height $s - \tfrac 1 2$ to a peg at height $s + \tfrac 1 2$; let $n_s$ denote the number of vertical segments at height $s$ and let $n = \sum_{s\in\Z} n_s$ denote the total number of vertical segments. In general we will perturb the curve slightly from its pulled tight position so that the segments counted by the $n_s$ are in fact vertical outside of a neighborhood of the pegs; Figure \ref{fig:pulled-tight-fig8} shows a simple figure eight curve in this position. Note that a simple figure eight curve always contributes two vertical segments at the same height. The unique non-vertical segment in $\curveset$ is also of interest, and we will record the slope $m$ of this segment. For example, consider the curve invariant for the knot $9_{44}$ shown in Figure \ref{fig:curve-examples}. Each figure eight curve contributes two vertical segments, and we have $m = 0$, $n_0 = 4$, $n_1 = n_{-1} = 2$, and $n_s = 0$ for all other $s$. Since they are derived from $\hatGamma$, the quantities $m$ and $n_s$ are invariants of the knot, though we remark that they are not fundamentally new and can be described in terms of other invariants (for example, it can be shown that $m = 2\tau(K) - \epsilon(K)$).

\subsection{$\hatGamma(K)$ and bifiltered complexes.}\label{sec:bifiltered-complexes}

Readers who are already familiar with knot Floer homology will notice that the object defined above bears little resemblance to the original formulation of the invariant, which takes the form of a $\Z$-graded, $\Z\oplus\Z$-filtered chain complex $\CFKminus(K)$ defined up to filtered chain homotopy equivalence. To reassure these readers that the two invariants are in fact equivalent, we pause to briefly describe how the knot Floer complex can be recovered from $\hatGamma(K)$. More accurately, we recover the so-called $UV = 0$ quotient of this complex, which records only the horizontal and vertical differentials in $\CFKminus(K)$; the stronger invariant $\Gamma(K)$ described in \cite{Hanselman:CFK}, which is $\hatGamma(K)$ equipped with some extra decoration, would be required recover the full knot Floer complex.

Let $\hatGamma = \hatGamma(K)$ be represented by the immersed multicurve $\curveset$ along with decorations as described above, and let $\mu$ denote the vertical line through the marked points in $\cylinder$. We construct a complex $C_{\hatGamma}$ over $\F[U,V]/(UV=0)$ whose generators are the intersection points of $\curveset$ with $\mu$, and whose differential counts immersed bigons for which the left boundary lies on $\mu$ and the right boundary is a path in $\curveset$. More precisely, for intersection points $x$ and $y$ in $\curveset \cap \mu$, a bigon from $x$ to $y$ is a homotopy classes of maps $f: D^2 \to \cylinder$ such that $f(-i) = x$, $f(i) = y)$, the negative real part of $\partial D^2$ maps to $\mu$, the positive real part of $\partial D^2$ maps to $\curveset$, $f$ is an immersion away from $i, -i$, and $f(\partial D^2)$ forms acute corners at $x$ and $y$. Let $N(x,y)$ denote the mod 2 count of such bigons. We are interested in recording how these bigons cover certain marked points. The marked points of $\cylinder$ all lie on the line $\mu$; we will push each of these points $w$ slightly off of $\mu$ to the right, and add a new marked point $z$ next to each just to the left of $\mu$. For any homotopy class $\phi$ of maps as above, we define $n_z(\phi)$ and $n_w(\phi)$ to be the multiplicity with which a representative of $\phi$ covers the $z$'s and $w$'s, respectively. The differential then is given by
$$\partial(x) = \sum_y N(x,y) U^{n_w(x,y)} V^{n_z(x,y)} y.$$
Since we set $UV = 0$ in our coefficient ring, the differential only needs to count bigons that cover either $w$ marked points or $z$ marked points, but not both. We note that to recover the full knot Floer complex, we would need to count bigons covering both types of marked points and  we would not set $UV = 0$. However, if we attempt to construct such a complex using only $\hatGamma$, $\partial^2$ may not be zero. To correctly recover the knot Floer complex, we need to take into account some extra decorations in the stronger invariant $\Gamma(K)$ (see \cite{Hanselman:CFK}).

We set an Alexander grading on the generators of $C_{\hatGamma}$, which are intersection points between $\curveset$ and $\mu$, by their height: for $x \in \curveset\cap\mu$, we define $A(x) \in \Z$ to be $s$ if $x$ lies between the marked points at $(0,s-\tfrac 1 2)$ and $(0,s+\tfrac 1 2)$. It is clear that a bigon from $x$ to $y$ covers $k$ marked points of type $w$, then $A(y) = A(x) + k$, and if it covers $k$ marked points of type $z$ then $A(y) = A(x) - k$; thus if $\partial(x)$ contains a term $U^a V^b y$ where one of $a$ or $b$ vanishes, then $A(y) = A(x) - b + a$.

 In addition to the Alexander grading, $C_{\hatGamma}$ carries an integer Maslov grading $M$. This satisfies
 \begin{equation}\label{eq:maslov-shift}
 M(Ux) = M(x) -2, \qquad M(Vx) = M(x), \quad \text{and} \quad M(\partial x) = M(x) -1.
 \end{equation}
These relationships determine $M$ as a relative grading on each connected component of $C_{\hatGamma}$, since if $U^a V^b y$ appears in $\partial x$ then $M(x) - M(y) = 1 - 2a$. The connected components of $C_{\hatGamma}$ correspond directly to the component immersed curves in $\hatGamma$. $M$ can be extended to a relative grading on all of $C_{\hatGamma}$ by considering bigons between $\mu$ and the train track $\curveset_{\gr}$ obtained by including grading arrows with $\curveset$; we require that \eqref{eq:maslov-shift} still holds for these bigons, where running over a grading edge of weight $k$ forward (resp. backward) counts as covering both $U$ and $V$ $k$ times (resp. $-k$ times). That is, if there is a bigon from $x$ to $y$ whose left boundary lies in $\mu$ and whose right boundary is a smooth path in $\curveset_{\gr}$ that covers $w$ marked points $n_w$ times and $z$ marked points $n_z$ times, and for which $k$ is the sum of the weights (counted with sign) of all grading edges traversed on the boundary of the bigon traveling from $x$ to $y$, then 
\begin{equation}\label{eq:maslov-shift2}
M(y) - M(x) = -1 + 2n_w + 2k.
\end{equation}
We can always assume that all grading arrows in $\hatGamma$ lie completely to the right of $\mu$ or completely to the left of $\mu$, so to determine the relative Maslov grading it is sufficient to consider bigons that cover only $w$'s or only $z$'s and that include at most one grading arrow. That said, \eqref{eq:maslov-shift2} applies for bigons covering both types of marked points, and can in fact be generalized to the following formula for the grading difference between any two generators:
\begin{definition}\label{def:maslov-grading-difference}
For $x, y \in \curveset\cap\mu$, let $P_1$ be a path (not necessarily smooth) from $x$ to $y$ in $\curveset_{\gr}$, let $P_2$ be a path from $y$ to $x$ in $\mu$, and let $P$ be the concatenated path $P_1 P_2$. $P$ is a closed path that is smooth apart from right corners at $x$ and $y$ and possibly one or more cusps. Let $\text{rotation}(P)$ denote $\tfrac{1}{2\pi}$ times the total counterclockwise rotation along the smooth sections of $P$, let $\text{winding}_w(P)$ denote the net winding number of $P$ around $w$ marked points, and let $\text{weights}(P)$ be the sum of weights (counted with sign) of all grading edges traversed by $P$. Then
\begin{equation}\label{eq:maslov-difference}
M(y) - M(x) = - 2\cdot \text{rotation}(P) +  2\cdot\text{winding}_w(P) + 2\cdot \text{weights}(P).
\end{equation}
\end{definition}

Note that the mod 2 reduction of the (relative) Maslov grading is determined by the sign of the intersection points in $\curveset\cap\mu$. It is clear that the completeness condition on sets of grading arrows ensures that this relative grading is defined for all generators, and the consistency condition ensures that the relative grading is well defined.  Finally, this relative grading can be promoted to an absolute grading by noting that there is a special generator of $C_{\hatGamma}$, the first intersection of $\gamma_0$ with $\mu$ after $\gamma_0$ wraps around the cylinder; we set the Maslov grading of this generator to be 0.

As originally defined, the knot Floer complex $\CFKinfty(K)$ is a $\Z$-graded, $\Z\oplus\Z$-filtered chain complex finitely generated over $\F[U,U^{-1}]$. The first filtration is given by negative powers of $U$, while the second, the Alexander filtration, is recorded separately. The Alexander filtration is determined by an Alexander grading on the generators, together with the fact that multiplication by $U$ lowers the filtration level by one. It is convenient to add a second formal variable $V$ to keep track of the Alexander filtration, giving rise to a complex generated (with the same generating set) over $\F[U^{\pm 1}, V^{\pm 1}]$, with the two filtrations given by negative powers of $U$ and $V$, respectively. The original definition is then recovered from this by setting $V = 1$, though the new complex is bigger as many powers of $V$ can be attached to the same element of $\CFKinfty(K)$. To get a complex over $\F[U^{\pm 1}, V^{\pm 1}]$ that is isomorphic to $\CFKinfty(K)$, we would consider only elements $U^a V^b x$ such that $a - b = A(x)$; then a generator $x$ of $\CFKinfty(K)$ would correspond to the element $V^{-A(x)} x$, which is at Alexander filtration level $A(x)$. Terms in the differential that fix the algebraic filtration level (resp. the Alexander filtration level) are referred to as vertical arrows (resp. horizontal arrows); setting $UV = 0$ amounts to counting only horizontal and vertical arrows. $C_{\hatGamma}$ as defined above recovers this $UV = 0$ quotient complex.

\parpic[r]{
 \begin{minipage}{30mm}
 \centering
 \labellist
  \pinlabel {$a$} at 29 76
  \pinlabel {$b$} at 23 48
  \pinlabel {$c$} at 23 37
  \pinlabel {$d$} at 23 26
  \pinlabel {$e$} at 28 4
  \pinlabel {$z$} at 20 63
  \pinlabel {$w$} at 31 63
  \pinlabel {$z$} at 20 15
  \pinlabel {$w$} at 31 15
         \endlabellist
 \includegraphics[scale=1.4]{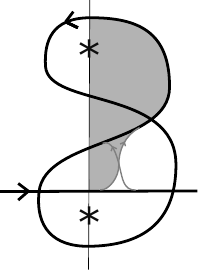}
 \captionof{figure}{}
 \label{fig:fig8-grading}
 \vspace{2 mm}
  \end{minipage}
  }
\begin{example}
Consider the figure eight knot, whose invariant $\hatGamma$ is shown in Figure \ref{fig:curve-examples}. To recover the knot Floer complex, we would draw the vertical line $\mu$ through the marked points, placing basepoints $z$ and $w$ to the left and right of each marked point, and notice that there are 5 intersections with $\mu$; label these $a$, $b$, $c$, $d$, $e$ from top to bottom as in Figure \ref{fig:fig8-grading}. The Alexander grading is 1 for $a$, 0 for $b$, $c$, and $d$, and $-1$ for $e$. There are two bigons to the left of $\mu$ contributing $Vb$ to $\partial a$ and $Ve$ to $\partial c$, and there are two bigons to the right of $\mu$ contributing $Ua$ to $\partial c$ and $Ub$ to $\partial e$. The distinguished generator with Maslov grading 0 is $d$. When we include the grading arrow, there is a bigon on the right of $\mu$ from $d$ to $a$ that covers the marked point $w$ once and whose boundary runs over the grading arrow (see Figure \ref{fig:fig8-grading}; note that for clarity we have drawn the pair train track edges that the grading crossover arrow represents, the boundary runs over one of these). The crossover arrow has has weight 0, so this bigon implies that $M(a) = 1$. The bigons mentioned previously imply that $M(b)=M(c) = 0$ and $M(e)=-1$.
\end{example}

\begin{remark}
The discussion above shows that it is fairly straight forward to construct a $UV = 0$ bifiltered complex from a decorated set of immersed curves. The converse, that any $UV = 0$ bifiltered complex can be represented by a decorated set of immersed curves and that this representation is unique in an appropriate sense, is more difficult. This follows from the main Theorem in \cite{HRW}, which proves a related result for type D structures, since the $UV = 0$ quotient of $\CFKinfty(K)$ is equivalent to the bordered Floer invariant of the knot complement $\CFD(S^3 \setminus \nu(K))$.  See also \cite{Hanselman:CFK} for a proof that does not pass through bordered Floer homology. $\hatGamma(K)$ is defined to be the decorated immersed curve that represents the $UV = 0$ quotient of $\CFKinfty(K)$.
\end{remark}

\subsection{Surgery formula}

A key strength of the knot Floer homology package is that there is a simple way to recover the Heegaard Floer homology of any Dehn surgery on a knot $K$. In particular, $\HFhat(S^3_{p/q}(K))$ can be realized as the intersection Floer homology of the decorated immersed curve $\hatGamma(K)$ with lines of slope $p/q$ in the punctured torus $\ptorus$ or the punctured cylinder $\pcylinder$, as we will now further explain (a precise statement is given in Theorem \ref{thm:surgery-formula}).

More precisely, let $\bar p(\hatGamma)$ be the projection of $\hatGamma(K)$ to $\ptorus$; we will see that $\HFhat(S^3_{p/q}(K))$ agrees with intersection Floer homology of $\bar p(\hatGamma)$ with a straight line $\ell_{p,q}$ of slope $p/q$. By this we mean the homology of a chain complex $CF(\bar p(\hatGamma),\ell_{p,q})$ generated by intersection points whose differential counts immersed bigons with right boundary on $\bar p(\hatGamma)$ and left boundary on $\ell_{p,q}$. We do not allow bigons to cover the marked point (we indicate this by taking Floer homology in the punctured torus $\ptorus$ rather than the marked torus $\torus$). We count bigons whose boundary includes crossover arrows associated with the local system decoration on $\hatGamma$ (see Figure \ref{fig:local-system}(b)), though it turns out that including these bigons in the differential has no effect on the resulting homology, so in practice the local systems on $\hatGamma$ can be ignored. In contrast, we do not count bigons whose boundary runs over a grading arrow so the Maslov decoration has no effect on the differential, but it will be used to define gradings on the resulting complex.

There are two types of grading information on intersection Floer homology. First, $CF(\bar p(\hatGamma), \ell_{p,q})$ decomposes into spin$^c$ summands, where generators $x$ and $y$ are in the same summand if and only if the loop $P$ formed by concatenating a path from $y$ to $x$ in $\ell_{p,q}$ followed with a (not necessarily smooth) path from $x$ to $y$ in $\bar p(\curveset_{\gr})$ (that is, in $\bar p(\hatGamma)$ with grading arrows included) is nullhomologous. This decomposition is easier to understand by lifting to the covering space $\pcylinder$, where we take Floer homology of $\hatGamma$ with lifts of $\ell_{p,q}$; to recover the same complex we must use multiple different lifts of $\ell_{p,q}$, and the spin$^c$ summands are precisely the Floer homology of $\hatGamma$ with any one lift of $\ell_{p,q}$. On each spin$^c$ summand there is also a (relative) Maslov grading, where the grading difference $M(x) - M(y)$ is defined exactly as in Definition \ref{def:maslov-grading-difference}. The general form of the grading difference can be cumbersome, but in practice it is sufficient to consider bigons that involve at most one grading arrow, possibly with a cusp at one end of the grading arrow.

\begin{lemma}\label{lem:maslov-easy-bigons}
Suppose $x$ to $y$ are connected by an immersed region bounded between $\curveset_{\gr}$ and a lift of $\ell_{p,q}$ that is 
\begin{itemize}
\item[$(a)$] a bigon not involving a grading arrow, 
\item[$(b)$] a bigon whose $\curveset_{\gr}$ boundary is a smooth path traversing one grading arrow of weight $m$, or 
\item[$(c)$] a cusped bigon whose $\curveset_{\gr}$ boundary traverses one grading arrow of weight $m$ with a single left turning cusp,
\end{itemize}
as pictured in Figure \ref{fig:grading-bigons}. Suppose in any case that the region covers $k$ marked points (counted with multiplicity). Then the Maslov grading difference $M(y) - M(x)$ is given by $-1 + 2k$ in case $(a)$, $-1+2k+2m$ in case $(b)$, or $2k + 2m$ in case $(c)$
\end{lemma}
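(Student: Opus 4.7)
The plan is to apply the general Maslov grading difference formula from Definition \ref{def:maslov-grading-difference} to each of the three configurations. In each case one forms the closed loop $P$ by concatenating the $\curveset_{\gr}$-boundary of the region (traversed from $x$ to $y$) with the $\ell_{p,q}$-boundary in reverse (traversed from $y$ back to $x$); the task reduces to evaluating $\text{rotation}(P)$, $\text{winding}_w(P)$, and $\text{weights}(P)$ and substituting into \eqref{eq:maslov-difference}.

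For case (a), $P$ is a standard immersed bigon whose only non-smooth points are the two acute right corners at $x$ and $y$. A direct turning-number calculation gives $\text{rotation}(P) = 1/2$: the total topological turning around the boundary of a disk equals $1$, and the two right corners absorb $1/2$ of this (each acute corner contributes an exterior angle of order $\pi/2$). The winding around $w$-marked points equals $k$ by hypothesis, and there are no grading arrows, so $\text{weights}(P) = 0$. Substituting gives $M(y) - M(x) = -1 + 2k$.

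Case (b) has identical geometry to case (a) except that the $\curveset_{\gr}$-boundary now smoothly passes through a single grading arrow of weight $m$. Since each grading arrow is required to be tangent to $\curveset$ at both endpoints, running over the arrow preserves the smoothness of the boundary and leaves both $\text{rotation}(P) = 1/2$ and $\text{winding}_w(P) = k$ unchanged; only $\text{weights}(P) = m$ is new. Substituting yields $M(y) - M(x) = -1 + 2k + 2m$.

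Case (c) introduces one extra non-smooth point: a left-turning cusp. A cusp corresponds to an exterior angle of $\pi$, i.e., a topological turning contribution of $\pm 1/2$; the left-turning hypothesis pins the sign to $+1/2$ (counterclockwise). Since the total topological turning of the disk boundary is still $1$ and the two right corners still absorb $1/2$, the smooth rotation must drop to $\text{rotation}(P) = 0$, while $\text{winding}_w(P) = k$ and $\text{weights}(P) = m$ are unchanged from case (b). The formula then gives $M(y) - M(x) = 2k + 2m$. The principal subtlety is the sign of the cusp contribution: one must verify carefully that the convention ``left-turning'' coincides with counterclockwise turning relative to the chosen orientation on the disk boundary, so that the smooth rotation decreases by $1/2$ rather than increases. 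Once this convention is pinned down, each case is a direct substitution into Definition \ref{def:maslov-grading-difference}.
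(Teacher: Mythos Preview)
Your proof is correct and follows essentially the same approach as the paper: both apply the general formula \eqref{eq:maslov-difference} from Definition~\ref{def:maslov-grading-difference}, computing $\text{rotation}(P) = 1/2$ in cases (a) and (b) by subtracting the two corner contributions from the total turning of the disk boundary, and $\text{rotation}(P) = 0$ in case (c) after the cusp absorbs an additional half-turn. Your explicit caveat about the sign convention for the cusp is a fair point that the paper leaves implicit.
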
 
\begin{proof}
This follows from the general formula for $M(y) - M(x)$; see \eqref{eq:maslov-difference}. In cases $(a)$ and $(b)$, the net counterclockwise rotation traversing the closed loop from $x$ to $y$ in $\curveset_{\gr}$ and from $y$ to $x$ in $\ell_{p,q}$ is $2\pi$, but since this includes two right angles at $x$ and $y$ the net rotation along the smooth segments is $\pi$; it follows that the term $-2\text{rotation}(P)$ in \eqref{eq:maslov-difference} is $-1$. In case $(c)$ the extra cusp means that the net rotation along smooth segments in the boundary is 0. In each case the net winding number around the marked points is $k$, and in cases $(b)$ and $(c)$ the grading arrow contributes $m$ to weights$(P)$.
\end{proof}

\begin{figure}
\labellist
  \pinlabel {$x$} at 10 16
  \pinlabel {$y$} at 62 48
  \pinlabel {$x$} at 105 16
  \pinlabel {$y$} at 161 49
  \pinlabel {$x$} at 201 16
  \pinlabel {$y$} at 256 49
  
  \pinlabel {$\color{gray}m$} at 155 11
  \pinlabel {$\color{gray}m$} at 260 14
  
  \pinlabel {$(a)$} at 40 -5
  \pinlabel {$(b)$} at 140 -5
  \pinlabel {$(c)$} at 240 -5
         \endlabellist
\includegraphics[scale=1.5]{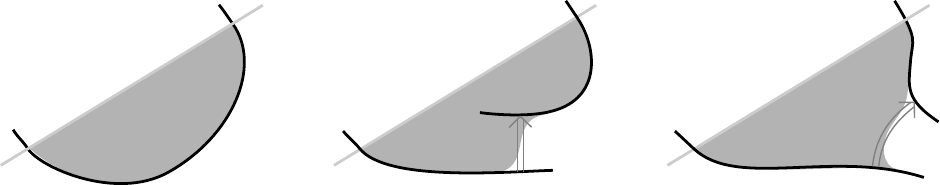}
\vspace{2 mm}

\caption{Three types of regions used to determine the relative Maslov grading on intersection Floer homology. Recall that a crossover arrow consists of a pair of edges in the train track as in Figure \ref{fig:crossover-convention}; the boundaries of the bigons in (b) and (c) use one of these edges.}
\label{fig:grading-bigons}
\end{figure}

The following theorem relates $\HFhat$ of surgery on a knot with intersection Floer homology as defined above. Recall that for $p/q$ surgery on a knot $K \subset S^3$ there is a canonical identification of $\spinc(S^3_{p/q}(K))$ with $\Zp$.

\begin{theorem}(Surgery formula)\label{thm:surgery-formula}
Consider a knot $K \in S^3$ and $p,q$ relatively prime with $p>0$. Fix a small $\epsilon > 0$ (in particular, $\epsilon < \frac 1 q$), and for each $i \in \Zp$ let $\ell_{p,q}^i$ be a straight line in the punctured cylinder $\pcylinder$ of slope $p/q$ that passes through the point $(0, -\tfrac 1 2 + \tfrac i q + \epsilon)$. Then
$$\HFhat( S^3_{p/q}, i) \cong HF( \hatGamma(K), \ell_{p,q}^i )$$
as relatively graded vector spaces, where the right hand side refers to intersection Floer homology in $\pcylinder$.
\end{theorem}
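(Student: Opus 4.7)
The plan is to derive the surgery formula as a direct consequence of the bordered Floer pairing theorem together with the HRW correspondence between bordered invariants and immersed curves. Heuristically, Dehn filling corresponds to gluing a solid torus along the boundary, and the pairing theorem of Lipshitz--Ozsv\'ath--Thurston translates this gluing into a box tensor product of bordered invariants. The immersed-curve reformulation of \cite{HRW} then converts this tensor product into the Lagrangian intersection Floer homology of the two underlying curves in $\ptorus$.

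First I would identify the two sides. By the definition of $\hatGamma(K)$ recalled in the excerpt, the decorated multicurve $\hatGamma(K)$ represents $\CFD(S^3 \setminus \nu(K))$. The solid torus glued in to produce $S^3_{p/q}(K)$ has the meridian wrapping $p$ times around $\mu$ and $q$ times around $\lambda$; under the immersed curve dictionary, its $\CFA$ is represented by a single oriented curve of slope $p/q$ in $\torus$. Applying the pairing theorem and the main result of \cite{HRW}, we obtain
\begin{equation*}
\HFhat(S^3_{p/q}(K)) \;\cong\; HF\!\bigl(\bar p(\hatGamma(K)),\, \bar\ell_{p,q}\bigr),
\end{equation*}
where $\bar\ell_{p,q}$ is the projection of $\ell_{p,q}$ to $\ptorus$ and the right-hand side is taken in the punctured torus.

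Second, I would promote this to the claimed spin$^c$-refined identification by lifting to $\pcylinder$. The covering $\bar p \co \pcylinder \to \ptorus$ is infinite cyclic with deck group generated by horizontal translation by one, and the preimage of $\bar\ell_{p,q}$ is a disjoint union of straight lines of slope $p/q$, with exactly $|p|$ deck-group orbits of such lines. The intersection chain complex computed in $\ptorus$ splits according to which lift of $\bar\ell_{p,q}$ realizes a given intersection point, and this splitting coincides with the spin$^c$ decomposition of $\HFhat(S^3_{p/q}(K))$. Hence
\begin{equation*}
\HFhat(S^3_{p/q}(K),\, i) \;\cong\; HF\!\bigl(\hatGamma(K),\, \ell_{p,q}^i\bigr)
\end{equation*}
for a family of lifts $\ell_{p,q}^i$ indexed by $i \in \Zp$.

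Third, I would verify that the specific parametrization $\ell_{p,q}^i$ passing through $(0,-\tfrac12+\tfrac iq+\epsilon)$ matches the canonical identification $\spinc(S^3_{p/q}(K)) \cong \Zp$. This amounts to pinning down one representative --- say, the identification of the $i=0$ spin$^c$ structure --- and checking that sliding from one lift to the next (i.e.\ crossing a marked point) corresponds to the standard generator of $\Zp$ on $\spinc$-structures. This is the classical labelling used, for example, in Ozsv\'ath--Szab\'o's large surgery formula, and the small offset $\epsilon$ is chosen precisely so that for each $i$ the line $\ell_{p,q}^i$ meets $\mu$ in a point strictly between two consecutive marked points. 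Finally, matching of relative Maslov gradings on each summand follows from Lemma \ref{lem:maslov-easy-bigons}: the grading axioms \eqref{eq:maslov-shift} and \eqref{eq:maslov-shift2} force the intersection Floer complex to carry the same relative grading as the corresponding summand of $\HFhat$, since both are normalized by the same bigon-counting rule.

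The main obstacle is Step 3, the bookkeeping of the spin$^c$ labelling. The existence of some parametrization and the grading agreement are essentially formal consequences of the pairing theorem, but aligning indices $i \in \Zp$ with the canonical labelling of spin$^c$ structures requires tracking base points, marked-point conventions, and the explicit identification between the solid-torus $\CFA$ and the straight line of slope $p/q$. Once that bookkeeping is settled the theorem follows by assembling the pieces above.
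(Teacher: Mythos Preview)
Your approach is essentially the same as the paper's: both derive the surgery formula as a special case of the HRW gluing theorem for bordered invariants (\cite[Theorem 2]{HRW} and \cite[Theorem 5]{HRW:companion}), identifying $\hatGamma(K)$ with the knot complement invariant and the solid torus invariant with a line of slope $p/q$; you simply spell out more of the spin$^c$ bookkeeping that the paper leaves implicit in its citation. One small slip: the deck group of $\bar p\co\pcylinder\to\ptorus$ is generated by \emph{vertical} translation by one (since $\mu$ is the curve that does not lift to a loop), not horizontal, though your count of $p$ orbits of lifts is correct regardless.
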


\begin{proof}
This is a special case of a much more general gluing formula for bordered Floer invariants (\cite[Theorem 2]{HRW} without gradings and \cite[Theorem 5]{HRW:companion} with gradings). Indeed, $\widehat\Gamma(K)$ is precisely the invariant $\HFhat(M)$ associated with the the knot complement $M = S^3 \setminus \nu(K)$,  the invariant $\HFhat(D^2\times S^1)$ is simply the meridian $\partial D^2 \times S^1$, and $p/q$-Dehn surgery corresponds to gluing $D^2 \times S^1$ to $M$ by a map taking the meridian to a line of slope $p/q$.
\end{proof}

A direct consequence of this is that $\rk( \HFhat( S^3_{p/q}(K) ) )$ is given by the minimal intersection in $\ptorus$ of $\bar p(\hatGamma)$ and $\ell_{p,q}$. This is because all bigons not covering a puncture can be removed by pulling $\bar p(\hatGamma)$ tight (here we need that no component of $\bar p(\hatGamma)$ is parallel to $\ell_{p,q}$ to ensure admissibility, but this is clear if $p \neq 0$). Similarly, $\rk( \HFhat( S^3_{p/q}(K), i ) )$ is the minimal intersection in $\pcylinder$ of $\hatGamma$ and $\ell_{p,q}^i$. We remark that when $\hatGamma$ is pulled tight as described in Section \ref{sec:invts-from-gamma}, it automatically has minimal intersection with each $\ell^i_{p,q}$.

Recall that if $\hatGamma$ is pulled tight, then outside of a neighborhood of the punctures it consists of a collection of some number $n$ of length one vertical segments and a single non-vertical segment of slope $m$. Then we have the following expression for $\rk( \HFhat( S^3_{p/q}(K) ) )$ (compare \cite[Proposition 9.5]{OzSz:rational-surgeries}):

\begin{proposition}\label{prop:total-rank}
With the integers $m$ and $n$ defined as above,  $\rk( \HFhat( S^3_{p/q}(K) ) ) = |p - mq| + n|q|$.
\end{proposition}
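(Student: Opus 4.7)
The plan is to apply Theorem~\ref{thm:surgery-formula} together with the observation made just before this proposition: when $\hatGamma$ is pulled tight, it attains minimal intersection with each $\ell_{p,q}^i$, so each Floer homology $HF(\hatGamma,\ell_{p,q}^i)$ has rank equal to the geometric count $|\hatGamma\cap\ell_{p,q}^i|$ in $\pcylinder$. Summing over $i\in\Zp$ gives
\[
\rk\HFhat(S^3_{p/q}(K))=\Bigl|\hatGamma\cap\bigsqcup_{i\in\Zp}\ell_{p,q}^i\Bigr|,
\]
and it suffices to compute this count. Using $\gcd(p,q)=1$, I would first observe that the union $\bigsqcup_i\ell_{p,q}^i$ is precisely the preimage $\bar p^{-1}(\ell_{p,q})$ of a single slope-$p/q$ closed curve in $\ptorus$, and that this preimage meets any vertical line in $\pcylinder$ at an arithmetic progression of heights of vertical spacing $1/q$; in particular it meets $\mu=\{0\}\times\R$ at the heights $\{-\tfrac{1}{2}+n/q+\epsilon:n\in\Z\}$.

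Next, I would decompose the count over the pieces of the pulled-tight $\hatGamma$: $n$ vertical segments of length one (each lying along $\mu$ between two adjacent pegs) and one non-vertical segment of slope $m$ wrapping once around the cylinder. Each vertical segment spans an open height interval of length one, which contains exactly $|q|$ of the crossings above; the vertical segments therefore contribute $n|q|$ intersections in total.

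For the non-vertical segment, I would lift to $\plane$ and view it as a straight segment of slope $m$ and horizontal width $1$. The lifts of $\bar p^{-1}(\ell_{p,q})$ to $\plane$ form an infinite family of parallel slope-$(p/q)$ lines spaced $1/q$ apart vertically. Over the segment's horizontal extent, the slope-$m$ segment drifts vertically by $m-p/q$ relative to this family, so it crosses $|(m-p/q)/(1/q)|=|p-mq|$ of the parallel lines. Adding the two contributions yields the claimed formula $|p-mq|+n|q|$.

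The main subtlety I foresee is making these counts rigorous at boundaries: in particular, choosing $\epsilon$ generically (which is permitted by the statement of Theorem~\ref{thm:surgery-formula}) to guarantee that no intersection lands on a peg or on the endpoint of a segment, and citing the surrounding discussion to confirm that pulled-tight position really realizes minimal position, so that the chain-level intersection count equals the rank of intersection Floer homology.
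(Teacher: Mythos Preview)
Your proposal is correct and follows essentially the same approach as the paper: decompose $\hatGamma$ into its $n$ vertical segments and the single non-vertical segment of slope $m$, and count intersections with the slope-$p/q$ lines separately. The only difference is cosmetic: the paper works directly in $\ptorus$, invoking the remark just before the proposition that the total rank equals the minimal intersection of $\bar p(\hatGamma)$ with $\ell_{p,q}$ in $\ptorus$, and then computes the non-vertical contribution as the determinant $\left|\det\begin{pmatrix} p & q \\ m & 1\end{pmatrix}\right|=|p-mq|$; you instead sum over spin$^c$ structures in $\pcylinder$ and count crossings of a slope-$m$ segment with a family of parallel slope-$p/q$ lines, which amounts to the same thing.
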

\begin{proof}
A line of slope $p/q$ in $\torus$ intersects the vertical line through the marked point $|q|$ times, so there are $n|q|$ intersection points coming from vertical segments in $\bar p(\hatGamma)$. The remaining intersection points come from intersections with the segment of slope $m$, and the number of such intersections is the distance between the slopes $p/q$ and $m/1$, namely $\left| \det \left( \begin{smallmatrix} p & q \\ m & 1 \end{smallmatrix} \right) \right|$.
\end{proof}

\begin{remark}\label{rmk:alternate-proof}
The key idea in the proof of \cite[Theorem 2]{HRW} (in the special case required for Theorem \ref{thm:surgery-formula}) is to perturb both $\hatGamma(K)$ and $\ell_{p,q}^i$ into a special form so that the intersection Floer chain complex can be directly identified with (one spin$^c$ component of) the box tensor product of two bordered Floer invariants, the homology of which is known to agree with $\HFhat(S^3_{p/q}, i)$. It is also possible to prove Theorem \ref{thm:surgery-formula} without passing through bordered Floer homology. This is accomplished by perturbing $\ell_{p,q}^i$ so that the intersection Floer chain complex $CF(\hatGamma(K), \ell_{p,q}^i)$ is identified with the complex $\widehat{\mathbb{X}}_i$ in the mapping cone formula  \cite{OzSz:rational-surgeries}, whose homology is also known to give $\HFhat(S^3_{p/q}, i)$. This identification was shown for large integer surgery in \cite[Proposition 70]{HRW:companion}, and the full proof will appear in \cite{Hanselman:CFK}. 
\end{remark}

While the two proofs are similar in spirit, this second proof has a few advantages since the mapping cone formula carries some information not available with bordered Floer homology. For example, the mapping cone formula recovers the absolute grading on $\HFhat(S^3_{p/q}, i)$ while bordered Floer homology can only give the relative grading. In addition, the identification of the Floer chain complex with the mapping cone formula can be generalized to one involving $\mathbb{X}^+_i$ instead of $\widehat{\mathbb{X}}_i$, so a version of Theorem \ref{thm:surgery-formula} holds for $+$ type invariants \cite{Hanselman:CFK}. We will not need the absolute grading or $+$ type invariants in the present paper, but we will make use of the identification mentioned in Remark \ref{rmk:alternate-proof} in one small way in Proposition \ref{prop:d-invariant} below; namely, we will use the fact that the subset of generators of $HF(\hatGamma(K), \ell_{p,q}^i)$ arising from any one curve component $\gamma_i$ in $\hatGamma(K)$ can be identified with the subset of generators of $S^3_{p,q}(K)$ arising from the mapping cone formula applied to direct summand of $\CFKinfty(K)$ corresponding to $\gamma_i$.

We will be interested in a special class of knots for which the bifiltered complex $\CFKinfty(K)$ has a direct summand that looks like $\CFKinfty(U)$, where $U$ is the unknot. That is, we require that for some choice of basis $\CFKinfty(K)$ has a generator with no differentials in or out. In this case we will say that $\CFKinfty(K)$ \emph{has an isolated generator}. Note that $\CFKinfty(K)$ having an isolated generator implies that the curve $\gamma_0$ in $\hatGamma(K)$ is homotopic to the horizontal curve wrapping around the cylinder once, but the latter condition is slightly weaker since $\gamma_0$ does not see diagonal arrows in $\CFKinfty(K)$ (giving an immersed curve condition equivalent to having an isolated generator would require the stronger invariant $\Gamma(K)$ and amounts to further imposing that $\gamma_0$ is not connected to any other $\gamma_i$ by the additional decorations in $\Gamma(K)$).

Recall that for a 3-manifold $Y$ with spin$^c$ structure $\spin$, the $d$-invariant, or correction term, $d(Y,\spin)$ is defined as the minimum absolute grading of an element of the image of $HF^\infty(Y, \spin)$ in $\HFplus(Y,\spin)$. Understanding $d$-invariants usually requires working the $+$ flavor of invariants; there is always a generator in $\HFhat(Y, \spin)$ whose absolute grading is the $d$-invariant, but without knowing the $U$-module structure on $\HFplus$ we generally have no way of knowing which generator gives the $d$-invariant (unless, of course, there is only one generator in $\HFhat(Y, \spin)$). However, if $Y$ is (nonzero) surgery on a knot $K$ for which $\CFKinfty(K)$ has an isolated generator, there is an obvious choice for a distinguished generator in each spin$^c$ structure and indeed this generator gives the $d$-invariant.

\begin{proposition}\label{prop:d-invariant}
Suppose $K$ is a knot for which $\CFKinfty(K)$ has an isolated generator, and in particular the distinguished curve $\gamma_0$ of $\hatGamma(K)$ is horizontal. Then for each $i \in \Zp$, the absolute grading of the generator of $\HFhat(S^3_{p/q}(K), i) \cong HF( \hatGamma(K), \ell_{p,q}^i )$ corresponding to the unique intersection point of $\gamma_0$ with $\ell_{p,q}^i$ is $d( S^3_{p/q}(K), i)$.
\end{proposition}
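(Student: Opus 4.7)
The plan is to invoke the $+$-flavored extension of Theorem~\ref{thm:surgery-formula} sketched in Remark~\ref{rmk:alternate-proof} and established in \cite{Hanselman:CFK}: an enhanced intersection Floer complex built from $\hatGamma(K)$ is identified with the mapping cone complex $\mathbb{X}^+_i$ of \cite{OzSz:rational-surgeries}, compatibly with absolute Maslov gradings. This identification is additive with respect to direct sum decompositions of $\CFKinfty(K)$ as a bifiltered complex: intersection points lying on a particular curve component $\gamma_j$ of $\hatGamma(K)$ correspond to the contribution to $\mathbb{X}^+_i$ coming from the associated bifiltered summand of $\CFKinfty(K)$.

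The isolated generator hypothesis yields a bifiltered direct sum decomposition $\CFKinfty(K) = \CFKinfty(U) \oplus C$, with $\CFKinfty(U)$ the rank-one summand generated by the isolated generator; in the passage to $UV = 0$ this summand corresponds to $\gamma_0$ in $\hatGamma(K)$. By additivity of the mapping cone,
\[
\HFplus(S^3_{p/q}(K), i) \,\cong\, \HFplus(S^3_{p/q}(U), i) \,\oplus\, H_*\big(\mathbb{X}^+_i(C)\big),
\]
where the first summand accounts exactly for the generators of $HF(\hatGamma(K), \ell_{p,q}^i)$ that lie on $\gamma_0$. The lens space $S^3_{p/q}(U)$ is an L-space, so $\HFplus(S^3_{p/q}(U), i)$ is a single infinite tower whose bottom element sits at absolute grading $d(S^3_{p/q}(U), i)$. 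Moreover, $\CFKinfty(K)$ and $\CFKinfty(U)$ both have homology $\F[U, U^{-1}]$, which forces $H_*(C) = 0$; combined with the fact that $\mathit{HF}^\infty$ of the rational homology sphere $S^3_{p/q}(K)$ is a single tower, this forces $H_*(\mathbb{X}^+_i(C))$ to be $U$-torsion, hence to lie entirely in $\HFplus_{red}$ and not affect the $d$-invariant.

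It follows that $d(S^3_{p/q}(K), i) = d(S^3_{p/q}(U), i)$, and the bottom of the tower in $\HFplus(S^3_{p/q}(K), i)$ is the image under the natural map $\HFhat \to \HFplus$ of the unique generator of the $\gamma_0$-summand of $HF(\hatGamma(K), \ell_{p,q}^i)$. That generator therefore has absolute Maslov grading $d(S^3_{p/q}(K), i)$, as claimed. The main obstacle is the $+$-version of the surgery formula itself and its compatibility with bifiltered summand decompositions and absolute gradings; this lies outside the scope of the present paper and is deferred to \cite{Hanselman:CFK}. Granted that tool, the rest of the argument is a formal manipulation of the mapping cone.
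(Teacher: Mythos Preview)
Your proof is correct and follows essentially the same approach as the paper's: both arguments invoke the identification from Remark~\ref{rmk:alternate-proof} (and \cite{Hanselman:CFK}) of the intersection Floer complex with the mapping cone, use the direct sum decomposition $\CFKinfty(K) = \CFKinfty(U)\oplus C$ induced by the isolated generator, and observe that the acyclic summand $C$ contributes only to $\HFplus_{red}$ so that the $d$-invariant is captured entirely by the $\gamma_0$ summand. Your version is slightly more explicit in spelling out why $H_*(\mathbb{X}^+_i(C))$ is $U$-torsion and in identifying $d(S^3_{p/q}(K),i)=d(S^3_{p/q}(U),i)$, but the substance is the same.
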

\begin{proof}
In the mapping cone formula, the direct summands of $\CFKinfty(K)$ give rise to direct summands for the mapping cone $\mathbb{X}_i$, and it is clear that to compute the $d$-invariant it is sufficient to consider only the unique non-acyclic summand of $\CFKinfty(K)$ and the corresponding summand of the mapping cone. When $\CFKinfty(K)$ has an isolated generator, the homology of this summand has rank one, so the $d$-invariant must be the grading of its only generator. We now appeal not just to Theorem \ref{thm:surgery-formula} but also to the identification of $CF(\hatGamma(K), \ell_{p,q}^i)$ with $\widehat{\mathbb{X}}_i$ mentioned in Remark \ref{rmk:alternate-proof}. The direct summands of $\CFKinfty(K)$ correspond to the curve components of $\hatGamma(K)$, with the non-acyclic summand corresponding to $\gamma_0$, and so the relevant summand of $\widehat{\mathbb{X}}_i$ is identified with the intersection Floer complex of $\gamma_0$ with $\ell_{p,q}^i$. Thus the grading of the unique generator of $HF(\gamma_0, \ell_{p,q}^i)$ is $d( S^3_{p/q}(K), i)$.
\end{proof}
This result is not at all surprising, but it does require the mapping cone formula proof of Theorem \ref{thm:surgery-formula} since the bordered Floer approach gives no way of confirming that the obvious distinguished summand $HF(\gamma_0, \ell_{p,q}^i)$ should capture the $d$-invariant. We remark that this use of Remark \ref{rmk:alternate-proof} in the proof of Proposition \ref{prop:d-invariant} is the only essential dependence of the present paper on \cite{Hanselman:CFK}.

%%% Section: partial survey of previous results
\section{Obstructing truly cosmetic surgeries}\label{sec:survey}

We now turn to a brief survey of some past results on which the arguments in the next section build. The first observation is that, since $H_1(S^3_{p,q}(K)) \cong \Zp$, any pair of cosmetic surgery slopes must have the same numerator. The next constraint is a condition on the Alexander polynomial of $K$ proved by Boyer and Lines:

\begin{theorem}\cite[Proposition 5.1]{BoyerLines}\label{thm:Boyer-Lines}
If $K$ admits a truly cosmetic surgery, then $\Delta_K''(1) = 0$.
\end{theorem}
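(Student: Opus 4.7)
My plan is to use the Casson--Walker surgery formula. For a knot $K\subset S^3$ and coprime integers $p > 0$, $q$, Walker's extension of the Casson invariant to rational homology spheres satisfies a surgery formula of the form
$$\lambda_W(S^3_{p/q}(K)) \;=\; \lambda_W(L(p,q)) + \frac{q}{2p}\Delta_K''(1),$$
isolating the dependence on $K$ through the second derivative of its Alexander polynomial at $1$. The first step is to observe that an orientation-preserving diffeomorphism forces the first homology groups of the two surgery manifolds to agree, so a truly cosmetic pair of slopes necessarily takes the form $p/q$ and $p/q'$ with $q \neq q'$. Equating the Casson--Walker invariants of the two surgeries then yields
\begin{equation*}
(q - q')\,\Delta_K''(1) \;=\; 2p\bigl(\lambda_W(L(p,q'))-\lambda_W(L(p,q))\bigr). \tag{$\ast$}
\end{equation*}

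The second step is to control the right-hand side of $(\ast)$ by pinning down the relationship between $q$ and $q'$ modulo $p$. An orientation-preserving diffeomorphism preserves not only the Casson--Walker invariant but also the linking form and the Reidemeister torsion of the surgery manifold. The linking form on $S^3_{p/q}(K)$ coincides with that of $L(p,q)$ independently of $K$, and augmenting it with an appropriate torsion/secondary invariant determines the orientation-preserving diffeomorphism type of the lens space. I would use these classical abelian invariants to deduce that $L(p,q)$ and $L(p,q')$ are themselves orientation-preservingly diffeomorphic, so that $\lambda_W(L(p,q)) = \lambda_W(L(p,q'))$. In the alternative congruence cases where the lens spaces are not forced to coincide, one can verify directly via Dedekind sum identities that their Casson--Walker invariants still agree.

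Combining these two steps, the right-hand side of $(\ast)$ vanishes while the coefficient $(q-q')$ on the left is nonzero, forcing $\Delta_K''(1)=0$. The main obstacle lies in the second step: one must extract enough information from the orientation-preserving diffeomorphism to neutralize the lens-space term, despite the fact that the Reidemeister torsion of $S^3_{p/q}(K)$ itself depends on $K$ through the Alexander polynomial. The resolution is to isolate those features of the surgery invariants that see only the lens-space data $(p,q)$, for instance by passing to the abelian covers or employing Casson--Gordon-type signature invariants, which suffice to enforce the lens-space congruence modulo $p$ needed to make $(\ast)$ collapse.
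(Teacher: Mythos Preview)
Your overall strategy---apply the Casson--Walker surgery formula and then argue that the lens-space contributions cancel---is exactly the one the paper uses. The gap is in your second step. The linking form of $S^3_{p/q}(K)$ only determines $q$ modulo $p$ up to multiplication by a square in $(\Z/p\Z)^\times$, not up to $q\mapsto q^{-1}$, so it does not force $L(p,q)\cong L(p,q')$ as oriented manifolds. And as you yourself note, the Reidemeister torsion of $S^3_{p/q}(K)$ genuinely depends on $\Delta_K$, so it cannot be used to isolate the lens-space data. Your fallback claim that ``Dedekind sum identities'' handle the remaining congruence cases is unsupported; there is no such identity valid for arbitrary $q,q'$ related only by a square factor.

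The paper resolves this cleanly by bringing in a second surgery formula, for the Casson--Gordon invariant:
\[
\boldtau(S^3_{p/q}(K)) = \boldtau(L(p,q)) - \sigma(K,p),
\]
where $\sigma(K,p)=\sum_{r=0}^{p-1}\sigma_K(e^{2\pi i r/p})$ is independent of $q$. Thus a truly cosmetic pair forces $\boldtau(L(p,q))=\boldtau(L(p,q'))$ directly, with no need to analyze congruences. The final ingredient is that for lens spaces $\boldtau(L(p,q))$ is a constant multiple of $p\,\lambda(L(p,q))$, so $\lambda(L(p,q))=\lambda(L(p,q'))$ follows, and $(\ast)$ collapses as desired. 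You were circling this at the end when you mentioned ``Casson--Gordon-type signature invariants,'' but the argument needs the specific surgery formula above and the $\boldtau$--$\lambda$ proportionality for lens spaces, neither of which appears in your proposal.
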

This result is a consequence of surgery formulas for the Casson-Walker invariant $\lambda$ and the Casson-Gordon invariant $\boldtau$:
\begin{eqnarray}
\lambda(S^3_{p/q}(K)) &=& \lambda(L(p,q)) + \frac{q}{2p} \Delta_K''(1) \label{eqn:CassonWalker}\\
\boldtau(S^3_{p/q}(K)) &=& \boldtau(L(p,q)) - \sigma(K, p), \label{eqn:CassonGordon}
\end{eqnarray}
where $\sigma(K,p) = \sum_{r=0}^{p-1} \sigma_K(e^{2i\pi r/p})$ does not depend on $q$. If $p/q$ and $p/ q'$ are truly cosmetic surgery slopes, \eqref{eqn:CassonGordon} implies that $\boldtau(L(p,q)) = \boldtau(L(p,q'))$. For a lens space, $\boldtau(L(p,q))$ is a constant multiple of $p\lambda(L(p,q))$, so in fact $\lambda(L(p,q)) = \lambda(L(p,q'))$. Then \eqref{eqn:CassonWalker} implies that either $q = q'$ or $\Delta_K''(1) = 0$.

Heegaard Floer homology entered the story when Ozsv{\'a}th and Szab{\'o} constructed a surgery formula in terms of knot Floer homology \cite{OzSz:rational-surgeries} and used it to prove the following proposition. As a demonstration of the machinery that will be used in this paper, we present a proof that is essentially equivalent to the one in \cite{OzSz:rational-surgeries} but is reframed in the language of the immersed curve surgery formula.
\begin{proposition}\cite[Theorem 1.5]{OzSz:rational-surgeries}
\label{prop:opposite-sign}
Suppose $S^3_{p/q_1}(K) \cong \pm S^3_{p/q_2}(K)$ with $q_1 \neq q_2$. Either $q_1$ and $q_2$ have opposite signs or $S^3_{p/q_1}(K)$ is an L-space.
\end{proposition}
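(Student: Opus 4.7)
The plan is to equate the total Heegaard Floer ranks of the two surgeries via the immersed-curves rank formula and then exploit the universal lower bound $\rk \HFhat(Y) \ge |H_1(Y;\Z)|$ for rational homology spheres. Combining Theorem~\ref{thm:surgery-formula} with Proposition~\ref{prop:total-rank}, and using that the rank of $\HFhat$ is invariant under orientation reversal, the hypothesis $S^3_{p/q_1}(K) \cong \pm S^3_{p/q_2}(K)$ yields
\[
f(q_1) = f(q_2), \qquad \text{where } f(q) := |p - mq| + n|q|.
\]

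Assume $q_1$ and $q_2$ share a sign; by the mirroring symmetry of the setup we may take both positive. Restricted to $(0,\infty)$ the function $f$ is continuous, convex, and piecewise linear with at most one corner at $q = p/m$ (present only when $m > 0$), having left slope $n - m$ and right slope $n + m$. The decisive ingredient is that $|H_1(S^3_{p/q}(K))| = p$, so $f(q) \ge p$ for every positive integer $q$ coprime to $p$, with equality if and only if $S^3_{p/q}(K)$ is an L-space. It therefore suffices to show that $f(q_1) = f(q_2)$ with $q_1 \ne q_2 > 0$ forces $f(q_1) = p$.

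I would do this by a short case analysis. If $m \le 0$, $f$ has a single slope $n - m \ge 0$ on $(0,\infty)$, and equality forces $n = m = 0$ and $f \equiv p$. If $m > 0$, the right slope $n + m$ is strictly positive, so $f$ is strictly increasing past the break; moreover $f(q_1) = p + (n-m)q_1 \ge p$ applied at an integer $q_1$ on the left piece forces $n - m \ge 0$, so $f$ is non-decreasing throughout $(0,\infty)$. If $p/m < 1$, every positive integer lies on the strictly increasing right piece, contradicting $f(q_1) = f(q_2)$; otherwise equality with $q_1 < q_2$ forces $f$ to be flat on an interval containing both, which requires $n = m$ and $q_1, q_2 \le p/m$. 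On this interval $f \equiv p$, so both surgeries are L-spaces.

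The main subtlety is the a priori possibility that $q_1 < p/m < q_2$, where a convex function can take equal values on opposite sides of its minimum without that common value being the minimum. What resolves this is that the L-space lower bound applies at the integer point $q_1$ itself, pinning $n - m \ge 0$ and collapsing the two-sided picture to the flat case above.
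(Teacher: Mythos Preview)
Your argument is correct and follows essentially the same approach as the paper: both proofs equate the total ranks via Proposition~\ref{prop:total-rank} and then carry out a case analysis on the positions of $q_1,q_2$ relative to the break point $p/m$ of the piecewise linear function $|p-mq|+n|q|$. The one noteworthy difference is that the paper simply asserts $n\ge m$ as a known structural fact about $\hatGamma$ when handling the mixed case $mq_1<p<mq_2$, whereas you derive it on the spot from the L-space lower bound $\rk\HFhat(Y)\ge |H_1(Y)|$ applied at the integer $q_1$ on the left piece; this makes your write-up slightly more self-contained but does not change the underlying strategy.
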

\begin{proof}
We must have that $\rk( \HFhat( S^3_{p/q_1}(K) ) ) = \rk( \HFhat( S^3_{p/q_2}(K) ) )$. By Proposition \ref{prop:total-rank},
\begin{equation}\label{eq:equal-hat-ranks}
|p - mq_1| + n|q_1| = |p - mq_2| + n|q_2|,
\end{equation}
where $m$ is the slope of the non-vertical segment in $\hatGamma(K)$ and $n$ is the number of vertical segments. By taking the mirror of $K$ if necessary, we may assume without loss of generality that $m \ge 0$. First suppose that $q_1$ and $q_2$ are both negative or that they are both positive and greater than $\tfrac p m$. In either case, \eqref{eq:equal-hat-ranks} simplifies to $(m+n)q_1 = (m+n)q_2$. Since $m+n > 0$ for a nontrivial knot, this implies that $q_1 = q_2$. Next suppose that $q_1$ and $q_2$ are both positive and smaller than $\tfrac p k$; in this case \eqref{eq:equal-hat-ranks} simplifies to $(n-m)q_1 = (n-m)q_2$. If $q_1 \neq q_2$, we must have $n = m$, which implies that $K$ is an L-space knot, and since $\tfrac{p}{q_i} > m = 2g(K)-1$, the result of either surgery is an $L$-space. Finally, suppose that $q_1$ and $q_2$ are both positive, with $mq_1 < p$ and $mq_2 > p$; \eqref{eq:equal-hat-ranks} becomes $p - mq_1 + nq_1 = mq_2 - p + nq_2$. This implies
$$n(q_2 - q_1) = 2p - m(q_1 + q_2) < 2mq_2 - m(q_1 + q_2) = m(q_2 - q_1).$$
This is a contradiction, since $n \ge m$.
\end{proof}

In the case of truly cosmetic surgery on a knot $K$ with Seifert genus equal to one, Ozsv{\'a}th and Szab{\'o} in fact showed that the surgery must be an $L$-space \cite[Theorem 1.4]{OzSz:rational-surgeries}. Wang ruled out this possibility, implying that the cosmetic surgery conjecture holds for all genus one knots \cite{Wang}; we will give a new proof of this fact in Section \ref{sec:new-obstructions} (see Corollary \ref{cor:genus1}). Wu later ruled out the possibility that truly cosmetic surgeries are $L$-spaces for arbitrary knots \cite{Wu} by observing that the restrictions on the Alexander polynomial of an $L$-space knot given in \cite{OzSz:lens-space-surgeries} imply that $\Delta_K''(1) \neq 0$ and applying Theorem \ref{thm:Boyer-Lines}. Thus truly cosmetic surgery slopes have opposite sign.

A significant advancement came in the following result of Ni and Wu:

\begin{theorem}\cite[Theorm 1.2]{NiWu}\label{thm:NiWu}
Suppose $S^3_{p/q} \cong S^3_{p/q'}$ with $q' \neq q$. Then
\begin{itemize}
\item[(i)] $\tau(K) = 0$, where $\tau$ is the Ozsv{\'a}th-Szab{\'o} concordance invariant;
\item[(ii)] $q' = -q$; and
\item[(iii)] $q^2 \equiv -1 \pmod p$.
\end{itemize}
\end{theorem}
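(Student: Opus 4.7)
The strategy is to combine three tools developed in the paper: the rank equality $\rk\HFhat(S^3_{p/q}(K)) = \rk\HFhat(S^3_{p/q'}(K))$ packaged as Proposition \ref{prop:total-rank}, the Alexander polynomial constraint $\Delta_K''(1)=0$ from Theorem \ref{thm:Boyer-Lines}, and the identification of $d$-invariants with intersections of the distinguished component $\gamma_0$ of $\hatGamma(K)$ given by Proposition \ref{prop:d-invariant}. The starting point is that Wu's refinement of Proposition \ref{prop:opposite-sign} (which uses Theorem \ref{thm:Boyer-Lines} to rule out L-space surgeries) already gives $qq'<0$; replacing $K$ by its mirror if needed I may assume $\tau(K)\ge 0$, $q>0$, and $q'<0$.

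For (i), Proposition \ref{prop:total-rank} writes the rank equality as $|p-mq|+n|q| = |p-mq'|+n|q'|$ with $m = 2\tau(K)-\epsilon(K)$ and $n$ the total number of vertical segments of $\curveset$. When $m=0$ this collapses to $q=-q'$; the difficulty is ruling out $m\neq 0$, which is forced whenever $\tau\ge 1$. For this I would invoke Proposition \ref{prop:d-invariant}: the $d$-invariants of $p/q$-surgery are the Maslov gradings of the unique intersections of $\gamma_0$ with the lines $\ell_{p,q}^i$, and reduce to $d(L(p,q),i) - 2V_{s(i)}$ where the $V_s$ are the non-negative, non-increasing integers read off $\gamma_0$ and vanish for $s\ge g(K)$. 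When $m>0$, $\gamma_0$ has a genuinely slanted non-vertical segment, so its intersections with positively and negatively sloped lines produce asymmetric patterns of $d$-invariants, and no bijection $\phi\co\Zp\to\Zp$ can match the multisets $\{d(S^3_{p/q}(K),i)\}$ and $\{d(S^3_{p/q'}(K),\phi(i))\}$.

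For (ii), once $\tau(K)=0$ is established we still have the possibility $\epsilon(K)=\pm 1$, i.e.\ $m=\mp 1$, in which case the rank equation admits parasitic solutions of the form $|q'| = q(n+1)/(n-1)$. These must be eliminated by a second, more delicate $d$-invariant comparison: the two endpoints of the slanted segment of $\gamma_0$ produce grading contributions that again cannot be matched under any bijection of $\spinc$ structures when the surgery coefficients have opposite signs. With $\epsilon=0$ forced, the rank equation reduces to $n|q|=n|q'|$ and gives $q'=-q$ directly. For (iii), with $q'=-q$ and $\gamma_0$ horizontal, the $\pi$-rotation symmetry of $\hatGamma$ noted in property (IV) of Section \ref{sec:properties} implies that the knot-side contributions $V_{s(i)}$ depend only on the absolute value of the relevant label. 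Matching $d$-invariants therefore reduces to the existence of a bijection $\phi$ on $\Zp$ with $d(L(p,q),i)=d(L(p,-q),\phi(i))$, respecting the partition by $|s|$. Classical formulas for lens space correction terms together with the oriented classification $L(p,q)\cong L(p,q'')$ iff $qq''\equiv 1\pmod p$ translate the existence of such a $\phi$ into the arithmetic condition $q\cdot(-q)\equiv -1\pmod p$, i.e.\ $q^2\equiv -1\pmod p$.

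The main obstacle is Step (i): the rank equation alone admits genuine solutions with $m\neq 0$, so one cannot avoid bringing in the $d$-invariants and exploiting the rigidity coming from both the monotonicity of the $V_s$ and the arithmetic of lens space correction terms. Steps (ii) and (iii) are technically similar but considerably lighter once the key $\tau=0$ conclusion is in hand.
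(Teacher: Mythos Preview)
Your proposal has a genuine gap in Step~(i), and it stems from invoking Proposition~\ref{prop:d-invariant} in exactly the situation where its hypothesis fails. That proposition assumes $\CFKinfty(K)$ has an isolated generator, equivalently that $\gamma_0$ is horizontal (so $m=0$); you are trying to use it precisely when $m\neq 0$ in order to conclude $m=0$. The statement that the $d$-invariant is the grading of ``the unique intersection of $\gamma_0$ with $\ell_{p,q}^i$'' is not available here: when $\gamma_0$ is not horizontal it may meet $\ell_{p,q}^i$ in several points, and identifying which one carries the $d$-invariant requires exactly the kind of $\HFplus$ input that Proposition~\ref{prop:d-invariant} is designed to supply only in the horizontal case. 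Beyond this circularity, the assertion that ``no bijection $\phi$ can match the multisets'' of $d$-invariants is not an argument; the rank equation already shows that nontrivial coincidences between positive- and negative-slope data really do occur, so a bare asymmetry claim is not enough.

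The actual Ni--Wu argument, as sketched in the paper, avoids all of this with a clean sandwich. The $d$-invariant surgery formula gives, for \emph{every} knot and every $p/q>0$, the inequality $d(S^3_{p/q}(K),i)\le d(L(p,q),i)$ in each spin$^c$ structure, with equality for all $i$ iff $V_0=H_0=0$; for $p/q'<0$ the inequality reverses. Meanwhile the Casson--Walker/Casson--Gordon surgery formulas (Theorem~\ref{thm:Boyer-Lines} and the discussion around \eqref{eqn:CassonWalker}--\eqref{eqn:CassonGordon}) force $\lambda(L(p,q))=\lambda(L(p,q'))$, hence $d(L(p,q))=d(L(p,q'))$ as total sums. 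Summing the two inequalities and using $S^3_{p/q}(K)\cong S^3_{p/q'}(K)$ squeezes everything to equality, yielding $V_0=H_0=0$ directly. This is strictly stronger than $\tau=0$ (it gives $\epsilon=0$ too), so your separate Step~(ii) analysis of the case $\epsilon=\pm1$ is unnecessary: once $m=0$ the rank equation immediately gives $|q|=|q'|$. For (iii), the paper again uses the summed $d$-invariants rather than a bijection-matching argument: $d(L(p,q))=d(L(p,-q))=-d(L(p,q))$ forces $d(L(p,q))=0$, and \eqref{eq:sum-of-dinvts} converts this to $q^2\equiv -1\pmod p$.
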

The key ingredient here was a surgery formula for the $d$-invariants in Heegaard Floer homology \cite[Proposition 1.6]{NiWu}. A consequence of the surgery formula is that for $p/q > 0$, the $d$-invariants of $S^3_{p/q}(K)$ are less than or equal to the corresponding $d$-invariants of $S^3_{p/q}(U) = L(p,q)$, with equality holding for all spin$^c$ structures if and only if $V_0(K) = H_0(K) = 0$, where $V_0$ and $H_0$ are integer invariants related to certain maps in the rational surgery formula. For $p/q' < 0$, the same relationship holds with the inequality reversed. Let $d(Y)$ denote $\sum_{\spin\in\spinc(Y)} d(Y,\spin)$. For a lens space, $d(L(p,q))$ is a constant multiple of the Casson-Walker invariant $\lambda(L(p,q))$, and it was already noted that for a truly cosmetic surgery Equations \eqref{eqn:CassonWalker} and  \eqref{eqn:CassonGordon} imply that $\lambda(L(p,q)) = \lambda(L(p,q'))$. Thus
$$d(S^3_{p/q}) \le d(L(p,q)) = d(L(p,q')) \le d(S^3_{p/q'}).$$
For a truly cosmetic surgery equality must hold, so $V_0(K) = H_0(K) = 0$. This in particular implies (i), and then by Proposition \ref{prop:total-rank} $\rk (\HFhat( S^3_{p/q}(K) ))$ is a linear function of $|q|$, which implies $(ii)$. $(iii)$ follows from the fact that $d(L(p,q)) = d(L(p,-q)) = -d(L(p, q))$, and an explicit formula for $\lambda(L(p,q))$ showing that $\lambda(L(p,q)) = 0$ if and only if $q^2 \equiv -1 \pmod p$.

In fact, the first conclusion is slightly understated, since the proof really shows that $V_0(K) = H_0(K) = 0$ \cite[Theorem 2.5]{NiWu}, and this is strictly stronger than $\tau(K)$ being zero. Hom showed that when this condition holds then $\CFKinfty(K)$ has an isolated generator \cite[Proposition 3.11]{Hom:survey}. Recall that by this we mean for some choice of basis $\CFKinfty(K)$ has a single generator with no differentials in or out. Hom's paper also shows that the bifiltered chain complex $\CFKinfty(K)$, taken up to filtered chain homotopy equivalence and up to adding and removing acyclic summands, is a concordance invariant from which all known Heegaard Floer concordance invariants can be derived; having an isolated generator is equivalent to this concordance invariant being trivial. In the language of immersed curves, $\CFKinfty(K)$ having an isolated generator implies that $\gamma_0$ is the horizontal curve wrapping around the cylinder once; this in turn is equivalent to $\epsilon(K)$ being $0$ and implies $\tau(K) = 0$. To summarize, we have the following implications:
$$\CFKinfty(K) \text{ has isolated genarator} \underset{\centernot\impliedby}{\implies} \gamma_0 \text{ is horizontal } \iff \epsilon(K) = 0 \underset{\centernot\impliedby}{\implies} \tau(K) = 0. $$
Thus, Ni and Wu really proved the following:
\begin{theorem}\cite[Theorem 1.2, enhanced]{NiWu}\label{thm:NiWu-enhanced}
Suppose $S^3_{p/q} \cong S^3_{p/q'}$ with $q' \neq q$. Then
\begin{itemize}
\item[(i)] $\CFKinfty(K)$ has an isolated generator. In particular, $\epsilon(K) = \tau(K) = 0$.
\item[(ii)] $q' = -q$; and
\item[(iii)] $q^2 \equiv -1 \pmod p$.
\end{itemize}
\end{theorem}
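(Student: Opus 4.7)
\emph{Proof plan.} The plan is to run the Ni--Wu argument and observe that the quantity it actually extracts is not merely $\tau(K) = 0$ but the stronger vanishing $V_0(K) = H_0(K) = 0$, which by Hom's structure theorem is equivalent to $\CFKinfty(K)$ admitting a basis with an isolated generator. Without loss of generality I assume $p > 0$ and $q > 0$. First, by Proposition \ref{prop:opposite-sign} the two slopes either have opposite signs or both surgeries are L-spaces; the latter is ruled out by Wu's observation that any L-space knot has $\Delta_K''(1) \neq 0$, contradicting Theorem \ref{thm:Boyer-Lines}. So I may assume $q' < 0$.

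Next I would invoke the $d$-invariant surgery formula of \cite[Proposition 1.6]{NiWu}: there exist non-negative integer invariants $V_0(K), H_0(K)$ of $K$, extracted from $\CFKinfty(K)$, such that for every $\spin^c$ structure $i$,
\[ d(S^3_{p/q}(K), i) \le d(L(p,q), i), \qquad d(S^3_{p/q'}(K), i) \ge d(L(p,q'), i), \]
with simultaneous equality across all $i$ equivalent to $V_0(K) = H_0(K) = 0$. Summing over $i$, and combining with the identity $\lambda(L(p,q)) = \lambda(L(p,q'))$ derived from \eqref{eqn:CassonWalker} and \eqref{eqn:CassonGordon} on a truly cosmetic pair (together with the fact that $d(L(p,q))$ is a fixed nonzero multiple of $\lambda(L(p,q))$), I obtain
\[ d(S^3_{p/q}(K)) \;\le\; d(L(p,q)) \;=\; d(L(p,q')) \;\le\; d(S^3_{p/q'}(K)). \]
The cosmetic hypothesis $S^3_{p/q}(K) \cong S^3_{p/q'}(K)$ forces equality throughout, so $V_0(K) = H_0(K) = 0$.

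Conclusion (i) is then immediate from Hom's \cite[Proposition 3.11]{Hom:survey}. For (ii), the vanishing also yields $\epsilon(K) = 0$, so the distinguished curve $\gamma_0$ of $\hatGamma(K)$ is horizontal and the slope $m$ of Proposition \ref{prop:total-rank} is zero; hence $\rk \HFhat(S^3_{p/q}(K)) = p + n|q|$ is strictly monotone in $|q|$ (noting $n > 0$ for a nontrivial knot when $m=0$, else $K$ is the unknot), and equal ranks force $|q'| = |q|$, giving $q' = -q$. For (iii), combining $q' = -q$ with the identity $d(L(p,q)) = d(L(p,-q)) = -d(L(p,q))$ gives $d(L(p,q)) = 0$, equivalently $\lambda(L(p,q)) = 0$; the explicit Dedekind-sum formula for the Casson--Walker invariant of a lens space then yields $q^2 \equiv -1 \pmod p$. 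The main obstacle is the spin$^c$-refined $d$-invariant surgery inequality of \cite{NiWu} and the identification of its equality case with the vanishing of both $V_0$ and $H_0$; once those are in hand, the remainder is bookkeeping with classical surgery formulas for Casson-type invariants and a direct appeal to Hom's concordance classification.
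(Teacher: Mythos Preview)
Your proposal is correct and follows essentially the same route as the paper: you run the Ni--Wu $d$-invariant argument to extract $V_0(K)=H_0(K)=0$, then invoke Hom's \cite[Proposition 3.11]{Hom:survey} for the isolated generator, use Proposition~\ref{prop:total-rank} with $m=0$ for (ii), and deduce (iii) from $d(L(p,q))=0$ via the Casson--Walker formula. The only cosmetic difference is that you explicitly front-load the opposite-sign reduction via Proposition~\ref{prop:opposite-sign} and Wu's L-space observation, whereas the paper has already established this in the surrounding discussion before stating Theorem~\ref{thm:NiWu}.
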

It makes sense that the original theorem was stated in terms of $\tau$ only, as $\epsilon$ had not been defined at that time and the condition that $V_0 = H_0 = 0$ or that $\CFKinfty$ has an isolated generator makes for a more cumbersome statement. However, this means that some implications of Ni and Wu's work, which has already found many wonderful applications, have been overlooked. For example, the following result follows immediately from Theorem \ref{thm:NiWu-enhanced} and a cabling formula of Hom \cite[Theorem 2]{Hom:cables}, which says that $\epsilon$ of a cable is never zero:

\begin{corollary}\label{cor:genus-one}
The cosmetic surgery conjecture holds for any nontrivial cable of a knot in $S^3$.
\end{corollary}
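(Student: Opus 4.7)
The plan is almost entirely to chain two results that have already been presented: the enhanced form of the Ni--Wu theorem (Theorem \ref{thm:NiWu-enhanced}) and Hom's cabling formula for $\epsilon$ \cite[Theorem 2]{Hom:cables}. Suppose for contradiction that $K \subset S^3$ is a nontrivial cable admitting a pair of truly cosmetic surgeries $S^3_{p/q}(K) \cong S^3_{p/q'}(K)$ with $q \neq q'$. By Theorem \ref{thm:NiWu-enhanced}(i), the bifiltered complex $\CFKinfty(K)$ must have an isolated generator; in particular, $\epsilon(K) = 0$.

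Now I would invoke Hom's cabling formula. Writing $K = C_{p,q}(K')$ for some companion $K' \subset S^3$ and nontrivial cabling parameters (so that $|q| \ge 2$), the formula expresses $\epsilon(K)$ in terms of $\epsilon(K')$ and the cabling parameters, and in every case produces a nonzero value: if $\epsilon(K') = \pm 1$ then $\epsilon(K)$ inherits that same sign, while if $\epsilon(K') = 0$ (which includes the case of the unknot, giving torus knots) then $\epsilon(K) = \pm 1$ depending on the sign dictated by the cabling parameters. Either way, $\epsilon(K) \ne 0$, contradicting the consequence of Theorem \ref{thm:NiWu-enhanced} above.

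There is essentially no obstacle here, and no calculation to perform beyond citing the two inputs. The point worth emphasizing, however, is that this corollary is \emph{not} an immediate consequence of the original statement of \cite{NiWu}: the original conclusion that $\tau(K) = 0$ would not suffice, since nontrivial cables can easily have $\tau = 0$ while still having $\epsilon \ne 0$. The corollary is exactly the sort of application that becomes accessible once one sharpens the Ni--Wu obstruction to the statement that $\CFKinfty(K)$ has an isolated generator (equivalently, that $\gamma_0$ is the horizontal curve and $\epsilon(K) = 0$), as in Theorem \ref{thm:NiWu-enhanced}.
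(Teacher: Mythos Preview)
Your proof is correct and matches the paper's argument essentially verbatim: the paper also derives the corollary immediately from Theorem~\ref{thm:NiWu-enhanced} together with Hom's result \cite[Theorem 2]{Hom:cables} that $\epsilon$ of a nontrivial cable is never zero. Your additional remark that $\tau(K)=0$ alone would not suffice is likewise exactly the observation the paper makes in the surrounding discussion.
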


This result was recently proved in \cite{Tao} using Theorem \ref{thm:NiWu} and Hom's cabling formula for $\tau$ \cite[Theorem 1]{Hom:cables} to rule out many cases, but other methods were needed to deal with cables for which $\tau = 0$.

We end this section with a technical result that will be required later, related to one used by Ni and Wu in the proof of Theorem \ref{thm:NiWu}. Recall that part $(iii)$ of that Theorem follows from the fact that if $p/q$ is a truly cosmetic surgery slope then the sum of all the $d$-invariants of $L(p,q)$ must be zero. More precisely, there is an explicit formula for this sum of $d$-invariants \cite[Lemmas 2.2 and 4.3]{Ras:lens-space-surgeries}:

\begin{equation}\label{eq:sum-of-dinvts}
d(L(p,q)) := \sum_{i = 0}^{p-1} d(L(p,q),i) = p \lambda(L(p,q)) = -\frac{1}{12}\left[ q + q' + p\sum_{i=1}^n (a_i - 3) \right],
\end{equation}

where $q'$ is the unique integer $0<q' <p$ with $qq' \equiv 1 \mod p$ and $[a_1, \ldots, a_n]$ is the Hirzebruch-Jung continued fraction expansion for $p/q$. If this sum is 0, then considering the term in the brackets modulo $p$ implies that $q \equiv -q' \mod p$ and Theorem \ref{thm:NiWu}$(iii)$ follows. We will at times be interested in only the first $q$ $d$-invariants of $L(p,q)$; below we show that when the sum of all $p$ $d$-invariants of $L(p,q)$ is zero, then the sum of the first $q$ of them is nonzero.

\begin{lemma}\label{lem:first-q}
For $p > q > 0$ relatively prime and $q^2 \equiv -1 \pmod p$, then $\displaystyle \sum_{i = 0}^{q-1} d(L(p,q), i) \neq 0$.
\end{lemma}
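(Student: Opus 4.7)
The plan is to evaluate the partial sum in closed form via the Ozsv{\'a}th--Szab{\'o} recursion for $d$-invariants of lens spaces, and then force a contradiction by a short divisibility argument. Recall that the recursion states
\[ d(L(p,q),i)=-\tfrac14+\frac{(2i+1-p-q)^2}{4pq}-d(L(q,r),\,i\bmod q), \]
valid for $0\le i<p+q$, where $r=p\bmod q$. On the range $0\le i\le q-1$ the reduction $i\bmod q$ is just $i$, so summing and using the notation of \eqref{eq:sum-of-dinvts} gives
\[ \sum_{i=0}^{q-1}d(L(p,q),i)=-\tfrac{q}{4}+\frac{1}{4pq}\sum_{i=0}^{q-1}(2i+1-p-q)^2-d(L(q,r)). \]

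Next I would evaluate the middle term as an elementary quadratic sum. Writing $2i+1-p-q=(2i+1-q)-p$ and using the identities $\sum_{i=0}^{q-1}(2i+1-q)=0$ and $\sum_{i=0}^{q-1}(2i+1-q)^2=\tfrac{q(q^2-1)}{3}$, it reduces to $\tfrac{3p^2+q^2-1}{12p}$. Collecting terms yields the closed form
\[ \sum_{i=0}^{q-1}d(L(p,q),i)=\frac{3p^2-3pq+q^2-1}{12p}-d(L(q,r)). \]
By \eqref{eq:sum-of-dinvts} the quantity $12\,d(L(q,r))$ is an integer, so if this partial sum vanished then, after multiplying through by $12p$, we would obtain $3p^2-3pq+q^2-1=pM$ for some integer $M$, i.e.\ $p\mid q^2-1$. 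Combining this with the standing hypothesis $p\mid q^2+1$ forces $p\mid 2$.

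Since $p>q\ge 1$, the only remaining case is $p=2$, $q=1$, and there the partial sum reduces to the single term $d(L(2,1),0)=\pm\tfrac14\neq 0$, a contradiction that finishes the argument. The only real work in the plan is the quadratic sum calculation; once the closed form is in hand the proof is one line of modular arithmetic, and the hypothesis $q^2\equiv-1\pmod p$ is used in precisely the way needed to obstruct the divisibility $p\mid q^2-1$ that would otherwise let the closed form cancel against $d(L(q,r))$.
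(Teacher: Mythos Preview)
Your proof is correct and follows essentially the same route as the paper's: apply the Ozsv{\'a}th--Szab{\'o} recursion, evaluate the quadratic sum to isolate the term $\tfrac{q^2-1}{12p}$, observe that $12\,d(L(q,r))$ is an integer, and deduce that vanishing of the partial sum would force $p\mid q^2-1$, contradicting $q^2\equiv -1\pmod p$ except when $p=2$. The only cosmetic difference is that the paper phrases the argument as showing the sum is nonzero modulo $\tfrac{1}{12}$, whereas you multiply through by $12p$ and argue directly with integer divisibility; the substance is identical.
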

\begin{proof}
We will show that the sum is nonzero modulo $1/12$. We use the recursive formula for $d$-invariants of $L(p,q)$ given by Ozsv{\'a}th and Szab{\'o} \cite[Proposition 4.8]{OzSz:absolutely-graded}:
$$d(L(p,q),i) = -\frac 1 4 + \frac{(2i + 1-p - q)^2}{4pq} - d(L(q,r), j),$$
where $r$ and $j$ are the mod $q$ reductions of $p$ and $i$, respectively. In particular,
$$\sum_{i=0}^{q-1} d(L(p,q),i) = -\sum_{i=0}^{q-1} \frac 1 4 + \sum_{i=0}^{q-1}\frac{(2i + 1-p - q)^2}{4pq} - \sum_{i=0}^{q-1}d(L(q,r), i)$$
The third sum on the right hand side is simply $d(L(q,r))$, and it it is easy to see from \eqref{eq:sum-of-dinvts} that this is an integer multiple of $1/12$. The first sum on the right hand side, which evaluates to $q/4$, is also a multiple of $1/12$, so it is enough to check that the second sum is not.
$$\sum_{i=0}^{q-1}(2i + 1-p - q)^2 = \sum_{i=0}^{q-1}\left(p^2 + 2p(q - 1 - 2i) + (q-1-2i)^2 \right)
= qp^2 + 2p\sum_{i=0}^{q-1} (q - 1 - 2i) + \sum_{i=0}^{q-1} (q-1-2i)^2$$
The second term in the expression on the right is 0, since the summands run evenly from $q-1$ to $-(q-1)$. The final term is twice the sum of the first $\tfrac q 2$ odd squares if $q$ is even, or twice the sum of the first $\tfrac{q-1}{2}$ even squares if $q$ is odd; in either case, the sum evaluates to $\frac{q(q-1)(q+1)}{3}$. Thus we need to show that
$$\frac{qp^2 + q(q^2 -1)/3}{4pq} = \frac{p}{4} + \frac{q^2 - 1}{12p}$$ 
is not a multiple of $1/12$. The first term clearly is, but the second term is not as long $q^2 \not\equiv 1 \mod p$. This holds in particular when $q^2 \equiv -1 \pmod p$, unless $p=2$. We complete the proof by directly checking the case $p=2, q=1$: the claim holds since $d(L(2,1),0) = 1/4 \neq 0$.
\end{proof}

%%% Section: proof of main theorem
\section{New obstructions}\label{sec:new-obstructions}

Throughout this section we fix a knot $K$, and let $\hatGamma = \hatGamma(K)$ with underlying set of immersed curves $\curveset$. We will assume that $\CFKinfty(K)$ has an isolated vertex, which by Theorem \ref{thm:NiWu-enhanced} is necessary for $K$ to admit a truly cosmetic surgery. In particular this means that the distinguished curve $\gamma_0$ in $\curveset$ is horizontal. Theorem \ref{thm:NiWu-enhanced} also says that any pair of cosmetic surgery slopes are opposite, so we fix $p, q >0$ relatively prime, and set $Y_+ = S^3_{p/q}(K)$ and $Y_- = S^3_{-p/q}(K)$. Our goal is to obstruct $Y_+$ and $Y_-$ from being orientation preserving diffeomorphic by finding conditions under which $\HFhat(Y_+)$ and $\HFhat(Y_-)$ are not isomorphic as graded vector spaces.

The results in the previous section primarily make use of the Casson-Walker and Casson-Gordon invariants, the total rank of $\HFhat$, and the $d$-invariants, which can be viewed as the Maslov grading of one special generator of $\HFhat$ for each spin$^c$ structure. To extract more information and produce new obstructions, we will need to use the Maslov grading of all generators. In particular, the set of gradings of all generators of $\HFhat$ is an invariant, as is the partitioning of this set into subsets according to spin$^c$ structures. To avoid working with absolute gradings, we define $$\mrel(x) = M(x) - d(Y, \spin)$$ for $x$ in $\HFhat(Y, \spin)$. Since $\gamma_0$ is horizontal, for each $i \in \Zp \cong \spinc(Y_\pm)$ there is a distinguished generator in $\HFhat(Y_\pm, i)$ coming from the unique intersection point of $\gamma_0$ with $\ell_{p,\pm q}^i$; we will denote this generator $x_0^i$. By Proposition \ref{prop:d-invariant} the absolute grading of $x_0^i$ is $d(Y_\pm, i)$. Thus for $Y_+$ and $Y_-$, $\mrel(x)$ is simply the Maslov grading relative to the distinguished generator in the same spin$^c$ structure, i.e. $\mrel(x) = M(x) - M(x_0^i)$. We will consider the following multisets (that is, sets with repetition allowed) of relative gradings:
\begin{eqnarray*}
\mrel(Y) &=& \{ \mrel(x) | x \text{ a generator of } \HFhat(Y)\} \\
\mrel(Y,\spin) &=& \{ \mrel(x) | x \text{ a generator of } \HFhat(Y,\spin)\}
\end{eqnarray*}
These are invariants of $Y$ and the pair $(Y, \spin)$, respectively. In particular, if $Y_+ \cong Y_-$ then the sets $\mrel(Y_+)$ and $\mrel(Y_-)$ agree. Moreover, there is some permuation $\sigma$ on $\Zp$ such that $\mrel(Y_+, i) = \mrel(Y_-, \sigma(i))$. We will at times refer to the sum of all elements in these sets, which we denote $ \mrelsum(Y)$ and $\mrelsum(Y,\spin)$, respectively. 

\begin{remark}

Both $\spinc(Y_+)$ and $\spinc(Y_-)$ can be identified with $\Zp$ in a way that is canonical \emph{given the surgery description}, but this identification is not an invariant of the manifold. Thus even if $Y_+ \cong Y_-$, the $i$th spin$^c$ structure of $Y_+$ need not agree with the $i$th spin$^c$ structure of $Y_-$; this is why the permutation $\sigma$ is required above.\end{remark}

It is easy to see that the ranks of $\HFhat(Y_+)$ and $\HFhat(Y_-)$ agree. Indeed, since $\gamma_0$ is horizontal, the slope $m$ of the non-vertical segment in $\hatGamma$ is 0, so by Proposition \ref{prop:total-rank}
$$\rk(\HFhat(Y_+)) = p + n|q| = p + n|-q| = \rk(\HFhat(Y_-)).$$ Our main strategy for studying the sets of gradings described above is to define for each $i \in \Zp$ a particular map $\phi_i \co \HFhat(Y_+, i) \to \HFhat(Y_-, i)$ which is an isomorphism of ungraded vector spaces. In other words, $\phi_i$ gives a one-to-one correspondence between generators of $\HFhat(Y_+, i)$ and generators of $\HFhat(Y_-, i)$. This correspondence does not preserve the relative grading $\mrel$, even in the case that $\HFhat(Y_+)$ and $\HFhat(Y_-)$ are isomorphic as graded vector spaces; however we will be able to say explicitly how $\mrel$ changes under $\phi_i$ and we can use this to determine if the sets of gradings defined above are fixed. Combining these maps for all $i$ gives an (ungraded) isomorphism $\phi \co \HFhat(Y_+)\to\HFhat(Y_-)$.

We will assume that $\hatGamma$ has the form described in Section \ref{sec:invts-from-gamma} and shown in Figure \ref{fig:pulled-tight-fig8}. That is, we assume the curve is pulled tight, noting that outside of a neighborhood of the marked points each curve $\gamma_i$ with $i\neq 0$ consists of a collection of roughly vertical segments, and we perturb the curves slightly so that these are in fact parallel vertical segments (see Figure \ref{fig:reflection-map}). The endpoints of these vertical arcs are connected in some way within the neighborhoods of the marked points, but this information will not be relevant to us. Recall that $n_s$ denotes the number of these vertical segments at height $s$, and $n = \sum_{s = -\infty}^\infty n_s$ is the total number of vertical segments. When $\hatGamma$ is pulled tight in this way, it is clear that it intersects minimally with $\ell_{p,q}^i$ and $\ell_{p,-q}^i$, so we may view $\HFhat(Y_\pm, i) \cong HF(\hatGamma, \ell_{p,\pm q}^i)$ as generated by $\hatGamma \cap \ell_{p,\pm q}^i$. We will now define $\phi_i$ by describing a one-to-one correspondence between $\hatGamma \cap \ell_{p,q}^i$ and $\hatGamma \cap \ell_{p,-q}^i$. Note that $\ell_{p,-q}^i$ is the reflection across $\mu$ of $\ell_{p,q}^i$. In particular each intersection of $\ell_{p,q}^i$ with $\mu$ is also an intersection of $\ell_{p,-q}^i$ with $\mu$. The vertical segments are arbitrarily close to $\mu$, so each intersection of $\ell_{p,q}^i$ with a vertical segment can be uniquely identified with an intersection point in $\mu \cap \ell_{p,q}^i$, namely the nearest such intersection point. Conversely, for each point in $\mu \cap \ell_{p,q}^i$ and for each vertical segment at the same height as that point, there is exactly one nearby intersection of the vertical segment with $\ell_{p,q}^i$. The same is true for intersections between a vertical segment and $\ell_{p,-q}^i$. If $x \in \hatGamma \cap \ell_{p,q}^i$ lies on a vertical segment in $\hatGamma$, then we define $\phi_i(x)$ to be the point on $x \in \hatGamma \cap \ell_{p,-q}^i$ that lies on the same vertical segment in $\hatGamma$ and corresponds to the same point in $\mu \cap \ell_{p,q}^i = \mu \cap \ell_{p,-q}^i$; see the right side of Figure \ref{fig:reflection-map} depicting a neighborhood of one point in $\mu \cap \ell_{p,\pm q}^i $. If $x \in \hatGamma \cap \ell_{p,q}^i$ does not lie on a vertical segment in $\hatGamma$ then it is the unique point in $\gamma_0 \cap \ell_{p,q}^i$, corresponding to the distinguished generator of $\HFhat(Y_+, i)$; in this case we define $\phi_i(x)$ to be the unique point in $\gamma_0 \cap \ell_{p,-q}^i$, so $\phi_i$ identifies the distinguished generators.

\begin{figure}
\labellist
  \pinlabel {$\color{gray}\ell_{p,q}^i$} at 81 186
  \pinlabel {$\color{gray}\ell_{p,-q}^i$} at 22 186
  \pinlabel {$\gamma_0$} at 70 100
  
  \pinlabel {$\mu$} at 213 185
  \pinlabel {$\color{gray}\ell_{p,q}^i$} at 268 150
  \pinlabel {$\color{gray}\ell_{p,-q}^i$} at 270 45

\tiny
  \pinlabel {$a$} at 167 66
  \pinlabel {$b$} at 191 90
  \pinlabel {$c$} at 215 114
  \pinlabel {$d$} at 239 138
  
  \pinlabel {$\phi(a)$} at 163 134
  \pinlabel {$\phi(b)$} at 187 110
  \pinlabel {$\phi(c)$} at 228 90
  \pinlabel {$\phi(d)$} at 252 66  
         \endlabellist
\includegraphics[scale=1]{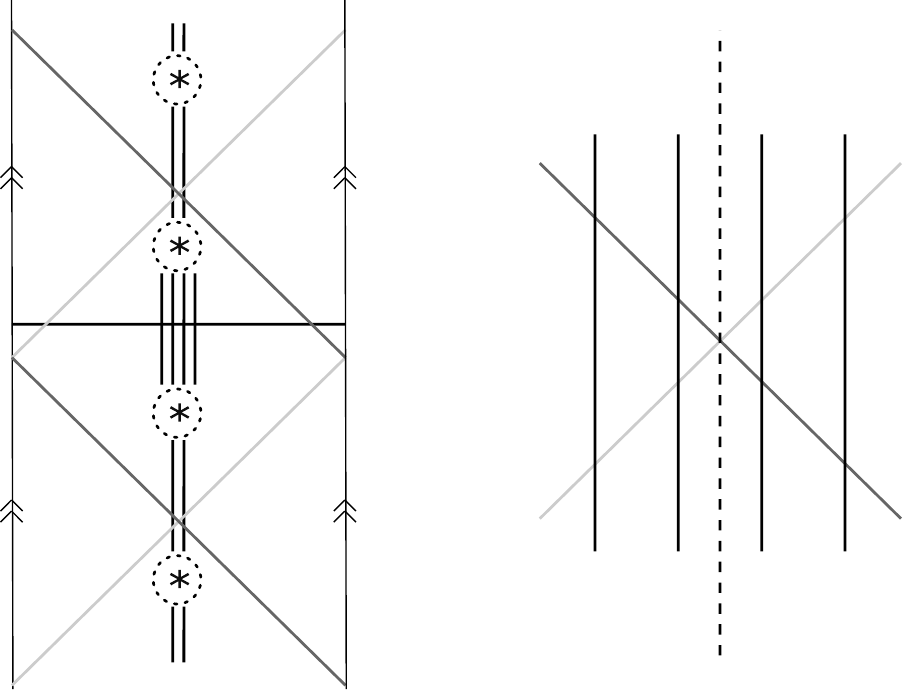}

\caption{The intersection of $\hatGamma$ (black) with a line $\ell_{p,q}^i$ (light gray) and it's vertical reflection $\ell_{p,-q}^i$ (dark gray). We assume that $\gamma_0$ is a simple horizontal curve, and that outside of a neighborhood of the marked points the remaining immersed curves in $\hatGamma$ consist of parallel vertical segments. The map $\phi_i$ takes an intersection of $\ell_{p,q}$ with a vertical segment to the nearby intersection of $\ell_{p,-q}^i$ with that segment, as pictured to the right.}
\label{fig:reflection-map}
\end{figure}

We often need to distinguish the generators in $\HFhat(Y_\pm)$ that come from intersections on vertical segments of $\hatGamma$ from the distinguished generators come from intersections on $\gamma_0$, which we denote by  $x_0^i$ in the $i$th spin$^c$ structure. Mimicking the standard notation for $\HFplus$, we will use $\HFhat_{red}(Y_\pm, i)$ to denote the summand of $\HFhat(Y_\pm, i)$ obtained by removing the generator $x_0^i$, and $\HFhat_{red}(Y_\pm)$ to denote $\bigoplus_{i\in\Zp} \HFhat_{red}(Y_\pm, i)$. Of course, reduced Floer homology in the hat setting does not make sense in general, but in this case where we know that the distinguished generator $x_0^i$ gives the $d$-invariant and corresponds to the tower in $\HFplus$, the analogy is appropriate.

For each generator $x$ of $\HFhat_{red}(Y_+)$, we are  are interested in computing both $\mrel(x)$ and $\mrel(\phi(x))$ (for the distinguished generators $x_0^i$, both quantities are 0 by definition). $x$ corresponds to an intersection point between $\ell_{p,q}^i$ for some $i$ and a vertical segment of $\hatGamma$. There are two integers we will associate with such an intersection point $x$. First, let $A(x)$ denote the height of the relevant vertical segment. Second, after a slight perturbation we can assume that the vertical segment containing $x$ lies exactly on the vertical line $\mu$ through the marked points of $\cylinder$ and that the intersection point $x$ can be viewed as an intersection point of $\mu$ and $\ell_{p,q}^i$; let $k(x)$ denote the number of marked points, counted with multiplicity, in the interior of the triangle formed by $\mu$, $\ell_{p,q}^i$, and $\gamma_0$. It is easiest to picture this triangle in the covering space $\plane$, as shown in Figure \ref{fig:triangle}. Using these quantities, we can compute the effect that the map $\phi$ has on the relative grading of $x$.

\begin{figure}
\labellist
  \pinlabel {$x$} at 152 135
  \pinlabel {$\gamma_0$} at 90 35
  \pinlabel {{\color{gray}$\ell_{p,q}^i$}} at 86 100
         \endlabellist
\includegraphics[scale=1]{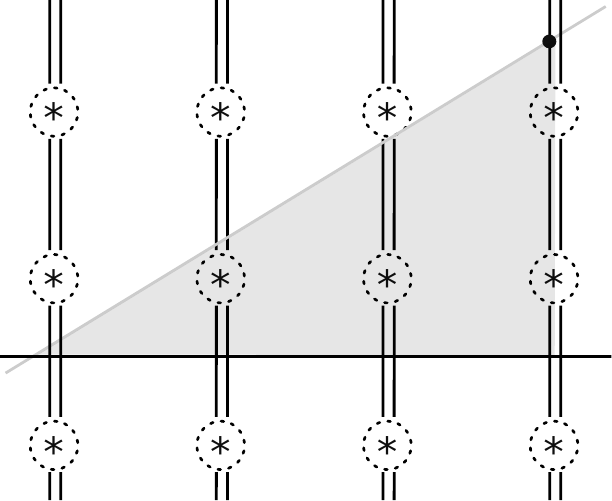}

\caption{The generator $x$ corresponding to the marked intersection point has $A(x) = 2$ since the intersection lies on a vertical segment of height 2, and $k(x) = 2$ since the interior of the shaded triangle covers two marked points.}
\label{fig:triangle}
\end{figure}

\begin{proposition}
\label{prop:grading-change}
For a generator $x \in \HFhat(Y_+,i)$ corresponding to an intersection of $\ell^i_{p/q}$ with a vertical segment in $\hatGamma$, let $A(x)$ and $k(x)$ be the quantities defined above. Then
$$\Delta_{rel}(x) := \mrel(\phi(x)) - \mrel( x ) = 1 - 2 |A(x)| - 4k(x).$$
\end{proposition}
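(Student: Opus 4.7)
The plan is to compute $\mrel(x)$ and $\mrel(\phi(x))$ separately via Definition \ref{def:maslov-grading-difference}, then take their difference. Lifting to the universal cover $\plane$, I would fix lifts so that $\gamma_0$ is the line $y = 0$, $x_0^i \in Y_+$ lifts to $(a, 0)$, and $x$ lifts to $(n, y_x)$ with $y_x \in (A(x) - \tfrac{1}{2}, A(x) + \tfrac{1}{2})$. Since $\ell_{p,-q}^i$ is the reflection of $\ell_{p,q}^i$ across $\mu$, the corresponding points for $Y_-$ are $x_0^i = (-a, 0)$ and $\phi(x) = (-n, y_x)$.

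The main construction is a pair of triangular closed loops $P_\pm$ in $\plane$: each consists of a segment of $\gamma_0$ from $(\pm a, 0)$ to $(\pm n, 0)$, a vertical portion on the lift $x = \pm n$ of $\mu$ from $(\pm n, 0)$ to $(\pm n, y_x)$ (realized by vertical segments of $\hatGamma$ perturbed slightly outward, with grading arrows used to cross each intermediate marked point), and a return along $\tilde\ell_\pm$. These triangles are related by reflection across $\mu$ in their geometry, though not in the choice of grading arrows used.

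Applying Definition \ref{def:maslov-grading-difference} to each loop yields three contributions to compare. The rotation contribution comes from a single smooth quarter turn in each loop where the horizontal portion meets the vertical, which is counterclockwise for $P_+$ and clockwise for $P_-$, giving $\text{rotation}(P_+) = 1/4$ and $\text{rotation}(P_-) = -1/4$ and contributing $+1$ to $\Delta_{rel}$. The winding contribution arises once the vertical side is perturbed outward: each triangle then encloses the $k(x)$ marked points strictly in its interior together with the $|A(x)|$ marked points on $\mu$ between heights $0$ and $y_x$, and the opposite loop orientations give $\text{winding}_w(P_\pm) = \pm(k(x) + |A(x)|)$, contributing $-4(k(x) + |A(x)|)$. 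Finally, the weights accumulated by $P_{1,+}$ and $P_{1,-}$ come from grading arrows sitting on opposite sides of the $|A(x)|$ marked points along the way; applying the consistency condition \eqref{eq:maslov-consistency} to a small loop in $\curveset_{gr}$ encircling a single marked point forces the weights on opposite sides to differ by $\pm 1$, and careful sign tracking yields $\text{weights}(P_-) - \text{weights}(P_+) = |A(x)|$, contributing $+2|A(x)|$.

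Summing gives $\Delta_{rel}(x) = 1 - 4(k(x) + |A(x)|) + 2|A(x)| = 1 - 2|A(x)| - 4k(x)$, as claimed. The hardest step will be the weight analysis: one must track signs via the consistency condition and verify that the answer is independent of the many choices in the construction (which grading arrows to cross, which side to perturb, how to thread through multiple curve components), an independence that follows from the well-definedness of $\mrel(x)$ and $\mrel(\phi(x))$ so that any inconsistency in the bookkeeping must cancel between $P_+$ and $P_-$.
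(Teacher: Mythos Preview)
Your overall strategy---compute $\mrel(x)$ and $\mrel(\phi(x))$ separately via Definition~\ref{def:maslov-grading-difference} and subtract---is the same as the paper's, but your path construction has a genuine gap. You build the vertical side of each triangle ``realized by vertical segments of $\hatGamma$ perturbed slightly outward, with grading arrows used to cross each intermediate marked point.'' This presupposes that $\hatGamma$ has a vertical segment at every height between $0$ and $A(x)$, which need not hold: for instance, if $\hatGamma$ consists of a horizontal $\gamma_0$ together with a single simple figure eight at height $2$, there are no vertical segments at heights $0$ or $1$, and your path in $\curveset_{\gr}$ does not exist. The consistency argument you sketch for the weight difference likewise depends on small loops in $\curveset_{\gr}$ encircling each intermediate puncture, which again requires curves on both sides of those punctures.

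A second issue is the rotation count. With the convention that intersections are at right angles (as in the proof of Lemma~\ref{lem:maslov-easy-bigons}), a bigon-shaped loop with two right-angle corners and no cusps has smooth rotation $\tfrac12$, not $\tfrac14$: total rotation $2\pi$ minus two corners of $\pi/2$ each leaves $\pi$. Your claim of $\pm\tfrac14$ happens to pair with your weight claim to give the right final answer, but at least one of these is off, and the hand-wavy weight analysis means you cannot tell which.

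The paper sidesteps all of this by exploiting the freedom to choose grading arrows: it picks a \emph{single} grading arrow of weight $m$ running directly from $\gamma_0$ to the bottom of the particular vertical segment containing $x$, bypassing all intermediate heights. Both bigons (one smooth for $\mrel(x)$, one cusped for $\mrel(\phi(x))$) use this same arrow, so $m$ cancels automatically in the difference and no weight bookkeeping is needed. Lemma~\ref{lem:maslov-easy-bigons}(b) and (c) then give the two grading differences immediately. If you rework your argument using a single freely chosen grading arrow rather than a chain through intermediate segments, it collapses to the paper's proof.
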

\begin{proof}
We will assume that $A(x) \ge 0$; if $A(x) < 0$, the proof is exactly the same with all pictures rotated 180 degrees and $A(x)$ replaced with $|A(x)|$. We will only work with $\gamma_0$ and the vertical segment containing $x$, and will ignore the rest of $\hatGamma$. Up to perturbing $\hatGamma$ we may assume that the vertical segment in question lies exactly on $\mu$, so that the points $x \in \hatGamma \cap \ell_{p,q}^i$ and $\phi(x) \in \hatGamma \cap \ell_{p,-q}^i$ coincide; this means that $\ell_{p,q}^i$, $\ell_{p,-q}^i$, and $\gamma_0$ form a triangle. In order to compute $\mrel$, we need the grading decoration on $\hatGamma$. We can assume that the set of grading arrows contains an arrow that lies on the right side of $\mu$ and connects $\gamma_0$ to the bottom end of the vertical segment; this grading arrow carries some integer weight $m$. There are two cases to consider, depending on whether the vertical segment containing $x$ is oriented up or down (note that we always assume $\gamma_0$ is oriented rightward). The first case is pictured on the left of Figure \ref{fig:grading-change}; in this case the grading arrow goes from $\gamma_0$ to the right side of the vertical segment. There is a bigon from $x_0^i$ to $x$, shaded dark gray in the figure, which has no cusps, covers $k(x) + A(x)$ punctures, and whose boundary runs over the grading arrow labeled by $m$. By Lemma \ref{lem:maslov-easy-bigons}(b)
$$\mrel(x) = M(x) - M(x_0^i) = -1 + 2k(x) + 2A(x) + 2m.$$
The complement of this region within the triangle formed by $\ell_{p,q}^i$, $\ell_{p,-q}^i$, and $\gamma_0$, shaded light gray in Figure \ref{fig:grading-change}, is a cusped bigon from $\phi(x)$ to $\phi(x_0^i)$. This bigon covers $k(x)$, and its boundary runs over the grading arrow backwards and has a single cusp, at the tail of the grading arrow. It follows from Lemma \ref{lem:maslov-easy-bigons}(c) that
$$\mrel(\phi(x)) =  M(\phi(x)) - M(\phi(x_0^i) = - 2k(x) + 2m.$$
Thus $\Delta_{rel}(x) = \mrel(\phi(x)) - \mrel(x) = 1 - 2A(x) - 4k(x)$, as desired. Note that the label of the grading arrow cancels out and does not end up affecting $\Delta_{rel}(x)$.

In the case that the vertical segment is oriented downward, the grading arrow must go to the left side of the vertical segment to be consistent with the orientations. The right side of Figure \ref{fig:grading-change} shows the modified grading arrow we will use. The only difference is that the boundary of the dark gray bigon from $x_0^i$ to $x$ now has one cusp while the bigon from $\phi(x)$ to $\phi(x_0^i)$ can be drawn with no cusps. This change adds one to $\mrel(x)$ and also adds one to $\mrel(\phi(x))$, so it does not affect $\Delta_{rel}(x)$.
\end{proof}

\begin{figure}
\labellist
  \pinlabel {$x$} at 158 124
  \pinlabel {$x_0^i$} at 10 36
  \pinlabel {$\phi(x_0^i)$} at 336 36
  \pinlabel {$\gamma_0$} at 90 22
  \pinlabel {{\color{black}$\ell_{p,q}^i$}} at 90 88
  \pinlabel {{\color{black}$\ell_{p,-q}^i$}} at 245 88
  \pinlabel {{\color{gray}$m$}} at 188 70
  \pinlabel {{\color{gray}$m$}} at 416 70

         \endlabellist
\includegraphics[scale=1]{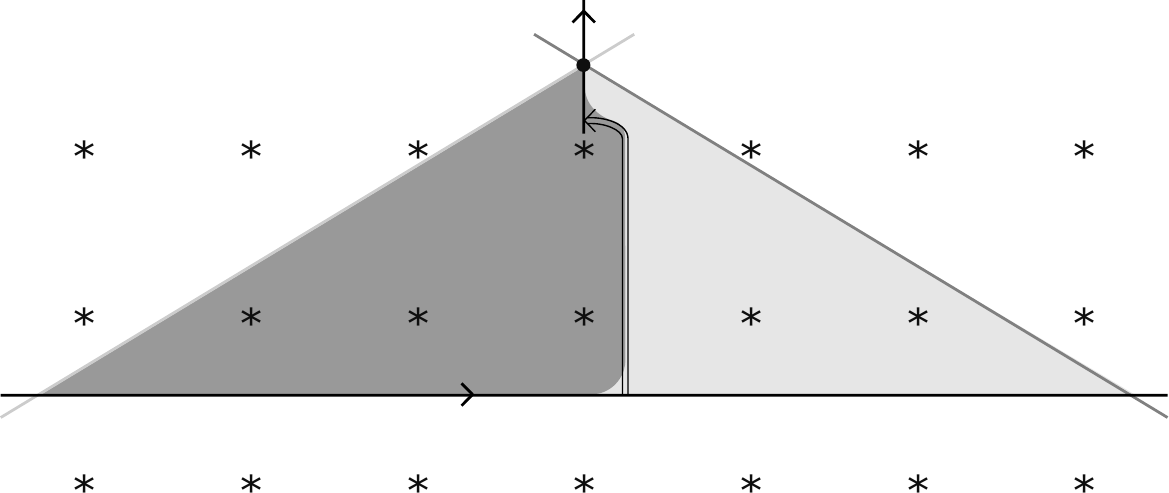} \hspace{1 cm}
\includegraphics[scale=1]{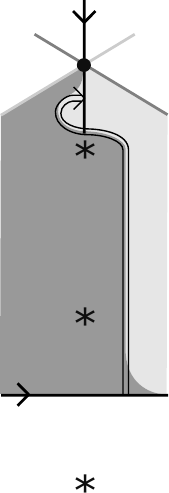}

\caption{Left: A computation of $\Delta_{rel}(x)$. The dark shaded bigon (which has no cusps) can be used to compute $\mrel(x)$, while the lightly shaded region is a cusped bigon used to compute $\mrel(\phi(x))$. Note that the union of these Right: The modification to the diagram needed if the vertical segment is oriented down rather than up.}
\label{fig:grading-change}
\end{figure}

Note that the triangle formed by $\ell_{p,q}^i$, $\ell_{p,-q}^i$, and $\gamma_0$ covers $|A(x)| + 2k(x)$ punctures, so Proposition \label{prop:grading-change} says that $\Delta_{rel}(x)$ is one minus twice the number of punctures covered by this triangle. The number of punctures covered by the triangle is nonnegative. Moreover $k(x) = 0$ if $A(x) = 0$; it follows that $\Delta_{rel}(x) = 1 $ if and only if $A(x) = 0$, and $\Delta_{rel}(x) < 0$ otherwise. An immediate corollary of this is a reproof of a result of Wang:

\begin{corollary}\cite[Theorem 1.3]{Wang}\label{cor:genus1}
If $g(K) = 1$, then $K$ does not admit truly cosmetic surgeries.
\end{corollary}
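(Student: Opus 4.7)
The plan is to combine the grading change formula of Proposition~\ref{prop:grading-change} with the constraint that $Y_+ \cong Y_-$ forces the sum of grading changes to vanish. By Theorem~\ref{thm:NiWu-enhanced}, any truly cosmetic pair on $K$ has $\CFKinfty(K)$ with an isolated generator (so $\gamma_0$ is horizontal) and slopes of the form $\pm p/q$ for coprime $p,q>0$. This puts us in the setup of this section, with the ungraded identification $\phi \colon \HFhat(Y_+) \to \HFhat(Y_-)$ and the relation $\Delta_{rel}(x) = \mrel(\phi(x)) - \mrel(x)$. Since $Y_+ \cong Y_-$ implies that $\mrel(Y_+) = \mrel(Y_-)$ as multisets, we obtain $\mrelsum(Y_+) = \mrelsum(Y_-)$, and in particular $\sum_x \Delta_{rel}(x) = 0$.

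For $g(K) = 1$ every vertical segment of $\hatGamma(K)$ sits at height $|A(x)| \le 1$. The preceding discussion shows $\Delta_{rel}(x) = 1$ exactly when $A(x) = 0$ and $\Delta_{rel}(x) < 0$ otherwise. Because the triangle appearing in Proposition~\ref{prop:grading-change} has vertical extent at most $1$ and, for the slopes allowed by $q^2 \equiv -1 \pmod p$, contains no pegs off the central column of $\mu$, we get $k(x) = 0$ and therefore $\Delta_{rel}(x) = -1$ whenever $|A(x)| = 1$. Summing over all $p$ spin$^c$ structures, each of the $n_0$ segments at height $0$ contributes $q$ intersection points with $\Delta_{rel} = +1$, while each of the $n_1 + n_{-1} = 2n_1$ segments at heights $\pm 1$ (equal by the $\pi$-symmetry of property~(IV)) contributes $q$ intersection points with $\Delta_{rel} = -1$. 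The vanishing sum becomes $q(n_0 - 2n_1) = 0$, i.e., $n_0 = 2 n_1$.

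I then combine this relation with Theorem~\ref{thm:Boyer-Lines}: $\Delta_K''(1) = 0$. Any knot of genus one has $\Delta_K(t) = at - (2a-1) + at^{-1}$, so $\Delta_K''(1) = 2a = 0$ forces $a = 0$ and $\Delta_K(t) = 1$. For $K$ nontrivial of genus one, $\HFKhat(K,1) \ne 0$, so $n_1 \ge 1$ and $n_0 \ge 2$; moreover the coefficient $a = 0$ equals the signed count of height-one intersections weighted by $(-1)^M$, forcing the $n_1$ intersections at height one to split evenly between even and odd Maslov gradings. The final step is to see that the combination of $n_0 = 2n_1 > 0$, trivial Alexander polynomial, this parity balance, the $\pi$-rotational symmetry of property~(IV), and the equal-enclosure condition of property~(III) leaves no configuration for the components of $\hatGamma(K) \setminus \gamma_0$, so $\hatGamma(K) = \gamma_0$, contradicting our assumption that $K$ is nontrivial.

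I expect the main obstacle to be carrying out the last bookkeeping step cleanly: each non-$\gamma_0$ component is a figure-eight-type curve enclosing equally many pegs on either side, and one must verify that no multicurve of this shape can simultaneously satisfy $n_0 = 2n_1 > 0$, $\Delta_K(t) = 1$, and the Maslov parity balance at heights $\pm 1$. The most delicate part is ruling out configurations in which cancellations at height one are realized jointly by several curves whose contributions to $n_0$ and $n_1$ conspire to match the required relation without any individual component being trivial.
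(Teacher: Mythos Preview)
Your argument goes off the rails at the very first geometric claim: for $g(K)=1$ the vertical segments of $\hatGamma(K)$ are \emph{all} at height $0$, not at heights $|s|\le 1$. Recall from Section~\ref{sec:invts-from-gamma} that a vertical segment at height $s$ connects the peg at $s-\tfrac12$ to the peg at $s+\tfrac12$, while the genus is the maximum height of an intersection with $\mu$ (i.e.\ the maximum Alexander grading). The paper records the relationship explicitly in the proof of Theorem~\ref{thm:large-slopes}: the maximum $s$ with $n_s\neq 0$ is $g(K)-1$. So when $g(K)=1$ we have $n_s=0$ for all $s\neq 0$. You have conflated the height of intersection points with $\mu$ (which can reach $g$) with the height of vertical segments (which can only reach $g-1$).

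Once this is corrected the proof collapses to one line, exactly as in the paper: every generator $x$ of $\HFhat_{red}(Y_+)$ has $A(x)=0$, hence $\Delta_{rel}(x)=+1$ by Proposition~\ref{prop:grading-change}; since $K$ is nontrivial there is at least one such generator, so $\mrelsum(Y_-)\neq\mrelsum(Y_+)$, contradicting $Y_+\cong Y_-$. Your detour through Boyer--Lines, the relation $n_0=2n_1$, and the unfinished ``bookkeeping step'' is all an artifact of the height confusion. In particular the step you flagged as the main obstacle---ruling out multicurve configurations with $n_0=2n_1>0$---is a problem you created for yourself; there is nothing to rule out because $n_1=0$ automatically. (Separately, your justification that $k(x)=0$ for $|A(x)|=1$ via ``the slopes allowed by $q^2\equiv -1\pmod p$'' is not correct in general: for $p=1$ and $q>1$ the triangle can contain pegs.)
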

\begin{proof}
If $K$ admits a truly cosmetic surgery, then by Theorem \ref{thm:NiWu-enhanced} we may assume that the slopes are opposite and $\CFKinfty(K)$ has an isolated generator. Let  $Y_+ = S^3_{p/q}(K)$ and $Y_- = S^3_{-p/q}(K)$, and define $\phi:\HFhat(Y_+) \to \HFhat(Y_-)$ as above. If $g(K) = 1$, then all vertical segments in $\hatGamma$ are at height zero. It follows that every intersection $x$ of any line of slope $p/q$ with a vertical segment of $\hatGamma$ has $A(x) = 0$, and thus has $\Delta_{rel}(x) = 1$. That is, every generator of $\HFhat_{red}(Y_+)$ has its relative grading increase under $\phi$. Since $K$ is nontrivial, $\hatGamma$ has at least one vertical segment so $\HFhat_{red}(Y_+)$ is nontrivial. Thus $\mrelsum(Y_+) > \mrelsum(Y_-)$, and $Y_+ \not\cong Y_-$.
\end{proof}

The above Corollary demonstrates how powerful Proposition \ref{prop:grading-change} can be; we will use this Proposition to derive several more restrictions on the multicurve $\hatGamma$. For example, Corollary \ref{cor:genus1} follows from the fact that if too many vertical segments are at height zero, the sum of all gradings will increase when $\phi$ is applied. On the other hand, it is easy to see that if fewer than half of all vertical segments are at height zero, then the sum of all gradings will decrease. To make this intuition more precise, recall that $n_s$ denotes the number of vertical segments of $\hatGamma$ at height $s$; by the rotational symmetry of $\hatGamma$, $n_{-s} = n_s$. The number of generators $x$ in $\HFhat_{red}(Y_\pm)$ with $A(x) = s$ is $q \cdot n_s$. Since $k(x)$ is always non-negative, Proposition \ref{prop:grading-change} implies that $\Delta_{rel}(x) \le 1-2|s|$ when $A(x) = s$. It follows that if $\mrelsum(Y_-) - \mrelsum(Y_+) = 0$, then
\begin{equation}\label{eq:n0-bound}
n_0 \ge \sum_{s \neq 0} n_s(2|s|-1) = \sum_{s=1}^\infty 2n_s(2s-1) = 2n_1 + 6n_2 + 10n_3 + \cdots
\end{equation}

Armed with this information, we can show that large surgery slopes can never give truly cosmetic surgeries.

\begin{theorem}\label{thm:large-slopes}
Let $K$ be a nontrivial knot in $S^3$. If $Y_+ = S^3_{p/q}(K)$ and $Y_- = S^3_{-p/q}(K)$ are diffeomorphic and $p/q > 1$, then $p/q = 2$, $g(K) = 2$, and $n_0 = 2n_1$.
\end{theorem}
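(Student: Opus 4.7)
My plan is to combine the graded sum identity (forced by $\mrelsum(Y_+) = \mrelsum(Y_-)$) with spin$^\mathrm{c}$-level rank comparisons pinned down by matching $d$-invariants. From Proposition \ref{prop:grading-change}, summing $\Delta_{rel}(x) = 1 - 2|A(x)| - 4k(x)$ over all generators on vertical segments (noting that the distinguished $\gamma_0$ generators contribute zero) gives
\[
n_0 \;=\; 2\sum_{s \ge 1}(2s-1)\,n_s \;+\; \tfrac{4}{q}\sum_x k(x),
\]
where $k(x)\ge 0$. A direct lattice-point computation shows that for $p/q > 1$ the triangle defining $k(x)$ has horizontal width $sq/p < s$, so $k(x)$ is often zero; for instance, when $p/q = 2$ one has $k(x) = 0$ whenever $|A(x)| \le 2$. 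This identity alone is compatible with many slopes, so additional rigidity is needed.

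The additional rigidity comes from Theorem \ref{thm:surgery-formula}, which lets one compute $\rk \HFhat(Y_\pm, i)$ in terms of which vertical segments are hit by the line $\ell_{p,\pm q}^i$. Explicitly, in each spin$^\mathrm{c}$ structure of $Y_+$ one obtains a contribution of $1$ (from $\gamma_0$) plus the number of vertical segments whose height $s$ satisfies $i \in \{qs, qs+1, \dots, qs+q-1\} \pmod p$; for $Y_-$ one replaces $q$ by $-q$. Under an orientation-preserving diffeomorphism $Y_+ \cong Y_-$ the spin$^\mathrm{c}$ structures are permuted by some $\sigma\co\Zp\to\Zp$; by Proposition \ref{prop:d-invariant}, $d(Y_\pm, i) = d(L(p,\pm q), i)$, so the permutation is pinned down by the lens space $d$-invariants, and Theorem \ref{thm:NiWu-enhanced} forces $q^2 \equiv -1 \pmod p$ together with $\sigma$ being (essentially) multiplication by $q$ on $\Zp$.

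The proof then proceeds by case analysis. For $p/q = 2$ (so $p = 2$, $q = 1$), $\sigma$ is the nontrivial involution on $\Z_2$, and the rank equation $\rk \HFhat(Y_+, 0) = \rk\HFhat(Y_-, 1)$ combined with the sum identity above immediately forces $n_s = 0$ for all $s \ge 2$; since $K$ is nontrivial, the maximum height achieved is $s = 1$, giving $g(K) = 2$ and $n_0 = 2n_1$. For $p/q > 2$, the rank-matching equations under the cyclic action of $\sigma$ on $\Zp$ become an overdetermined system whose only solution is $n_s \equiv 0$, contradicting nontriviality of $K$. The main obstacle is the uniform exclusion of $p/q > 2$: one must verify that the cyclic permutation $\sigma(i) = qi \pmod p$ cannot balance the partition of heights modulo $p$ coming from the explicit formula in the previous paragraph for any nontrivial $\hatGamma$, rather than just small examples such as $p/q = 5/2$ where the system collapses immediately to $n_0 = 2n_1$, $n_0 = n_1$, and $n_1 = 0$.
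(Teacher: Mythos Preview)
Your proposal has a genuine gap at the step where you claim the permutation $\sigma$ is ``pinned down by the lens space $d$-invariants'' and is ``(essentially) multiplication by $q$ on $\Zp$.'' The $d$-invariants of $L(p,q)$ need not all be distinct (for instance, conjugation symmetry of spin$^c$ structures already forces repetitions), so the condition $d(L(p,q),i) = -d(L(p,q),\sigma(i))$ does not in general determine $\sigma$ uniquely. Without a fixed $\sigma$, your ``overdetermined system'' of rank-matching equations under a cyclic action does not materialize; you yourself flag this as the main obstacle, and indeed it is not clear the argument can be completed along these lines. You also omit the range $1 < p/q < 2$ entirely.

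The paper avoids pinning down $\sigma$. Instead it argues as follows. The sum identity \eqref{eq:n0-bound} forces $n_0 \ge 2n_1 + 6n_2 + \cdots$. Since $p/q > 1$, the lines $\ell_{p,q}^i$ with $0 \le i < q$ hit the height-zero segments while those with $q \le i < p$ do not; combined with \eqref{eq:n0-bound} this shows that if some $n_s > 0$ with $s \ge 2$, the first $q$ spin$^c$ structures have strictly larger $\HFhat$-rank than the remaining $p-q$, so $\sigma$ must preserve the set $\{0,\dots,q-1\}$. That forces $\sum_{i=0}^{q-1} d(L(p,q),i) = 0$, which is ruled out by the number-theoretic Lemma~\ref{lem:first-q}. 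This lemma is the key ingredient you are missing. In the remaining case $n_s = 0$ for $s \ge 2$ (hence $g(K)=2$ and $k(x)=0$ throughout), the paper classifies spin$^c$ structures into five types by which of the heights $\{-1,0,1\}$ are hit, observes that $\sigma$ must fix each type by rank, and checks that types (b) and (d) have strictly nonzero $\sum_x \Delta_{rel}(x)$ within a single spin$^c$ structure; since type (b) occurs whenever $1 < p/q < 2$ and type (d) whenever $p/q > 2$, only $p/q = 2$ survives.
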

\begin{proof}

Suppose $Y_+ \cong Y_-$. In particular, there is some permutation $\sigma$ on $\Zp$ such that $\HFhat(Y_+, i) \simeq \HFhat(Y_-, \sigma(i))$ as graded vector spaces. Note that $\sigma$ can only permute spin$^c$ structures with the same rank of $\HFhat$, since
$$\HFhat(Y_+, \sigma(i)) \simeq \HFhat(Y_-, \sigma(i)) \simeq \HFhat(Y_+, i)$$
as \emph{ungraded} vector spaces, where the first isomorphism is given by $\phi_{\sigma(i)}$. Because $\tfrac p q > 1$, any line of slope $\tfrac p q$ intersects any vertical segment of $\hatGamma$ at most once. In particular, for $q \le i < p$, the line $\ell_{p,q}^i$ does not hit the vertical segments at height zero at all, while for $0 \le j < q$ the line $\ell_{p,q}^j$ does hit the vertical segments at height zero. Thus we observe that for $j < q$ and $i \ge q$, 
$$\rk \HFhat(Y_\pm, j) \ge 1 + n_0, \quad \text{ and } \quad \rk \HFhat(Y_\pm, i) \le 1 + \sum_{s=1}^\infty 2 n_s.$$

From \eqref{eq:n0-bound}, we have that 
$$\rk \HFhat(Y_\pm, j) - \rk \HFhat(Y_\pm, i) \ge \left(n_0 - \sum_{s=1}^\infty 2n_s\right) \ge \left(\sum_{s=1}^\infty 2n_s(2s-2)\right) = \left( 4n_2 + 8 n_3 + \cdots \right)$$
This difference in dimensions is strictly positive unless $n_s = 0$ for all $s > 1$.

First suppose that $n_s > 0$ for some $s > 1$, so that the above difference is positive for any $j<q$ and $i\ge q$. The spin$^c$ structures of $Y_+$ and $Y_-$ can thus be divided by rank into two subsets, with one set having the $q$ largest dimensions of $\HFhat$ and the other set having the $p-q$ smallest dimensions of $\HFhat$, and for both $Y_+$ and $Y_-$, these subsets are $\{0,\ldots, q-1\}$ and $\{q, \ldots, p-1\}$. The permutation $\sigma$ must fix these two sets; in other words, the first $q$ spin$^c$ structures of $Y_+$ must correspond to the first $q$ to spin$^c$ structures of $Y_-$ under any isomorphism of $\HFhat(Y_+)$ and $\HFhat(Y_-)$. In particular, the sum of the $d$-invariants of these first $q$ spin$^c$ structures must agree. We have
$$\sum_{i=0}^{q-1} d(L(p,q),i) = \sum_{i=0}^{q-1} d(Y_+, i) = \sum_{i=0}^{q-1} d(Y_-, i) = \sum_{i=0}^{q-1} d(L(p,-q),i) = \sum_{i=0}^{q-1} -d(L(p,q),i).$$
It follows that the sum must be zero, but this is impossible by Lemma \ref{lem:first-q}.

Now suppose that $n_s = 0$ for all $s>1$. It follows that $g(K) = 2$, since the maximum $s$ for which $n_s \neq 0$ is $g(K) -1$, and $g(K)$ cannot be 1 by Corollary \ref{cor:genus-one}. Since $p/q > 1$, $k(x) = 0$ for any $x \in \HFhat_{red}(Y_+)$ (i.e. the relevant triangle  does not cover any marked points). By Proposition \ref{prop:grading-change}, 
$$\Delta_{rel}(x) = \begin{cases} \phantom{-}1 & A(x) = 0 \\ -1 & A(x) = \pm 1 \end{cases}$$
These grading changes must cancel when we sum over all generators of $\HFhat_{red}(Y_+)$, which implies that $n_0 = n_1 + n_{-1} = 2n_1$.

There are five possible values of $\rk \HFhat(Y_+,i)$ depending on which vertical segments $\ell_{p,q}^i$ intersects, as shown in the table below (see also Figure \ref{fig:slope-4_3} for an example with a particular slope); this partitions the set of  spin$^c$ structures into five subsets. For each type, we can also compute the net change in relative grading, the sum of $\Delta_{rel}(x)$ over all generators $x$ of $\HFhat_{red}(Y_+, i)$.
\begin{center}
\renewcommand{\arraystretch}{1.2} 
\begin{tabular}{lcll}
  & heights of vertical segments hit by $\ell_{p,q}^i$ & $\rk \HFhat(Y_+, i)$ & $\sum_x \Delta_{rel}(x)$ \\
\toprule
(a) & $\{1,0,-1\}$ & $1 + 4qn_1$ & 0\\
(b) & $\{1,0\}$ or $\{0,-1\}$ & $1 + 3qn_1$ & $qn_1$\\
(c) & $\{1,-1\}$ or $\{0\}$ & $1 + 2qn_1$ & $2qn_1$ or $-2qn_1$ \\
(d) & $\{1\}$ or $\{-1\}$ & $1 + qn_1$ &  $-qn_1$\\
(e) & $\{\}$ & $1$ & 0\\
\midrule
\end{tabular}
\end{center} 
Because $\sigma$ can only permute spin$^c$ structures with the same rank, $\sigma$ must fix the subsets of spin$^c$ structures corresponding to these five types. However, for any spin$^c$ structure of type (b), the total relative grading strictly increases when $\phi$ is applied; it follows that there can be no spin$^c$ structures of type (b). There can also be no spin$^c$ structures of type (d) for similar reasons. But if $p/q > 2$ there is at least one spin$^c$ structure of type (d), namely the one defined by $i = q$, and if $1 < p/q < 2$ there is at least one spin$^c$ structure of type (b), namely $i = 0$. Either case gives a contradiction, so $p/q$ must be 2.
\end{proof}

\begin{figure}
\labellist
  \pinlabel {$\gamma_0$} at 45 91
  \pinlabel {$\ell^0_{4,3}$} at 280 155
  \pinlabel {$\ell^1_{4,3}$} at 280 170
  \pinlabel {$\ell^2_{4,3}$} at 280 180
  \pinlabel {$\ell^3_{4,3}$} at 280 190
         \endlabellist
\includegraphics[scale=1.4]{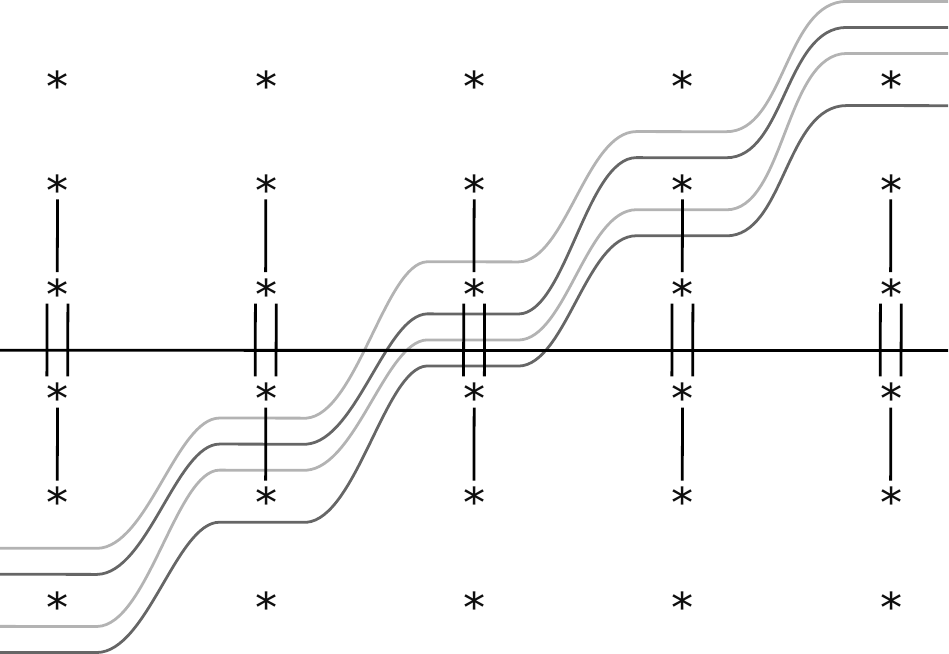}
\vspace{5 mm}
\caption{Intersecting $\hatGamma$ with four lines of slope $4/3$, assuming the knot has genus two with $n_0 = 2n_1$. Each vertical line in the figure represents $n_1$ vertical segments in $\hatGamma$. The spin$^c$ structures $i \in \{0,1,2,3\}$ have types (b), (a), (b), and (c), respectively, as described in the proof of Theorem \ref{thm:large-slopes}.}
\label{fig:slope-4_3}
\end{figure}

Small slopes can be dealt with in a similar way. Note that for $p/q < 1$, the constant $k(x)$ in Proposition \ref{prop:grading-change} is at least $\tfrac{s(s-1)}{2}$, where $s = |A(x)|$. Moreover, $k(x)$ is strictly larger than this for at least one intersection point $x$ with $|A(x)| = s$, provided there are any intersection points with $|A(x)| = s$. Since we require that $\mrelsum(Y_-) - \mrelsum(Y_+) = 0$, it follows that there exist constants $a_s \in \Q$ with $a_{-s} = a_s$ and $a_s > 2s^2 - 1$ for $s > 0$, such that
\begin{equation}\label{eq:n0-bound2}
n_0 = \sum_{s \neq 0} n_s a_s = \sum_{s=1}^\infty 2n_s a_s > 2n_1 + 14n_2 + 34n_3 + \cdots
\end{equation}
The constants $a_s$ could be computed exactly for any fixed $p/q$, but we will not need this; in fact, we will only need that $a_s > 1$ for all $s>0$.

\begin{theorem}\label{thm:small-slopes}
Let $K$ be a nontrivial knot in $S^3$. If $Y_+ = S^3_{p/q}(K)$ and $Y_- = S^3_{-p/q}(K)$ are diffeomorphic and $p/q < 1$, then $p=1$.
\end{theorem}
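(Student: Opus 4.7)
The plan is to follow the two-case structure of the proof of Theorem \ref{thm:large-slopes}, suitably adapted to small slopes. Assume for contradiction that $Y_+ \cong Y_-$ with $p \geq 2$ and $p/q < 1$, so in particular $q > p$. Writing $q = kp + r$ with $0 < r < p$ (here $r > 0$ since $\gcd(p,q) = 1$ and $p \geq 2$), a careful count of the heights $-1/2 + (i + jp)/q + \epsilon$ shows that for each $i \in \Zp$ and each height $s$, the line $\ell^i_{p,q}$ intersects a vertical segment of $\hatGamma$ at height $s$ in either $k$ or $k+1$ points, the latter occurring precisely when $(i - sq) \bmod p \in \{0, \ldots, r-1\}$. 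This gives the rank of each spin$^c$ summand as an explicit function of $i$, and the isomorphism $Y_+ \cong Y_-$ induces a permutation $\sigma$ of $\Zp$ that preserves both this rank function and the multiset of $d$-invariants.

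The first case to handle is when $\hatGamma$ has enough vertical segments away from height zero (analogous to the case $n_s > 0$ for some $s > 1$ in Theorem \ref{thm:large-slopes}). In this regime rank considerations force $\sigma$ to restrict to a proper subset $S \subsetneq \Zp$, and combining the $d$-invariant surgery formula with $V_0 = H_0 = 0$ from Theorem \ref{thm:NiWu-enhanced} yields an identity of the form $\sum_{i \in S} d(L(p, q), i) = 0$. The plan is then to generalize the computation of Lemma \ref{lem:first-q} to the relevant subset $S$, using the recursive lens space $d$-invariant formula, to show this sum is nonzero whenever $p \geq 2$ and $q^2 \equiv -1 \pmod p$, giving a contradiction.

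In the remaining small-genus case (the analog of the case $n_s = 0$ for $s > 1$ in Theorem \ref{thm:large-slopes}), I would directly analyze grading changes using Proposition \ref{prop:grading-change}. Here, for each intersection $x$ at height $s \neq 0$, the constant $k(x)$ can be computed explicitly: the triangle has base $q|s|/p$ on $\gamma_0$ and contains marked points in a pattern determined by $p$, $q$, and the specific intersection. Summing yields $D_i := \mrelsum(Y_-, i) - \mrelsum(Y_+, i)$ for each spin$^c$ structure, and I would classify spin$^c$ structures into types (analogous to types (a)--(e) in the proof of Theorem \ref{thm:large-slopes}) based on which residues $(i - sq) \bmod p$ they realize. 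Requiring that the multiset $\{D_i\}$ be compatible with a rank-preserving permutation $\sigma$ (together with the match of the full graded multisets $\mrel(Y_+, i) = \mrel(Y_-, \sigma(i))$) should force $p = 1$.

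The main obstacle will be generalizing Lemma \ref{lem:first-q} to the appropriate subset $S$: the sum in that lemma ranged over $\{0, \ldots, q-1\}$, which made sense precisely because $q < p$ in the large-slope setting, whereas for $p/q < 1$ the relevant $\sigma$-invariant subset has a more intricate combinatorial description driven by the residues $sq \bmod p$ for various $s$. Secondarily, the small-genus case (most delicately $g(K) = 2$ with $n_0 = 2n_1$, which is exactly the borderline case of Theorem \ref{thm:main}(ii)) may require tracking refined graded data rather than just $D_i$, since these are the knots where the analysis for slopes $\pm 2$ is also borderline and a rough grading count risks being inconclusive.
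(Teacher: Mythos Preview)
Your plan is in the right spirit but overcomplicates the argument and anticipates obstacles that do not arise. The paper's proof is considerably simpler and requires no case split.

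The key point you are missing is the sharpened inequality \eqref{eq:n0-bound2} derived just before the theorem: for $p/q < 1$ the triangle counting $k(x)$ is at least $\tfrac{|s|(|s|-1)}{2}$ (and strictly greater for some $x$ at each occupied height), so the condition $\mrelsum(Y_-)=\mrelsum(Y_+)$ forces $n_0 > 2\sum_{s\ge 1} n_s$ \emph{uniformly}, with strict inequality. This single inequality is enough to separate the spin$^c$ structures by rank in every case; there is no residual ``small-genus'' regime to analyze by hand, and in particular no analog of the types (a)--(e) analysis is needed.

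Concretely, writing $q = mp + r$ with $0 < r < p$, one checks that $c^i_0 = m+1$ for $0\le i < r$ and $c^j_0 = m$ for $r \le j < p$. Since $c^i_s - c^j_s \ge -1$ for all $s$, the rank difference is at least $n_0 - 2\sum_{s\ge 1} n_s > 0$, so the first $r$ spin$^c$ structures always have strictly larger rank than the last $p-r$. Thus $\sigma$ preserves $\{0,\ldots,r-1\}$ setwise, and the subset $S$ you were worried about is simply $\{0,\ldots,r-1\}$---not anything intricate. Since $L(p,q) = L(p,r)$ with matching spin$^c$ labels (as $q\equiv r \pmod p$), the resulting identity is exactly $\sum_{i=0}^{r-1} d(L(p,r),i) = 0$, and Lemma \ref{lem:first-q} applies verbatim with $(p,r)$ in place of $(p,q)$ because $0<r<p$ and $r^2 \equiv q^2 \equiv -1 \pmod p$. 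No generalization of that lemma is needed.
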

\begin{proof}
Suppose to the contrary that $Y_+ \cong Y_-$ and $p \neq 1$. Let $q = mp + r$ with $0 \le r < p$; since $p$ and $q$ are relatively prime, $r > 0$. Any line of slope $p/q$ hits any vertical segment in $\hatGamma$ either $m$ or $m+1$ times; let $c^i_s \in \{m, m+1\}$ denote the number of times the line $\ell_{p, \pm q}^i$ intersects a vertical segment at height $s$. We have
$$\rk \HFhat(Y_\pm, i) = 1 + \sum_{s = -\infty}^\infty c^i_s n_s. $$
If $0 \le i < r$, it is easy to see that $c^i_0 = m+1$, while if $r \le j < p$ then $c^j_0 = m$. Since $c^i_s - c^j_s \ge -1$ for any $s$,
$$\rk \HFhat(Y_\pm, i) - \rk \HFhat(Y_\pm, j) = \sum_{s = -\infty}^\infty (c^i_s - c^j_s)n_s \ge n_0 - \sum_{s=1}^\infty 2n_s > 0,$$
where the last inequality uses \eqref{eq:n0-bound2}. In other words, the first $r$ spin$^c$ structures have ranks strictly bigger than each of the remaining $(p-r)$ spin$^c$ structures. It follows that the permutation $\sigma$ corresponding to the reindexing of spin$^c$ structures under any isomorphism from $\HFhat(Y_+)$ to $\HFhat(Y_-)$ must preserve the first $r$ spin$^c$ structures as a set. In particular,
$$\sum_{i = 0}^{r-1} d(Y_+, i) = \sum_{i = 0}^{r-1} d(Y_-, i).$$
As in the proof of Theorem \ref{thm:large-slopes}, this implies that 
$$0 = \sum_{i = 0}^{r-1} d(L(p,q),i) =  \sum_{i = 0}^{r-1} d(L(p,r),i). $$
But $r \equiv q \equiv -1 \pmod p$, so this is impossible by Lemma \ref{lem:first-q}.
\end{proof}

When we restrict to $p = 1$ we can compute explicit formulas for the net change in relative grading under $\phi$, and this determines $q$ exactly (for a given knot) if $\pm 1/q$ is a pair of truly cosmetic surgery slopes. 

\begin{proposition}\label{prop:ns-ratio}
Suppose $S^3_{1/q}(K) \cong S^3_{-1/q}(K)$. As above, let $n_s$ be the number of vertical segments in $\hatGamma$ at height $s$. Then
$$q = \frac{n_0 + 2\sum_{s=1}^\infty n_s}{4 \sum_{s = 1}^\infty s^2 n_s}.$$
\end{proposition}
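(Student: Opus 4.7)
The plan is to apply the framework developed earlier in this section. Since $S^3_{1/q}(K)\cong S^3_{-1/q}(K)$, Theorem~\ref{thm:NiWu-enhanced} ensures that $\CFKinfty(K)$ has an isolated generator, so the distinguished component $\gamma_0$ of $\hatGamma$ is horizontal and Proposition~\ref{prop:grading-change} applies. Because $p=1$ there is only one spin$^c$ structure, so the required isomorphism of graded vector spaces $\HFhat(Y_+)\cong\HFhat(Y_-)$ forces the multisets of relative gradings to agree. In particular, summing over the generators $x$ of $\HFhat_{red}(Y_+)$ (equivalently, the intersections of $\ell^0_{1,q}$ with the vertical segments of $\hatGamma$) I need
\[
\sum_x \Delta_{rel}(x)=\sum_x\bigl(1-2|A(x)|-4k(x)\bigr)=0.
\]

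I will arrange $\hatGamma$ in pulled-tight form so that each vertical segment at height $s$ meets $\ell^0_{1,q}$ in exactly $q$ intersection points, one on each vertical line $x=sq,sq+1,\ldots,sq+q-1$ of the universal cover $\plane$. The first two sums are then immediate from $n_{-s}=n_s$: $\sum_x 1=q(n_0+2\sum_{s\ge 1}n_s)$ and $\sum_x |A(x)|=4q\sum_{s\ge 1}s\,n_s$. The heart of the proof is computing $K_{\text{tot}}:=\sum_x k(x)$.

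To evaluate $K_{\text{tot}}$, I lift to $\plane$ with $\gamma_0$ on the line $y=0$, so that a lift of $\ell^0_{1,q}$ is the line $L:y=x/q-1/2+\epsilon$, crossing $\gamma_0$ at $x=q/2-q\epsilon$. The triangle for an intersection on the vertical line $x=j$ is a right triangle with legs on $\mu$ and $\gamma_0$ and hypotenuse on $L$, and $k(x)$ is just the count of lattice points $(a,b-\tfrac12)$ in its interior, which at each $x=a$ reduces to counting integers in an interval of length $\lfloor a/q\rfloor$ (resp.\ $\lceil |a|/q\rceil$ when $s<0$). Summing over $j\in\{sq,\ldots,sq+q-1\}$, after swapping the order of summation and splitting on whether $a$ lies before or after $q/2$, I expect to obtain for $s>0$
\[
K_s:=\sum_{j=sq}^{sq+q-1}k_j \;=\; q^2\cdot\tfrac{s(s-1)}{2}+s\cdot\tfrac{q(q-1)}{2}.
\]
The subtle point is verifying that the analogous parallel count for $s<0$ produces the \emph{same} value $K_{-s}=K_s$; the line $L$ is not symmetric about the origin, so this equality is not apparent and will need a direct check. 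Granting it, $K_{\text{tot}}=2\sum_{s\ge 1}n_s K_s=q^2\sum_{s\ge 1}n_s s^2-q\sum_{s\ge 1}n_s s$.

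Finally, plugging all three sums into $\sum_x(1-2|A(x)|-4k(x))=0$, the $\sum_{s\ge 1}s\,n_s$ terms cancel, leaving $qn=4q^2\sum_{s\ge 1}s^2 n_s$ with $n=n_0+2\sum_{s\ge 1}n_s$, which rearranges to the claimed identity. The main obstacle is the lattice-point computation of $K_{\text{tot}}$: setting up the triangles carefully with respect to the $\epsilon$-perturbation and pushing through the parallel computation at negative heights to confirm $K_{-s}=K_s$; once this is in place, all remaining steps are routine algebra.
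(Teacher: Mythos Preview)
Your approach is essentially the same as the paper's: both compute $\sum_x \Delta_{rel}(x)=0$ by summing the contributions $1-2|A(x)|-4k(x)$ over all intersections of $\ell_{1,q}$ with vertical segments, and both arrive at the same per-segment total $K_s=\tfrac{q^2 s(s-1)}{2}+\tfrac{sq(q-1)}{2}$ (the paper writes $k(x)=\tfrac{qs(s-1)}{2}+is$ for the $i$th intersection and sums over $i$, which is your computation). The paper packages the three terms together as $\sum_i(4k+2s-1)=2q^2s^2-q$ per segment rather than splitting into three sums, but this is only cosmetic.

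Two small points. First, you have a slip: $\sum_x|A(x)|=2q\sum_{s\ge 1}s\,n_s$, not $4q$ (each of the $n_s$ segments at height $s$ and the $n_{-s}=n_s$ at height $-s$ contributes $q\cdot s$). You evidently used the correct value downstream, since the cancellation you assert only works with the factor $2q$. Second, your worry about $K_{-s}=K_s$ is legitimate but easily dispatched: the paper simply invokes symmetry, and indeed the rotation by $\pi$ about the origin (Property~(IV) of $\hatGamma$) takes the triple $(\gamma_0,\ell_{1,q},\text{segment at height }s)$ to the triple $(\gamma_0,\ell_{1,q},\text{segment at height }-s)$ up to a translation of the line, and carries marked points to marked points, so the triangle lattice-point counts agree. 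No separate parallel computation is needed.
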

\begin{proof}
This is a straightforward consequence of Proposition \ref{prop:grading-change} and the fact that $\sum_x \Delta_{rel}(x) = 0$. Note that for slope $1/q$ there are $q$ intersections of line $\ell_{1,q}$ with any vertical segment at height $s > 0$, and if we label these intersections by $i = 0, \ldots, q-1$, the constant $k(x)$ for the $i$th intersection is
$$\left[q(s-1) + q(s-2) + \cdots + q \right] + is = \frac{qs(s-1)}{2} + is,$$
and the sum over all $q$ of these points of $4k(x) + 2A(x) - 1$ is
$$4\left[\frac{q^2s(s-1)}{2} + \frac{sq(q-1)}{2}\right] +2qs - q = 2q^2s^2 - q.$$
By symmetry the contribution to $\sum_x \Delta_{rel}(x)$ of a vertical segment at height $-s$ is he same as that of a vertical segment at height $s$. Each vertical segment at height 0 contributes $q$ intersection points, each with $\Delta_{rel}(x) = 1$. The condition that $\sum_x \Delta_{rel}(x) = 0$ can now be stated as
$$qn_0 - 2\sum_{s = 1}^\infty (2q^2s^2 - q) n_s = 0;$$
solving this equation for $q$ gives the desired result.
\end{proof}
Note that since $n_s = 0$ for $|s| \ge g(K)$, the infinite sums above can be truncated for any particular example.

Another powerful consequence of Proposition \ref{prop:grading-change} is a bound on $q$ and the genus of $K$. Theorems \ref{thm:large-slopes} and \ref{thm:small-slopes} and Proposition \ref{prop:ns-ratio} rely on the fact the sums of relative gradings $\mrelsum(Y_+)$ and $\mrelsum(Y_-)$ should agree, and thus for every generator $x$ of $\HFhat_{red}(Y_+)$ with $\Delta_{rel}(x) = -n < 0$, there must be $n$ generators $y_1, \ldots, y_n$ with $\Delta_{rel}(y_i) = 1$. But the set of relative gradings is an invariant, not just its sum, so in fact $\mrel(Y_+) = \mrel(Y_-)$ as multisets. This lets us say more:
\begin{lemma}\label{lem:chain}
Suppose $Y_+ \cong Y_-$. If $\HFhat_{red}(Y_+)$ contains a generator $x$ with $\Delta_{rel}(x) = -n < 0$ and $\mrel(x) = m$, then it must contain generators $y_1, \ldots, y_n$ with $\Delta_{rel}(y_i) = 1$ and $\mrel(y_i ) = m-i$.
\end{lemma}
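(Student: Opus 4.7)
The strategy is to promote the scalar identity $\mrelsum(Y_+) = \mrelsum(Y_-)$ that drove Theorems~\ref{thm:large-slopes}--\ref{thm:small-slopes} to a Laurent polynomial identity that records the full multiset of relative gradings, and then to read off the desired chain $y_1, \ldots, y_n$ as a statement about non-negative coefficients.

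To set this up, let $P_\pm^{red}(t) = \sum_{y \in \HFhat_{red}(Y_\pm)} t^{\mrel(y)}$. An orientation-preserving diffeomorphism $Y_+ \cong Y_-$ permutes $\spinc$ structures by some $\sigma$, preserves absolute Maslov gradings, and matches $d$-invariants of corresponding $\spinc$ structures, so $\mrel(Y_+, i) = \mrel(Y_-, \sigma(i))$ as multisets for each $i$. Summing over $i$ (and noting that the $p$ distinguished generators $x_0^i$ all sit at $\mrel = 0$ on both sides and so cancel) yields $P_+^{red}(t) = P_-^{red}(t)$. On the other hand, the ungraded bijection $\phi$ from the start of this section lets us rewrite the $Y_-$ side in terms of generators of $\HFhat_{red}(Y_+)$:
\[
P_-^{red}(t) = \sum_{y} t^{\mrel(\phi(y))} = \sum_{y} t^{\mrel(y) + \Delta_{rel}(y)}.
\]

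The next step is to split each side according to whether $A(y) = 0$, using that $\Delta_{rel}(y) = 1$ exactly in that case and $\Delta_{rel}(y) \le -1$ otherwise. Writing $P_0(t) = \sum_{y : A(y) = 0} t^{\mrel(y)}$ and cancelling the terms with $A(y) \neq 0$ on one side against the terms with $A(y) = 0$ on the other, the identity $P_+^{red}(t) = P_-^{red}(t)$ rearranges to
\[
(t-1)\, P_0(t) = \sum_{y : A(y) \neq 0} t^{\mrel(y) + \Delta_{rel}(y)} \bigl( t^{|\Delta_{rel}(y)|} - 1 \bigr).
\]
The factor of $t-1$ then divides the right hand side via $t^k - 1 = (t-1)(1 + t + \cdots + t^{k-1})$, and since $t-1$ is a non-zerodivisor in the Laurent polynomial ring we obtain
\[
P_0(t) = \sum_{y : A(y) \neq 0}\ \sum_{j=1}^{|\Delta_{rel}(y)|} t^{\mrel(y) - j}.
\]

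To finish, I would compare coefficients. The contribution of the given generator $x$ to the right hand side is exactly $t^{m-1} + t^{m-2} + \cdots + t^{m-n}$. Every summand on the right is a non-negative combination of monomials, and $P_0(t)$ has non-negative integer coefficients, so for each $i = 1, \ldots, n$ the coefficient of $t^{m-i}$ in $P_0(t)$ is at least one. This furnishes distinct height-$0$ generators $y_i \in \HFhat_{red}(Y_+)$ with $\mrel(y_i) = m - i$ (and hence $\Delta_{rel}(y_i) = 1$), as required. The only step that looks potentially delicate is the first one---ensuring that the $\spinc$-permuting identification $\sigma$ coming from a diffeomorphism really does yield a single unordered multiset equality $\mrel(Y_+) = \mrel(Y_-)$ independent of $\sigma$---but this follows automatically from the fact that $\mrel$ is defined intrinsically from the absolute Maslov grading and the $d$-invariant, both of which a diffeomorphism preserves across matched $\spinc$ structures.
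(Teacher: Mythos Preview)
Your proof is correct, and the algebra checks out: splitting $P_+^{red}=P_-^{red}$ according to whether $A(y)=0$ and dividing through by $t-1$ really does give the identity
\[
P_0(t)=\sum_{y:A(y)\neq 0}\ \sum_{j=1}^{|\Delta_{rel}(y)|} t^{\mrel(y)-j},
\]
and since every term on the right is a nonnegative monomial, the summand from $x$ alone forces the coefficient of $t^{m-i}$ in $P_0$ to be positive for $i=1,\dots,n$.

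The paper argues differently. It composes the grading-preserving isomorphism $\psi$ induced by the diffeomorphism with the ungraded bijection $\phi$ to get a permutation $\psi^{-1}\circ\phi$ of the generators of $\HFhat(Y_+)$; under this permutation, $\mrel$ changes by $\Delta_{rel}$ at each step. Following the cycle through $x$, the grading drops from $m$ to $m-n$ and must eventually return to $m$, but can climb by at most $1$ at a time---so the cycle passes through each value $m-i$ via a generator with $\Delta_{rel}=1$. Your generating-function route reaches the same conclusion without singling out a cycle, and as a bonus the displayed formula for $P_0$ actually gives the exact count of height-zero generators at every grading (not just the lower bound the lemma asks for). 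The paper's argument, on the other hand, is a touch more elementary and makes the ``chain'' structure in the lemma's name visually transparent.
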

\begin{proof}
If $\psi:\HFhat(Y_+) \to \HFhat(Y_-)$ is a grading preserving isomorphism, then $\psi^{-1}\circ\phi$ determines a permutation on the multiset of gradings $\mrel(Y_+)$. This permutation takes an element $m$ of $\mrel(Y_+)$ (corresponding to the grading of the generator $x$) to $m-n$. The cycle containing this element must eventually return to $m$, but at each step the relative grading can increase by at most one by Proposition \ref{prop:grading-change}. Thus for each $m-i$ between $m-n$ and $m-1$, $m-i$ appears in the cycle followed by $m-i+1$; each such element $m-i$ corresponds to $\mrel$ for some generator with positive $\Delta_{rel}$, and we take this generator to be $y_i$.
\end{proof}
We now need to relate $\mrel$ of generators of $\HFhat(Y_+)$ with $\Delta_{rel} = 1$ to the knot Floer homology of $K$. Recall that the $\delta$-grading refers to the difference between the Alexander and Maslov gradings.

\begin{lemma}\label{lem:CFK-delta-gradings}
If $x$ is a generator of $\HFhat(Y_+)$ coming from a vertical segment of height 0 and $\mrel(x) = m$, then there is an intersection point in $\hatGamma \cap \mu$ (i.e. a generator of $\HFKhat(K)$) with $\delta$-grading at least $-m-1$, and there is one with $\delta$-grading at most $-m$.
\end{lemma}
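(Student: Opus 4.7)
The plan is to adapt the bigon computation from the proof of Proposition \ref{prop:grading-change} to the knot-Floer setting by replacing $\ell_{p,q}^i$ with $\mu$ (and $x_0^i$ with $y_0$), thereby translating grading information about $x$ as a surgery generator into grading information about certain intersections of $\hatGamma$ with $\mu$, which are precisely the generators of $\HFKhat(K)$.

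First I would set up a grading arrow $g$ of integer weight $m_0$ connecting $\gamma_0$ near $x_0^i$ to the bottom end of the vertical segment $s$ at height $0$ containing $x$, exactly as in the proof of Proposition \ref{prop:grading-change}. Since $A(x) = 0$ forces $k(x) = 0$, the bigon from $x_0^i$ to $x$ covers no marked points, and by Lemma \ref{lem:maslov-easy-bigons}(b),
\[
m = \mrel(x) = -1 + 2 m_0, \qquad \text{so} \qquad m_0 = \tfrac{m+1}{2}.
\]

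Next I would identify an intersection $y \in \hatGamma \cap \mu$ adjacent to $s$, arising from $\hatGamma$ wrapping around one of the two pegs at the endpoints of $s$. Depending on the configuration of the wrap, $y$ sits at integer height $0$ (if the curve exits the peg on the same side of $\mu$ as $s$) or at integer height $\pm 1$ (if the wrap crosses $\mu$). The analogous bigon from $y_0$ to $y$---now with left boundary on $\mu$ instead of $\ell_{p,q}^i$---covers either $0$ marked points or $1$ marked point (the peg at the relevant end of $s$) in these two cases. Using $M(y_0) = 0$ and Lemma \ref{lem:maslov-easy-bigons}(b), I would obtain $M(y) = m$ in the first case and $M(y) = m+2$ in the second, hence $\delta(y) = -m$ or $\delta(y) = -m-1$ respectively. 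Either value satisfies both bounds $\delta(y) \ge -m-1$ and $\delta(y) \le -m$ claimed in the lemma.

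The main obstacle will be to confirm that such an adjacent intersection $y$ always exists. This requires examining the possible wraps at the two pegs of $s$: by the pulled-tight form of $\hatGamma$ and property (III) of Section \ref{sec:properties} (net-zero winding around each puncture for non-essential components), at least one wrap must produce the needed intersection of $\hatGamma$ with $\mu$. A short case analysis on the possible wrap configurations, together with the $\pi$-rotational symmetry (property (IV)), will handle all situations and complete the argument.
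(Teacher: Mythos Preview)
Your setup with the grading arrow is fine (modulo a parity issue: your formula $m = -1 + 2m_0$ only covers the case where the bigon has no cusp, i.e.\ the vertical segment is oriented upward; the other orientation gives $m = 2m_0$, so both parities of $m$ must be treated). The real gap is in the second step.

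You assert that the intersection $y \in \hatGamma \cap \mu$ obtained by following the curve past one of the pegs at the ends of $s$ lies at height $0$ or $\pm 1$, and hence that $\delta(y) \in \{-m,-m-1\}$. This is not true in general. After wrapping around the peg at height $+\tfrac12$, the curve may continue as a vertical segment at height $1$ on the \emph{same} side of $\mu$, then another at height $2$, and so on, before finally crossing $\mu$ at some height $A \ge 0$ that can be arbitrarily large. Your appeal to property~(III) does not prevent this: that property constrains the \emph{total} winding of a closed component, not the local behavior at a single peg, and property~(IV) (rotational symmetry) relates the curve to its $\pi$-rotate, not the two ends of a given segment to each other. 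So there is no guarantee of a nearby $y$ with $\delta(y)$ in the two-element set you want.

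The paper's proof handles exactly this difficulty by tracking the height explicitly. It follows the curve both upward and downward from $V$ to the \emph{first} intersections $x'$ and $x''$ with $\mu$, notes only that $A(x') \ge 0$ and $A(x'') \le 0$, and computes (for $V$ to the right of $\mu$, say) $\delta(x') = -m - A(x')$ and $\delta(x'') = -m - 1 - A(x'')$. These give $\delta(x') \le -m$ and $\delta(x'') \ge -m-1$ as \emph{inequalities}, not equalities, and in general neither generator has $\delta$-grading exactly $-m$ or $-m-1$. In particular you genuinely need two generators, one for each bound; a single $y$ adjacent to $s$ will not do the job.
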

\begin{proof}
Let $V$ be the vertical segment in $\hatGamma$ containing $x$. We may assume there is a grading arrow in $\hatGamma$ moving upward from $\gamma_0$ to $V$, as shown in Figure \ref{fig:height0-gradings}(a) if $m$ is odd and \ref{fig:height0-gradings}(d) if $m$ is even. Consider first the case that $m$ is odd, with $m = 2k-1$; then the arrow has weight $k$ and approaches $V$ from the right. We will assume that $\hatGamma$ is in minimal position with $\mu$ and note that $V$ lies either to the right of $\mu$, as in Figure \ref{fig:height0-gradings}(b), or to the left of $\mu$, as in Figure \ref{fig:height0-gradings}(c). In either case, let $x'$ and $x''$ in $\hatGamma\cap\mu$ be the first intersections with $\mu$ when following $\hatGamma$ upward and downward, respectively, from $V$. Note that $A(x') \ge 0$ and $A(x'') \le 0$.

Let $x_0$ denote the intersection of $\gamma_0$ with $\mu$, which by definition has Maslov grading $M(x_0) = 0$. If $V$ is to the right of $\mu$, there is a bigon from $x_0$ to $x'$ whose boundary traverses the grading arrow and which passes $A(x')$ marked points along $\mu$; from this and \eqref{eq:maslov-shift2} in Section \ref{sec:bifiltered-complexes} we compute
$$M(x') = M(x_0) + 2k + 2A(x') - 1 = m + 2A(x'),$$
$$\delta(x') = A(x') - M(x') = -m - A(x') \le -m.$$
The bigon from $x''$ to $x'$, which passes $A(x')-A(x'')$ marked points along $\mu$, gives
$$M(x'') = M(x') + 1 -2(A(x')-A(x'')) = m + 2A(x'') + 1,$$
$$\delta(x'') = A(x'') - M(x'') = -m - 1 - A(x'') \ge -m - 1.$$
If instead $V$ lies to the left of of $\mu$ then a bigon from $x_0$ to $x''$ gives
$$M(x'') = 2k - 1 = m, \hspace{1cm} \delta(x'') = A(x'') - m \le -m,$$
and a bigon from $x'$ to $x''$ gives
$$M(x') = M(x'') + 1 = m + 1, \hspace{1cm} \delta(x') = A(x') - m - 1 \ge -m - 1.$$

The case that $m = 2k$ is similar. The grading arrow must approach $V$ from the left, as in Figure \ref{fig:height0-gradings}(d), which adds a cusp to the boundary the bigons considered above which run over the grading arrow. This increases $M(x')$ and $M(x'')$ by one (computing in terms of $k$), which decreases the $\delta$ gradings by one, but $m$ is also increased from $2k-1$ to $2k$, so the conclusion still holds.
\end{proof}
\begin{figure}
\labellist
  \pinlabel {$x$} at 50 100
  \pinlabel {$x$} at 395 107
  \pinlabel {$x'$} at 145 144
  \pinlabel {$x''$} at 145 10
  \pinlabel {$x'$} at 271 144
  \pinlabel {$x''$} at 272 10
  \pinlabel {$\gamma_0$} at -2 60
  \pinlabel {$\gamma_0$} at 110 60
  \pinlabel {$\gamma_0$} at 222 60
  \pinlabel {$\gamma_0$} at 334 60

  \pinlabel {{\color{gray}$k$}} at 73 70
  \pinlabel {{\color{gray}$k$}} at 186 70
  \pinlabel {{\color{gray}$k$}} at 257 70
  \pinlabel {{\color{gray}$k$}} at 383 70
  \pinlabel {{\color{gray}$\mu$}} at 145 105
  \pinlabel {{\color{gray}$\mu$}} at 269 105
  
  \pinlabel {$(a)$} at 40 -10
  \pinlabel {$(b)$} at 152 -10
  \pinlabel {$(c)$} at 264 -10
  \pinlabel {$(d)$} at 376 -10

         \endlabellist
\includegraphics[scale=1]{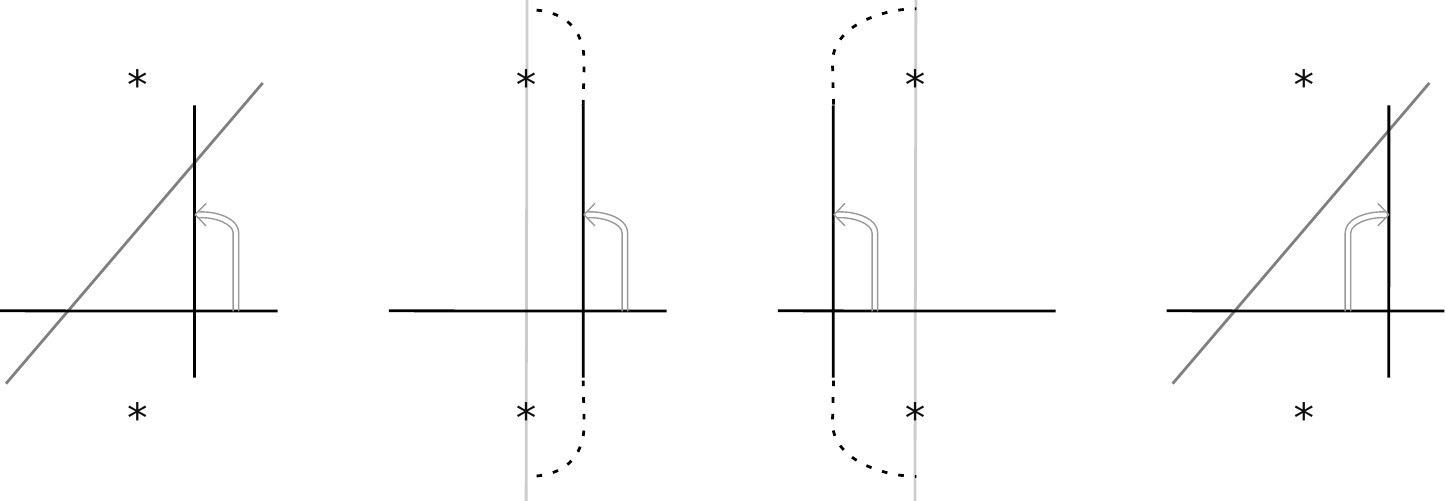}
\vspace{5 mm}

\caption{(a) A grading arrow such that $\mrel(x) = m = 2k-1$; (b,c) $x'$ and $x''$, the first intersections with $\mu$ above and below the vertical segment containing $x$; (d) a grading arrow such that $\mrel(x) = m = 2k$. }
\label{fig:height0-gradings}
\end{figure}

We are now ready to prove the promised bounds on $q$ and $g(K)$. Recall that $th(K)$ denotes the Heegaard Floer thickness of $K$; that is,
$$th(K) = \max\{ \delta(x) | x \in \HFKhat(K) \} - \min\{ \delta(x) | x \in \HFKhat(K) \}$$

\begin{theorem} \label{thm:thickness-bound}
Let $K$ be a nontrivial knot in $S^3$ of genus $g$. If $Y_+ = S^3_{1/q}(K)$ and $Y_- = S^3_{-1/q}(K)$ are diffeomorphic, then
$$ th(K) \ge 2qg(g-1) - 2g.$$
\end{theorem}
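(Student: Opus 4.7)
The plan is to apply the chain lemma (Lemma \ref{lem:chain}) to an explicit extremal generator $x$ of $\HFhat(Y_+)$ coming from the topmost relevant vertical segment of $\hatGamma(K)$, and then convert the resulting chain of height-$0$ generators into a lower bound on the $\delta$-thickness of $\HFKhat(K)$ via Lemma \ref{lem:CFK-delta-gradings}.

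First, the hypothesis $Y_+ \cong Y_-$ forces $\CFKinfty(K)$ to have an isolated generator by Theorem \ref{thm:NiWu-enhanced}, so $\gamma_0$ is horizontal. I take $x$ to be the intersection of $\ell_{1,q}^j$ with a vertical segment at height $s = g-1$ in the position $i = q-1$, which maximises $k(x)$ at that height. Combining the formula $k(x) = \tfrac{q(g-1)(g-2)}{2} + (q-1)(g-1)$ from the proof of Proposition \ref{prop:ns-ratio} with Proposition \ref{prop:grading-change} gives $\Delta_{rel}(x) = -(2qg(g-1) - 2g + 1)$. Set $n = 2qg(g-1) - 2g + 1$ and $m = \mrel(x)$. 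Then Lemma \ref{lem:chain} produces $n$ generators $y_1, \ldots, y_n$ of $\HFhat_{red}(Y_+)$ with $\mrel(y_i) = m - i$ and $\Delta_{rel}(y_i) = 1$; the equation $\Delta_{rel} = 1$ forces $|A(y_i)| = k(y_i) = 0$, so each $y_i$ lies on a vertical segment at height $0$. Applying Lemma \ref{lem:CFK-delta-gradings} to each $y_i$ produces intersection points $\xi_i, \eta_i \in \hatGamma \cap \mu$ (i.e., generators of $\HFKhat(K)$) with $\delta(\xi_i) \le i - m$ and $\delta(\eta_i) \ge i - m - 1$. Taking the extreme indices $i = 1$ and $i = n$ gives $\min \delta(\HFKhat(K)) \le 1 - m$ and $\max \delta(\HFKhat(K)) \ge n - m - 1$, hence the preliminary bound $th(K) \ge n - 2$.

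The main obstacle is upgrading this to the stated bound $th(K) \ge n - 1 = 2qg(g-1) - 2g$. From the proof of Lemma \ref{lem:CFK-delta-gradings} one has $\delta(\xi_i) = (i - m) - A(\xi_i)$ with $A(\xi_i) \ge 0$, and $\delta(\eta_i) = (i - m - 1) - A(\eta_i)$ with $A(\eta_i) \le 0$, so any nonzero value of $A(\xi_1)$ or $A(\eta_n)$ immediately widens the $\delta$-span by one. In the residual case $A(\xi_1) = A(\eta_n) = 0$ one uses the companion intersections $\eta_1$ and $\xi_n$, whose $\delta$-values realise $-m$ or $n - m$ whenever their own Alexander gradings vanish, so they supply the missing integer outside of a very specific configuration. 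Making this case analysis airtight when $\hatGamma$ contains non-simple curve components, where the ``first intersections above and below'' a height-$0$ vertical segment can have nontrivial Alexander gradings that a priori conspire against the immediate bound, is the technical heart of the argument; a clean way to resolve it is to exploit the rotational symmetry of $\hatGamma$ (property (IV) in Section \ref{sec:properties}) together with the parity structure of the relative Maslov grading to pair up potential $\delta$-values and rule out the worst-case cancellations, and to dispose separately of the degenerate small-$n$ cases (small $g$ or $q$) where the bound is trivial.
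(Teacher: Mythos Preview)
Your approach is essentially the same as the paper's up to the point where you obtain the preliminary bound $th(K) \ge n-2$, and you correctly identify that this is one short of what is needed. However, the ``upgrade'' paragraph is not a proof: you sketch a case analysis on the Alexander gradings $A(\xi_i)$, $A(\eta_i)$ and then gesture at rotational symmetry and parity considerations without carrying anything out. As written, this is a gap.

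The paper closes this gap with a much simpler device that you have overlooked. Since $g(K) = g$, the immersed multicurve $\hatGamma$ attains height $g$, and on either side of a local maximum there are at least \emph{two} vertical segments at height $g-1$. The topmost intersections of $\ell_{1,q}$ with these two adjacent segments give a pair of generators $x$ and $x'$ with $A(x) = A(x') = g-1$, the same value of $k$, and hence the same $\Delta_{rel} = -n$; moreover there is a small bigon between them covering exactly one puncture, so $\mrel(x') = \mrel(x) + 1$. Applying Lemma~\ref{lem:chain} to \emph{both} $x$ and $x'$ yields height-$0$ generators with $\mrel$ values covering the full range $m, m-1, \ldots, m-n$ (that is, $n+1$ values rather than $n$). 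Now Lemma~\ref{lem:CFK-delta-gradings} applied at the two ends gives $\delta \le -m$ and $\delta \ge -m + n - 1$, and hence $th(K) \ge n-1$ directly, with no residual case analysis needed.
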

\begin{proof}
The maximum height attained by $\hatGamma$ is $g$ so there are at least two vertical segments at height $g-1$, one on either side of a maximum of $\hatGamma$. Consider the highest intersection point of each of these two vertical segments with any $\ell_{1,q}^i$; call these $x$ and $x'$ as shown in Figure \ref{fig:thickness-triangle}. By construction, $A(x) = A(x') = g-1$. Counting the marked points in the closure of the triangle in Figure \ref{fig:thickness-triangle} and removing those on the boundary gives
$$k(x) = k(x') = \left[ q(g-1) + q(g-2) + \cdots + q(1) \right] - (g-1) = q\frac{g(g-1)}{2} - (g-1).$$
Let $d$ denote $-\Delta_{rel}(x) = -\Delta_{rel}(x')$. We have
$$d = 4k(x) + 2A(x) - 1 =   4\left( \frac{qg(g-1)}{2} - (g-1) \right) + 2(g-1) - 1 = 2qg(g-1) - 2g + 1.$$
Let $m$ be $\mrel(x)$; the small bigon from $x$ to $x'$ covering one puncture tells us that $\mrel(x') = \mrel(x) + 1 = m + 1$. By Lemma \ref{lem:chain}, there are generators $y_0, \ldots, y_d$ of $\HFhat_{red}(Y_+)$ with $\mrel(y_i) = m - i$ and $\Delta_{rel}(y_i) = +1$. Applying Lemma \ref{lem:CFK-delta-gradings} to $y_0$ and $y_d$, we find that $\HFKhat(K)$ contains generators $z_1$ and $z_2$ with $\delta(z_1) \le -m$ and $\delta(z_2) \ge -m - 1 + d$. Thus $th(K) \ge d-1$.
\end{proof}
\begin{figure}
\labellist
  \pinlabel {$x$} at 186 88
  \pinlabel {$x'$} at 166 92
  \pinlabel {$\gamma_0$} at 200 15

  \pinlabel {{\color{gray}$\ell_{1,q}^i$}} at 100 60

         \endlabellist
\includegraphics[scale=1]{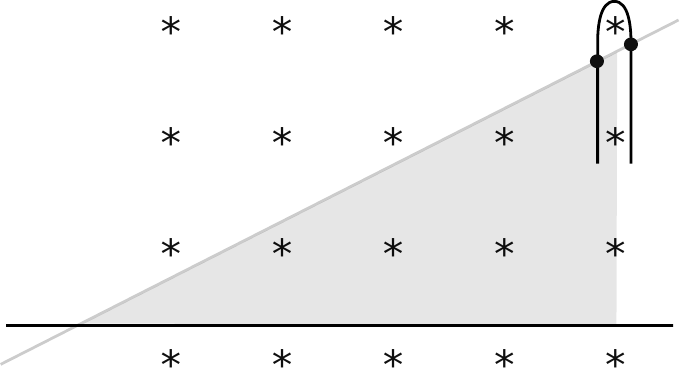}

\caption{Generators $x$ and $x'$ with minimal $\Delta_{rel}$ in  $\HFhat_{red}(Y_+)$ for surgery slope $1/q$. Here $q = 2$ and $g(K) = 3$.}
\label{fig:thickness-triangle}
\end{figure}

Theorems \ref{thm:large-slopes} and \ref{thm:small-slopes}, Proposition \ref{prop:ns-ratio}, and Theorem \ref{thm:thickness-bound} combine to give Theorem \ref{thm:main} in the introduction.

%%% Section: some explicit obstructions on knots in terms of CFK curves
\section{Explicit obstructions in terms of $\hatGamma$}\label{sec:explicit-obstructions}

\subsection{General constraints on $\hatGamma$}
In addition to Theorem \ref{thm:main}, it is helpful to have explicit conditions on a knot $K$, in terms of its knot Floer invariant, that ensure that $K$ admits no truly cosmetic surgeries at all. Several such conditions are already implicit in what has been discussed so far. One condition comes from Theorem \ref{thm:NiWu-enhanced}, namely that the curve $\gamma_0$ is horizontal. Another condition follows from the bounds in Theorem \ref{thm:main}: if $g = g(K) \neq 2$ and $th(K) + 2g < 2g(g-1)$, then $K$ admits no truly cosmetic surgeries. We have also seen that Proposition \ref{prop:grading-change} and the fact that $\sum_x \Delta_{rel}(x) = 0$ places constraints on the numbers $n_s$ of vertical segments in $\hatGamma(K)$ at height $s$, including the inequality \eqref{eq:n0-bound}. In particular, if fewer than half of all vertical segments occur at height 0, then $K$ cannot admit any truly cosmetic surgeries. For the slopes $\tfrac 1 q$ the inequality \eqref{eq:n0-bound} can be improved to an equation that can be solved for $q$, giving rise to Proposition \ref{prop:ns-ratio}. This places a further constraint on the existence of truly cosmetic surgeries which is implicit in Theorem \ref{thm:main}: if the quantity
$$\frac{n_0 + 2\sum_{s=1}^\infty n_s}{4 \sum_{s = 1}^\infty s^2 n_s}$$
is not a positive integer for a given knot $K$, then $K$ does not admit truly cosmetic surgeries.

To arrive at Proposition \ref{prop:ns-ratio} and the other constraints mentioned above, we only assumed that $S^3_r(K)$ and $S^3_{-r}(K)$ have the same $\emph{sum}$ of all relative gradings. By considering the \emph{set} of all relative gradings, we could impose further constraints on $K$. For example, in the case that $r=2$, it is not enough to have two vertical segments at height 0 for each vertical segment at height 1, we also require that vertical segments at height 0 give rise to generators in $S^3_r(K)$ that have grading one less than those coming from the vertical segment at height 1. Unfortunately it is difficult to state such conditions purely in terms of the knot Floer homology of $K$; this is partly because there is not a perfect correspondence between gradings of elements of the surgery coming from a vertical segment and gradings in knot Floer homology. 

We will see that this difficulty can be overcome in a particular special case. We remark that while it might be possible to state some additional (likely messy) conditions on $\hatGamma(K)$ in the general case, it is not worth doing so. In practice, to achieve this fine an obstruction on $K$ it is easiest to simply compute absolutely graded $\HFplus$ for each of the finitely many surgery pairs allowed by Theorem \ref{thm:main} and check if they agree for any pair. In this way, we can extract the maximum information from Heegaard Floer homology: for any knot we can always either rule out cosmetic surgeries or conclude that Heegaard Floer homology can not rule out some potential pair. This can be thought of as a condition on the knot Floer homology of $K$ (by the surgery formula, $\HFplus$ of the relevant surgeries is determined by knot Floer homology), though it is a condition that requires some computation to check.

\subsection{Further constraints for simple figure eight curves}  There is one situation where it is convenient to state additional constraints purely in terms of knot Floer homology, and that is when the underlying curve set for $\hatGamma$ consists only of $\gamma_0$ and simple figure eight curves. As mentioned in Remark \ref{rmk:only-figure-eights}, this is common in practice.

A simple figure eight component $\gamma_i$ of $\curveset$ intersects $\mu$ four times, corresponding to four generators of the knot Floer homology of $K$. These generators all have the same $\delta$ grading, so it makes sense to talk about the $\delta$ grading of the curve $\gamma_i$. The height of a simple figure eight component is the height at which it is centered, which is the Alexander grading of two of the four generators. Let $e_s^d$ denote the number of simple figure eight components in $\curveset$ at height $s$ with $\delta$-grading $d$, and let $e_s = \sum_{d\in\Z} e_s^d$ be the total number of simple figure eights at height $s$. Each simple figure eight curve contributes two vertical segments at height $s$, so if we assume $\gamma_0$ is horizontal and all other $\gamma_i$'s are simple figure eights then $n_s = 2e_s$. We will assume the self intersection in a simple figure eight curve $\gamma_i$ occurs below the vertical segments, as in Figure \ref{fig:figure-eights}, and with this understanding we will refer to the \emph{left} and \emph{right} vertical segments coming from $\gamma_i$. The relative grading of $\gamma_i$ is determined by its $\delta$-grading. In particular, if $\gamma_i$ is a simple figure eight at height $s\ge 0$ with $\delta$-grading $d$, we can add a consistent grading arrow from $\gamma_0$ to one of the vertical segments in $\gamma_i$ as shown in Figure \ref{fig:figure-eights}(a,b); the arrow passes to the right of any marked points up to height $s$ and ends on the right vertical segment and carries the weight $\tfrac{-d-s}{2}$ if $d+s$ is even, or it ends on the left vertical segment and carries the weight $\tfrac{1-d-s}{2}$ if $d+s$ is odd. The case of height $s < 0$ is similar, except that the arrow stays to the left of the marked points and $s$ is replaced with $|s|$ in the arrow weights.

\begin{remark}
This gives an alternative (and much simpler) way of encoding grading information in $\hatGamma(K)$ in the case that all curves other than $\gamma_0$ are simple figure eights: instead of decorating the set of curves with a collection of grading arrows, we can simply decorate each curve other than $\gamma_0$ with an integer, its $\delta$-grading.
\end{remark}

We now relate the $\delta$-grading $d$ of a simple figure eight component $\gamma_i$ at height $s$ to the relative grading of generators of $\HFhat(Y_+)$ coming from $\gamma_i$. Note that for each generator $x$ coming from the right vertical segment of $\gamma_i$, there is a corresponding generator $x'$ coming from the left vertical segment, as shown in Figure \ref{fig:figure-eights}(c). These intersection points are connected by a small bigon covering one marked point, so that $\mrel(x') = \mrel(x) + 1$. It is clear that $k(x) = k(x')$ and $A(x) = A(x') = s$, and thus that $\Delta_{rel}(x) = \Delta_{rel}(x') = 1 - 4k(x) -2s$. We calculate that $\mrel(x) = -1 + 2k(x) + s - d$, regardless of the parity of $s+d$. For any given slope $r$, it is possible to compute the $k(x)$ for all intersection points of lines of slope $r$ and state an obstruction to $S^3_r(K)$ and $S^3_{-r}(K)$ agreeing purely in terms of the quantities $e^d_s$ for $K$; we will only do this for the slope $r = 1$, where every intersection point $x$ coming from a simple figure eight at height $s$ has $k(x) = \tfrac{s(s-1)}{2}$.

\begin{figure}
\labellist
  \pinlabel {$x$} at 154 57
  \pinlabel {$x'$} at 135 62
  
  \pinlabel {$\gamma_0$} at 103 2
  \pinlabel {$\gamma_0$} at 37 2

  \pinlabel {{\color{gray}$\frac{1-d-s}{2}$}} at 106 30
  \pinlabel {{\color{gray}$\frac{-d-s}{2}$}} at 40 30

  \pinlabel {$(a)$} at 18 -10
  \pinlabel {$(b)$} at 80 -10
  \pinlabel {$(c)$} at 144 -10

         \endlabellist
\includegraphics[scale=1]{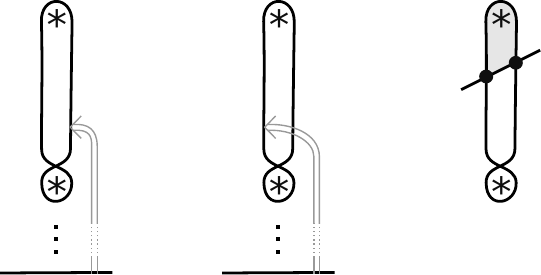}
\vspace{5 mm}
\caption{(a) A grading arrow for a simple figure eight component at height $s$ with $\delta$-grading $d$ if $d+s$ is even; (b) a grading arrow if $d+s$ is odd; (c) a pair $x$ and $x'$ of intersection points representing generators of $\HFhat(Y_+)$, with $\mrel(x') = \mrel(x) + 1$.}
\label{fig:figure-eights}
\end{figure}

\begin{proposition}\label{prop:figure-eight-contraints-slope1}
Suppose that $\hatGamma(K)$ consists only of $\gamma_0$ and simple figure eight curves, and let $e^d_s$ be the number of simple figure eight components with height $s$ and $\delta$-grading $d$. If $S^3_{+1}(K) \cong S^3_{-1}(K)$, then $\gamma_0$ is horizontal and for every $D \in \Z$, we must have
$$ e_0^D = \sum_{s\neq 0} \sum_{d = -D-s^2+1}^{-D+s^2-1} e^d_s.$$
\end{proposition}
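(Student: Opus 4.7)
The first statement is immediate from Theorem \ref{thm:NiWu-enhanced}(i): under the hypothesis $S^3_{+1}(K)\cong S^3_{-1}(K)$, the complex $\CFKinfty(K)$ has an isolated generator, so in particular the distinguished component $\gamma_0$ of $\hatGamma(K)$ is horizontal. Set $Y_\pm = S^3_{\pm 1}(K)$. Since $p=1$ there is only one spin$^c$ structure, and by Proposition \ref{prop:d-invariant} together with the horizontal form of $\gamma_0$ we get $d(Y_+,0)=d(Y_-,0)=0$. Consequently, a diffeomorphism $Y_+\cong Y_-$ forces the multisets of relative Maslov gradings $\mrel(Y_+,0)$ and $\mrel(Y_-,0)$ (relative to the distinguished generator in each case) to coincide.

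Next I would compute the contribution of each simple figure eight component. For a figure eight $\gamma_i$ at height $s$ and $\delta$-grading $d$, each of its two vertical segments meets $\ell_{1,\pm 1}^0$ in a single point, so $\gamma_i$ contributes two generators to each of $\HFhat(Y_\pm,0)$. Using the grading arrow shown in Figure \ref{fig:figure-eights} (weight $-\tfrac{d+s}{2}$ or $\tfrac{1-d-s}{2}$ depending on the parity of $s+d$), the bigon formulas from Lemma \ref{lem:maslov-easy-bigons}, and the computation $\mrel(x)=-1+2k(x)+s-d$ carried out immediately before the statement, one finds the pair
$$\{s^2-1-d,\;s^2-d\}\subset\mrel(Y_+,0).$$
Applying Proposition \ref{prop:grading-change} with the shift $\Delta_{rel}=1-2s^2$ then yields the corresponding pair in $\mrel(Y_-,0)$:
$$\{-s^2-d,\;-s^2+1-d\}.$$

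The final step is to translate the multiset equality $\mrel(Y_+,0)=\mrel(Y_-,0)$ into the stated identity. I would count the number of generators at each value $D\in\Z$ on both sides. The distinguished generator from $\gamma_0$ contributes $\delta_{D,0}$ identically to both, so after cancellation the equation becomes
$$\sum_{(s,d)} e_s^d\bigl(\delta_{d,s^2-1-D}+\delta_{d,s^2-D}\bigr)=\sum_{(s,d)} e_s^d\bigl(\delta_{d,-s^2-D}+\delta_{d,-s^2+1-D}\bigr).$$
Separating the $s=0$ summands from $s\neq 0$ and invoking the $\pi$-rotation symmetry $e_s^d=e_{-s}^d$ (property (IV) of $\hatGamma$) symmetrizes the $s\neq 0$ contributions. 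I would then organize the remaining constraint by pairing the relation at $D$ with the corresponding one at a suitable shift, and use compact support of the $e_s^d$ (the curves form a finite set) to telescope. What should fall out is exactly that $e_0^D$ equals the sum over $s\neq 0$ of $e_s^d$ for $d$ in a window of width $2s^2-1$ centered at $-D$, i.e.\ the stated identity.

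The main obstacle is this last bookkeeping step: the shifts $1-2s^2$ depend on $s$, so the constraint at a single value of $D$ mixes contributions from figure eights at many different heights. The window $[-D-s^2+1,\,-D+s^2-1]$ arises because a figure eight at $(s,d)$ with $s\neq 0$ is shifted by $1-2s^2$ between $Y_+$ and $Y_-$, so its pair straddles $D$ in both $Y_+$ and $Y_-$ precisely when $d+D$ lies in $[-s^2+1,\,s^2-1]$. Making this matching rigorous — and verifying that no other obstruction (beyond the multiset equality already derived) is needed — is the technical heart of the proof.
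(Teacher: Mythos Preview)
Your setup is correct: the horizontality of $\gamma_0$, the grading contributions $\{s^2-1-d,\,s^2-d\}$ in $Y_+$ and $\{-s^2-d,\,-s^2+1-d\}$ in $Y_-$ from a figure eight at $(s,d)$, and the reduction to a single spin$^c$ structure are all exactly as in the paper. The gap is your final step, which you yourself flag as the main obstacle: you have not actually carried out the bookkeeping that turns the multiset equality into the stated formula, and the sketch you give (pairing the relation at $D$ with a shifted one, invoking $e_s^d=e_{-s}^d$, and telescoping) is not a proof.

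The paper's argument bypasses this difficulty entirely by using Lemma~\ref{lem:chain} rather than raw multiset equality. That lemma exploits the specific structure of the bijection $\phi$: since $\Delta_{rel}(x)\le 1$ for every generator, the permutation $\psi^{-1}\circ\phi$ on the multiset of gradings decomposes into cycles in which every drop of size $n$ must be recovered by exactly $n$ consecutive $+1$ steps. Applied here (working with only the right vertical segment of each figure eight, since the left segment behaves identically with gradings shifted by one), a figure eight at height $s\neq 0$ and $\delta$-grading $d$ gives a generator with $\mrel=s^2-1-d$ and $\Delta_{rel}=1-2s^2$; Lemma~\ref{lem:chain} then produces a chain of $2s^2-1$ generators with $\Delta_{rel}=+1$, hence from height-$0$ figure eights, at $\delta$-gradings running through an interval of length $2s^2-1$. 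Since the total grading shift is zero, every height-$0$ figure eight is accounted for by exactly one such chain, and counting contributions to $e_0^D$ gives the formula immediately. This chain argument is the missing key ingredient; it is strictly more structured than the multiset equality you invoke and is what makes the combinatorics collapse.
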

\begin{proof}
We already know that $\gamma_0$ is horizontal by Theorem \ref{thm:NiWu-enhanced}. We now consider the graded contribution of the simple figure eight components to each surgery. It is enough to consider only one generator of $\HFhat_{red}(S^3_{+1}(K))$ coming from each simple figure eight component, the one coming from the right vertical segment, since the generators coming from the left vertical segments behave exactly the same with the grading shifted up by one. With this in mind, a simple figure eight component with height $s$ and $\delta$-grading $d$ contributes a generator $x$ with $\mrel(x) = s^2 - 1 -d$ and $\Delta_{rel}(x) = 1 - 2s^2$. By Lemma \ref{lem:chain}, this must be counteracted by a chain of generators coming from height zero vertical segments with gradings ranging from $-d -s^2 $ up to $-d + s^2-2$. These must come from a chain of height 0 simple figure eight components with $\delta$-gradings $-d -s^2 +1 , -d -s^2 +2, \ldots, -d + s^2 - 1$ (we are still considering only the right vertical segment of each figure eight). All height zero figure eight components must be accounted for in one of these chains, and we see that there is a contribution to $e^D_0$ for each figure eight component with height $s \neq 0$ and grading $d$ with $d \ge -D-s^2+1$ and $d \le -D+s^2-1$.
\end{proof}
Restricting to knots with small thickness gives the following condition:
\begin{proposition}\label{prop:figure-eight-constraints-thin}
Suppose, as in the previous Proposition, that $\hatGamma(K)$ consists only of $\gamma_0$ and simple figure eight components, and suppose $th(K) <  4$. If $K$ admits any truly cosmetic surgery, then $e_s^d = 0$ for all $|s| > 1$, and $e_0^d = e_1^d + e_{-1}^d = 2e_1^d$ for every $\delta$-grading $d$.
\end{proposition}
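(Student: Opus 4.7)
My plan is to combine Theorem \ref{thm:main} with Proposition \ref{prop:figure-eight-contraints-slope1} (plus a parallel analysis for the slope $\pm 2$ case). First I would use the thickness hypothesis to reduce to $g(K) = 2$: by Theorem \ref{thm:main} the cosmetic slope pair is $\{\pm 2\}$ (forcing $g(K)=2$ by part (ii)) or $\{\pm 1/q\}$. In the latter case, part (iii) gives $q \le (th(K)+2g)/(2g(g-1))$; with $th(K)<4$ an integer, so $th(K)\le 3$, this bound is $<1$ whenever $g\ge 3$, which is impossible. Since $g=1$ is excluded by Corollary \ref{cor:genus1}, we conclude $g(K)=2$, and therefore $e_s^d=0$ for $|s|>1$ because vertical segments occur only at heights in $\{-(g-1),\dots,g-1\}$. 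Moreover the bound becomes $q\le 7/4$, so $q=1$; hence the only candidate slope pairs are $\{\pm 1\}$ and $\{\pm 2\}$.

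For slopes $\pm 1$ I would apply Proposition \ref{prop:figure-eight-contraints-slope1}. With $g=2$ only $s=\pm 1$ contribute to the outer sum, and because $s^2=1$ the inner sum over $d$ collapses to a single term, so each height-$\pm 1$ figure eight of $\delta$-grading $d$ contributes exactly once to the corresponding $e_0^D$. The $\pi$-rotation symmetry of $\hatGamma$ (Property (IV)) pairs each height $s$ figure eight with one at height $-s$ and preserves the $\delta$-grading, since the conjugation symmetry $\HFKhat_m(K,s)\cong \HFKhat_{m-2s}(K,-s)$ preserves $\delta=m-s$. Thus $e_{-1}^d=e_1^d$, and combining this with the relation from Proposition \ref{prop:figure-eight-contraints-slope1} yields $e_0^d=2\,e_1^d$ for each $d$.

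For slopes $\pm 2$ I would carry out a parallel analysis. Since $p/q=2>1$, each line $\ell_{2,1}^i$ meets any vertical segment at most once, and direct inspection of the two spin$^c$ structures shows $\ell_{2,1}^0$ hits only height $0$ segments while $\ell_{2,1}^1$ hits only height $\pm 1$ segments. The corresponding triangles have no interior marked points, so Proposition \ref{prop:grading-change} gives $\Delta_{rel}=+1$ for height $0$ generators and $\Delta_{rel}=-1$ for height $\pm 1$ generators. Lemma \ref{lem:chain} then forces, for each of the latter, a unique height $0$ generator with $\mrel$ exactly one lower. Computing $\mrel$ in terms of the $\delta$-grading via consistent grading arrows and bigons (as in the proof of Proposition \ref{prop:figure-eight-contraints-slope1}) and then bookkeeping the forced pairings yields $e_0^d=e_1^d+e_{-1}^d=2\,e_1^d$ for every $d$. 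I expect the main technical step to be this bookkeeping in the slope 2 case: one must select consistent lifts of $\ell_{2,1}^i$ and the grading arrows so that the induced correspondence between the $\delta$-grading of a height $\pm 1$ figure eight and that of the matching height $0$ figure eight comes out as $d\leftrightarrow d$ and not, say, $d\leftrightarrow -d$.
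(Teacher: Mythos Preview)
Your proposal is correct and follows essentially the same route as the paper: reduce to $g(K)=2$ and slopes $\pm 1$ or $\pm 2$ via Theorem~\ref{thm:main}, apply Proposition~\ref{prop:figure-eight-contraints-slope1} for slope $\pm 1$, and handle slope $\pm 2$ by a parallel grading analysis.

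The one place where the paper is noticeably more efficient is your step for slope $\pm 2$. You set up a separate computation with the two spin$^c$ structures and flag the $\delta$-grading bookkeeping as the ``main technical step.'' The paper bypasses this entirely with a single observation: since $p/q = 2 > 1$ and all segments are at height $|s| \le 1$, every intersection point $x$ on a height $\pm 1$ segment still has $k(x) = 0$, exactly as in the slope $1$ case. Because $\mrel(x) = -1 + 2k(x) + s - d$ and $\Delta_{rel}(x) = 1 - 2|s| - 4k(x)$ depend on the slope only through $k(x)$, the values of $\mrel$ and $\Delta_{rel}$ for every generator are \emph{identical} to those in the slope $1$ computation. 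Hence the chain argument from the proof of Proposition~\ref{prop:figure-eight-contraints-slope1} applies verbatim, and your worry about whether the correspondence comes out as $d \leftrightarrow d$ or $d \leftrightarrow -d$ evaporates: it is literally the same matching as for slope $1$. No separate choice of lifts or grading arrows is needed.
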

\begin{proof}
By Theorem \ref{thm:main}, we know that $g(K) = 2$, which implies $e_s^d = 0 $ if $|s| \ge 2$. We also know that the only possible truly cosmetic surgeries have slopes $\pm 1$ or $\pm 2$. In the first case, we apply Proposition \ref{prop:figure-eight-contraints-slope1} to get the desired result. For the latter case, note that intersections $x$ between lines of slope $2$ and vertical segments at height $1$ or $-1$ still have $k(x) = 0$, and so $\mrel(x)$ and $\Delta_{rel}(x)$ are the same as in the slope 1 case, and the reasoning in the proof of Proposition \ref{prop:figure-eight-contraints-slope1} applies.
\end{proof}
There is a sort of Heegaard Floer converse to this statement: if $g(K) = 2$, and $e_0^d = 2e_1^d$ for all gradings $d$, then Heegaard Floer homology can not distinguish either pair---that is, $\HFhat(S^3_r(K))$ and $\HFhat(S^3_{-r}(K))$ are isomorphic as absolutely graded vector spaces for $r \in \{1,2\}$. We compute this explicitly for the example of $9_{44}$ below, which serves as a model computation for the general case. One might hope that upgrading to $\HFplus$ would help in this situation, but it does not. In fact, when $\gamma_0$ is horizontal and all other curves in $\hatGamma(K)$ are simple figure eight components, $\HFplus$ is determined by $\HFhat$ for any surgery on $K$.

In the case of thin knots this condition can be given purely in terms of the Alexander polynomial, as stated in the introduction.
\begin{proof}[Proof of Theorem \ref{thm:alternating}]
We use Proposition \ref{prop:figure-eight-constraints-thin}, though since $K$ is thin there is only one occupied $\delta$-grading, which must be 0 if $\gamma_0$ is horizontal. For $K$ to admit truly cosmetic surgeries, we must have some number $n$ of simple figure eights at height 1 (and at height $-1$, by symmetry), and $2n$ simple figure eight components at height 0. It easy to compute $\Delta_K(t)$ from this information and see that it has the desired form. (Conversely, for thin knots $\hatGamma(K)$ is determined by $\Delta_K(t)$ and $\sigma(K)$; for  $\sigma(K) = 0$ and $\Delta_K(t)$ as in the conclusion of the Theorem, it is easy to check the $\gamma_0$ is horizontal, $e_1^0 = e_{-1}^0 = n$, and $e_0^0 = 2n$, and thus Heegaard Floer homology does not distinguish $\pm 1$ surgeries or $\pm 2$ surgeries.)
\end{proof}

\subsection{Unobstructed knots}
We conclude this section by demonstrating that Theorem \ref{thm:main} cannot be substantially improved using Heegaard Floer homology alone. We first note that there exist knots for which Heegaard Floer homology does not distinguish $\pm 1$ surgeries or $\pm 2$ surgeries. For example, consider the knot $9_{44}$ shown in Figure \ref{fig:9_44}(a). This example appeared in \cite{OzSz:rational-surgeries}, where it was first observed that $\HFplus(S^3_1(9_{44})) \cong \HFplus(S^3_{-1}(9_{44}))$. It turns out that this example is representative of all currently known examples for which Heegaard Floer homology does not obstruct truly cosmetic surgeries, so we now examine this example in detail. The knot Floer invariant $\hatGamma(9_{44})$ is shown in Figure \ref{fig:9_44}(b) (see also Figure \ref{fig:curve-examples}); $\gamma_0$ is horizontal and there are four simple figure eight components, with $e_1^0 = e_{-1}^0 = 1$ and $e_0^0 = 2$. 

%Since this knot is thin Proposition \ref{prop:figure-eight-constraints-thin} applies, and we see that $\hatGamma(9_{44})$ satisfies the necessary condition given by that proposition for $9_{44}$ to admit truly cosmetic surgeries. We claim that whenever $\hatGamma(K)$ has the form assumed in Proposition \ref{prop:figure-eight-constraints-thin} and this necessary condition holds, Heegaard Floer homology 

%By the comment after Proposition \ref{prop:figure-eight-constraints-thin} Heegaard Floer homology does not distinguish $\pm 1$ surgeries or $\pm 2$ surgeries, but it is instructive to work through this computation directly. 
\begin{figure}
\labellist
  \pinlabel {$(a)$} at 30 -10
  \pinlabel {$(b)$} at 110 -10
  \pinlabel {$(c)$} at 185 -10
  \pinlabel {$(d)$} at 260 -10
  
  \pinlabel {$\gamma_0$} at 92 73
  \pinlabel {$\gamma_1$} at 102 140
  \pinlabel {$\gamma_2$} at 93 95
  \pinlabel {$\gamma_3$} at 109 95
  \pinlabel {$\gamma_4$} at 102 35

\tiny
  \pinlabel {$x_0$} at 169 74

  \pinlabel {$b_1$} at 178 142
  \pinlabel {$a_1$} at 191 144
  \pinlabel {$b_2$} at 169 86
  \pinlabel {$a_2$} at 200 105
  \pinlabel {$b_3$} at 177 93
  \pinlabel {$a_3$} at 192 96
  \pinlabel {$b_4$} at 178 47
  \pinlabel {$a_4$} at 191 49
  
  \pinlabel {$x_0$} at 244 74

  \pinlabel {$b_1$} at 253 144
  \pinlabel {$a_1$} at 266 146
  \pinlabel {$b_2$} at 244 88
  \pinlabel {$a_2$} at 275 108
  \pinlabel {$b_3$} at 252 96
  \pinlabel {$a_3$} at 267 100
  \pinlabel {$b_4$} at 253 48
  \pinlabel {$a_4$} at 266 50

         \endlabellist
\includegraphics[scale=1.5]{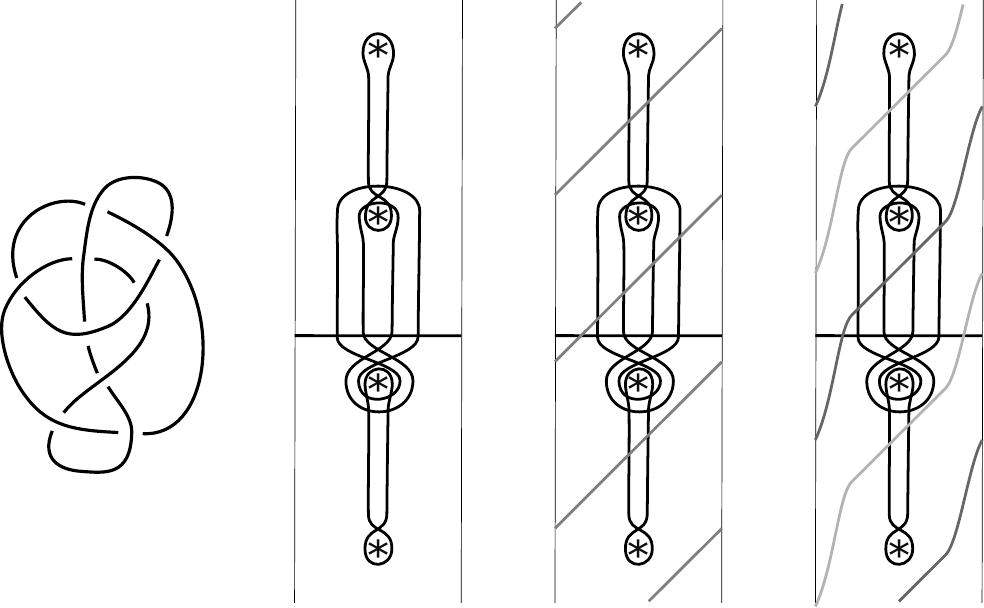}
\vspace{5 mm}
\caption{(a) The knot $9_{44}$; (b) the invariant $\hatGamma(9_{44})$ (all simple figure eight components have $\delta$-grading 0); (c) the computation of $\HFhat(S^3_1(9_{44}))$; (d) the computation of $\HFhat(S^3_2(9_{44}))$.}
\label{fig:9_44}
\end{figure}

Let $Y_\pm$ denote $S^3_{\pm 1}(9_{44})$; we compute $\HFhat(Y_+)$ using part $(c)$ of Figure \ref{fig:9_44}, which shows the intersection of $\hatGamma(9_{44})$ with a line of slope $+1$. It is easy to see that, in addition to the generator $x_0$ coming from $\gamma_0$, $\HFhat(Y_+)$ has a pair of generators $(a_i, b_i)$ coming from each of the simple figure eight curves, with $\mrel(b_i) = \mrel(a_i) + 1$. For the two extremal figure eights (that is, for $i \in \{1, 4\}$), we have $\mrel(a_i) = 0$, $\mrel(b_i) = 1$, and $\Delta_{rel}(a_i) = \Delta_{rel}(b_i) = -1$. For the two figure eights at height zero (that is, for $i \in \{2,3\}$), we have $\mrel(a_i) = -1$, $\mrel(b_i) = 0$, and $\Delta_{rel}(a_i) = \Delta_{rel}(b_i) = 1$. Applying $\phi$ with the given grading changes, we see that $\HFhat(Y_-)$ has $\mrel(\phi(x_0)) = 0$, $\mrel(\phi(a_i)) = -1$ for $i \in \{1,4\}$, $\mrel(\phi(b_i)) = 0$ for $i \in \{1,4\}$, $\mrel(\phi(a_i)) = 0$ for $i \in \{2,3\}$, and $\mrel(\phi(b_i)) = 1$ for $i \in \{2,3\}$. Thus $\HFhat(Y_+)$ and $\HFhat(Y_-)$ agree as relatively graded vector spaces. The absolute grading agrees as well since $d(Y_+) = 0 = d(Y_-)$. Note that the computation is similar for any genus two knot for which $\hatGamma$ contains only a horizontal $\gamma_0$ and simple figure eight curves and for which $e_0^d = 2e_1^d = 2e_{-1}^d$ for each $\delta$-grading $d$: each height 1 or height $-1$ figure eight with grading $d$ contributes a pair $(a_i, b_i)$ with $\mrel(a_i) = -d$, $\mrel(b_i) = 1-d$, and $\Delta_{rel}(a_i) = \Delta_{rel}(b_i) = -1$, and each height 0 figure eight with grading $d$ contributes a pair $(a_i, b_i)$ with $\mrel(a_i) = -1-d$, $\mrel(b_i) = -d$, and $\Delta_{rel}(a_i) = \Delta_{rel}(b_i) = +1$.

If instead we consider $Y_\pm = S^3_{\pm 2}(9_{44})$, the computation is almost identical except that the generators now split into two spin$^c$ structures. We compute $\HFhat(Y_+)$ using part $(d)$ of Figure \ref{fig:9_44}, which shows the intersection of $\hatGamma(9_{44})$ with two lines of slope $+2$ (one for each spin$^c$ structure). Ignoring the generators coming form $\gamma_0$, we have $\HFhat_{red}(Y_+,0)$ generated by the four intersections with height zero figure eights (labeled $b_2, b_3, a_3$, and $a_2$ in Figure \ref{fig:9_44}),  which have relative gradings $\{0,0,-1,-1\}$ and $\Delta_{rel}$ of $1$, while $\HFhat_{red}(Y_+,1)$ is generated by the four intersections with the extremal figure eights (labeled $b_1$, $b_4$, $a_1$, and $a_4$ in Figure \ref{fig:9_44}), which have relative gradings $\{1,1,0,0\}$ and $\Delta_{rel}$ of $-1$. Thus $\HFhat(Y_+, 0)$ and $\HFhat(Y_-, 1)$ agree as relatively graded vector spaces, as do $\HFhat(Y_+, 1)$ and $\HFhat(Y_-, 0)$; in other words, any graded isomorphism from $\HFhat(Y_+)$ to $\HFhat(Y_+)$ must permute the two spin$^c$ structures. This could potentially conflict with the absolute grading, but it does not, since 
$$d(Y_+, 0) = d(L(2,1),0) = \tfrac 1 4 = d(L(2,-1),1) = d(Y_-,1), $$
$$d(Y_+, 1) = d(L(2,1),1) = -\tfrac 1 4 = d(L(2,-1),0) = d(Y_-,0).$$

Once again, the computation is essentially the same for any genus two knot for which $\hatGamma$ contains only a horizontal $\gamma_0$ and simple figure eight curves and for which $e_0^d = 2e_1^d = 2e_{-1}^d$ for each $\delta$-grading $d$.
There are other knots which satisfy this property; Table \ref{table:remaining-knots} gives 337 such knots. Surprisingly, these are the only knots the author is currently aware of for which Heegaard Floer homology does not obstruct all truly cosmetic surgeries. In particular, an example for which $\HFhat(S^3_{1/q}(K))$ agrees with $\HFhat(S^3_{-1/q}(K))$ for $q > 1$ or for a knot with $g(K) > 2$ has not yet been found. We note that is possible to construct a decorated immersed curve $\hatGamma$ which would allow for truly cosmetic surgeries with $g > 2$ or $q >1$, but is not known whether such curves occur as the invariant for a knot in $S^3$. To construct such a curve for some $g \ge 2$ and $q \ge 1$, we can place $q$ simple figure eights at height $g-1$ and, for symmetry, another $q$ simple eights at height $(1-g)$. Each of these figure eights produces $q$ pairs of points in the intersection with $\ell_{1,q}$. Indexing these pairs of points by $1 \le i \le q$, each pair has some relative gradings $m_i$ and $m_i+1$ and some grading shift $\Delta_{rel} = -\Delta_i < 0$. For each $i$, we then add $2\Delta_i$ simple figure eights at height 0, with two in each $\delta$-grading from $-m_i$ to $-m_i+\Delta_i-1$. Each of these figure eights at a $\delta$-grading $d$ produces a pair of intersection points with relative gradings $-d - 1$ and $-d$ and with $\Delta_{rel} = +1$; it is straightforward to check that these grading increases counteract the grading decreases from the extremal figure eights when $\phi$ is applied. Thus for the resulting graded multicurve $\hatGamma$, the Floer homologies $HF(\hatGamma, \ell_{1,q})$ and $HF(\hatGamma, \ell_{1,-q})$ agree as graded vector spaces.

%%% Section: computational results
\section{Computational results}\label{sec:computation}
One important consequence of Theorem \ref{thm:main} is that for any given knot $K$, cosmetic surgeries on $K$ are ruled out for all but possibly a finite number of pairs of slopes. In practice, cosmetic surgeries are obstructed outright for the vast majority of knots, and for the remaining knots the ``finite number" of possible pairs that need to be checked is quite small, often just two. Thus checking the cosmetic surgery conjecture on any finite set of knots reduces to distingushing a small number of pairs of manifolds. Computing, say, hyperbolic invariants for these remaining pairs is very tractable, and often this is sufficient to rule out the remaining surgeries. To demonstrate this, we check the following:

\begin{theorem}\label{thm:16-crossings}
The cosmetic surgery conjecture holds for all prime knots with at most 16 crossings.
\end{theorem}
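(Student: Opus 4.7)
The plan is to run a finite computation guided by Theorem~\ref{thm:main} and its refinements, then separate whatever surgery pairs survive by hyperbolic invariants. Throughout, $K$ denotes a prime knot with at most $16$ crossings; the first input is the knot Floer invariant $\hatGamma(K)$ for every such $K$, from which one reads off $g(K)$ and $th(K)$, and in particular verifies the claim (stated in the introduction) that $th(K)\le 2$ for all primes on the list.

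First I would filter by genus. If $g(K)=1$ then Corollary~\ref{cor:genus1} rules out truly cosmetic surgeries. If $g(K)>2$ then since $th(K)\le 2<6$ Corollary~\ref{cor:small-thickness} applies. So only the $g(K)=2$ subfamily remains, and for these Theorem~\ref{thm:main} cuts the candidate slope pairs down to $\{\pm 2\}$ and $\{\pm 1/q\}$ with
\[
q \;\le\; \frac{th(K)+2g(K)}{2g(K)\bigl(g(K)-1\bigr)} \;\le\; \frac{2+4}{4} \;<\; 2,
\]
forcing $q=1$. Thus only the pairs $\{\pm 1\}$ and $\{\pm 2\}$ are ever candidates, and further only knots for which $\CFKinfty(K)$ has an isolated generator (Theorem~\ref{thm:NiWu-enhanced}) and $n_0=2n_1$ (Theorem~\ref{thm:main}(ii)) can produce the pair $\{\pm 2\}$.

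Next I would apply the finer constraints of Section~\ref{sec:explicit-obstructions}. Since $g(K)=2$ and $th(K)\le 2<4$, and every prime knot up to $15$ crossings other than $15n166130$ has all non-$\gamma_0$ curves being simple figure eights (Remark~\ref{rmk:only-figure-eights}), Proposition~\ref{prop:figure-eight-constraints-thin} applies to almost every remaining $K$ and demands $e_0^d=2e_1^d$ for every $\delta$-grading $d$; any knot violating this is eliminated, and the lone exceptional curve shape at $15$ crossings plus the $16$-crossing prime knots whose $\hatGamma$ is not of that form are handled by direct inspection via Proposition~\ref{prop:figure-eight-contraints-slope1} or by computing $\HFhat$ of the four relevant surgeries outright using Theorem~\ref{thm:surgery-formula}. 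The expected output of this sweep, based on direct enumeration, is the list of $337$ knots recorded in Table~\ref{table:remaining-knots}: precisely those primes for which neither $\{S^3_{\pm 1}(K)\}$ nor $\{S^3_{\pm 2}(K)\}$ is distinguished by absolutely graded $\HFhat$ (with $d$-invariants pinned down via Proposition~\ref{prop:d-invariant}).

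Finally, for each of the $337$ surviving knots I would compute the hyperbolic volume and Chern--Simons invariant of each of the (at most four) candidate Dehn fillings, first certifying that those fillings are hyperbolic. Comparing $(\mathrm{vol},\mathrm{CS})$ to within provable precision then separates the two surgeries in each pair. The main obstacle is not algebraic — the filtering above is mechanical once the knot Floer homology of every prime knot up to $16$ crossings is tabulated — but the rigour of this hyperbolic step: one must certify hyperbolicity of each filling and produce provable error bounds on $(\mathrm{vol},\mathrm{CS})$ that strictly exceed the observed differences between the two surgeries. This can be accomplished with verified interval arithmetic inside SnapPy (for example HIKMOT-style certified tetrahedral shapes), after which Theorem~\ref{thm:16-crossings} follows.
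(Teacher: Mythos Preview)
Your approach is essentially the same as the paper's: compute $\hatGamma(K)$ for all primes up to $16$ crossings, use thickness $\le 2$ to reduce to genus two and slopes $\pm 1,\pm 2$, apply Proposition~\ref{prop:figure-eight-constraints-thin} to get down to the $337$ knots in Table~\ref{table:remaining-knots}, and finish with hyperbolic invariants. Two small points of divergence are worth noting.

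First, you worry about non--figure-eight components (citing $15n166130$ and possible $16$-crossing exceptions) and propose ad hoc workarounds. The paper instead records as a direct computational observation that \emph{every genus two prime knot up to $16$ crossings} has $\hatGamma$ consisting only of $\gamma_0$ and simple figure eights, so Proposition~\ref{prop:figure-eight-constraints-thin} applies uniformly with no exceptions to handle. (The knot $15n166130$ is not genus two.)

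Second, and more substantively, you assert that the hyperbolic step can be made fully rigorous via certified interval arithmetic for both volume and the Chern--Simons invariant. The paper is more careful here: SnapPy offers verified computation of volume but \emph{not} of the Chern--Simons invariant, and the paper explicitly flags (in a remark following the proof) that the four amphichiral knots $10_{33}$, $16n600112$, $16n786382$, $16n988939$---for which volume alone fails and CS is needed---should strictly speaking be excluded if one insists on rigor. Your claim that HIKMOT-style methods certify CS is an overclaim; certified CS computation is not currently available in that toolchain.
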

\begin{proof}
We computed the $UV = 0$ knot Floer complex for all $\le 16$ crossing prime knots, using a program of Szab{\'o}, and then checked each against the obstructions described in this paper\footnote{All code used for these computations is available at \url{https://github.com/hanselman/CFK-immersed-curves}}. Recall that the $UV = 0$ knot Floer complex of $K$ is equivalent to the immersed curve invariant $\hatGamma(K)$. We make two observations from these computations:
\begin{itemize}
\item The maximum thickness of any prime knot up to 16 crossings is two; and
\item For each genus two knot up to 16 crossings, $\hatGamma(K)$ contains only simple figure eight components besides $\gamma_0$.
\end{itemize}
The first observation tells us immediately that we only need to consider genus two knots and we only need to consider the slopes $\pm 1$ and $\pm 2$. The second observation tells us that for these knots we can use the obstruction in Proposition \ref{prop:figure-eight-constraints-thin}.

There are $1,701,935$ knots up to 16 crossings. We note that the results of Ni and Wu (specifically conclusion $(i)$ in Theorem \ref{thm:NiWu}), already verify the conjecture for over two thirds of these knots: after restricting to knots with $\tau(K) = 0$, we are left with $449,417$ knots (requiring that $\epsilon = 0$ rather than $\tau = 0$ eliminates a further $38$ knots, leaving $449,379$). It turns out that the obstructions coming from Theorem \ref{thm:main} are much stronger. Among knots with $\epsilon(K) = 0$, requiring also that $g(K) = 2$ reduces the list to $3,316$. Finally, the obstruction in Proposition \ref{prop:figure-eight-constraints-thin} rules out truly cosmetic surgery on all but $337$ of these knots. The remaining knots are listed in Table \ref{table:remaining-knots}. Thus we have reduced to $674$ possible pairs of cosmetic surgeries, $\pm 1$ and $\pm 2$ surgeries on each of these 337 knots.  

This is the best that Heegaard Floer techniques alone can tell us; as noted in the previous section, for any knot $K$ satisfying the constraint in Proposition \ref{prop:figure-eight-constraints-thin}, Heegaard Floer homology cannot distinguish $S^3_{+1}(K)$ from $S^3_{-1}(K)$, nor can it distinguish $S^3_{+2}(K)$ from $S^3_{-2}(K)$. So these last examples must be ruled out using other methods. Computing the hyperbolic volume for the manifolds in question using SnapPy, we find that this distinguishes every pair except for the surgeries on four knots: $10_{33}$, $16n600112$, $16n786382$, and $16n988939$. These knots are amphichiral, so $+r$ surgery and $-r$ surgery can never be distinguished by hyperbolic volume. For these manifolds, the $\pm 1$ and $\pm 2$ surgery pairs on each of these four knots are distinguished by the Chern-Simons invariant, also computed by SnapPy\footnote{The author thanks Dave Futer for suggesting the use of the Chern-Simons invariant for amphichiral examples}. 
\end{proof}

\begin{remark}
We caution that Theorem \ref{thm:16-crossings} depends on computer calculations, some of which are non-verified. The computation of the knot Floer complex is combinatorial in nature, but SnapPy uses numerical methods to compute hyperbolic invariants so these computations should not be taken as rigorous proof. SnapPy does offer verified computation, via interval arithmetic, of hyperbolic volume but not of the Chern-Simons invariant. Thus, to be fully rigorous, the knots $10_{33}$, $16n600112$, $16n786382$, and $16n988939$ should be excluded from Theorem \ref{thm:16-crossings}. We do note that SnapPy \emph{estimates} the Chern-Simons computations are accurate to 10 decimal places, and for each relevant pair of manifolds the values differ by at least .001.
\end{remark}

\begin{remark}
Some of the 337 knots listed in Table \ref{table:remaining-knots} can be ruled out using the Jones polynomial as in \cite{IchiharaWu}, instead of using hyperbolic volumes. Unfortunately this does not help with any of the four knots for which the Chern-Simons invariant was needed.
\end{remark}

\begin{table}
\renewcommand{\arraystretch}{1.2} 
\small
\begin{tabular}{l l l l l l}
\midrule
9: & 41, 44 & 10: & 33, 136, 146 & 11a: & 333 \\
12a: & 1144 & 13a: & - & 14a: & 17464\\
15a: & 76589, 84220 \phantom{spacespacespa} & 16a: & 345268, 345454, 374264 \phantom{spacespacespa} & 11n: & 18, 42, 62, 83\\
\end{tabular}
\begin{tabular}{l l}
12n: & 34, 65, 278, 313, 360, 393, 430, 483, 550, 650, 846, 884\\
13n: & 71, 198, 490, 1019, 1209, 1398, 1513, 1598, 1756, 1757, 2337, 2703, 2796, 3290, 3416, 3783, 4591\\
14n: & 372, 971, 1193, 2087, 2489, 6421, 7228, 7412, 7469, 7534, 8091, 8196, 8554, 8716, 9290, 9684, 9829, \\
 & 10155, 11129, 11429, 12224, 12609, 12977, 13570, 14799, 15285, 15380, 15581, 15965, 15976, 17163, \\
 & 17183, 18494, 19673, 21231, 21269, 22150, 22196, 22614, 22634, 23325, 24593, 27072, 27091\\
15n: & 1058, 3240, 4898, 9477, 11491, 19192, 21666, 21997, 27824, 30711, 34041, 34773, 36113, 38567, \\
& 38594, 41604, 43982, 46350, 46536, 49081, 51379, 51847, 54458, 58840, 62260, 63468, 63550, 64468, \\
& 67694, 67879, 71170, 73390, 73507, 76978, 77245, 77247, 77784, 83761, 84434, 84645, 88899, 91448, \\
& 93899, 94474, 96914, 97157, 102309, 104775, 105829, 106611, 118711, 120250, 124511, 129229, \\
& 129231, 132539, 135706, 137623, 140373, 140582, 142082, 142299, 142716, 142841, 142843, 143482, \\
& 143825, 143856, 144436, 144439, 144887, 147186, 156806, 160027\\
16n: & 5596, 9193, 16004, 24365, 27992,49009, 60136, 67523, 94939, 102539, 102773, 191694, 196472, \\
& 197735, 203049, 215168, 218032, 219174, 220556, 227624, 230857, 233335, 239267, 239379, 242042, \\
& 242545, 249927, 265957, 271606, 271610, 273164, 277974, 280482, 285128, 306917, 307635, 315594, \\
& 324571, 329529, 332372, 349983, 353272, 360174, 366612, 376208, 385669, 386732, 387806, 401152, \\
& 401963, 402644, 405088, 412371, 423420, 424451, 429723, 438719, 440479, 441595, 459035, 460502, \\
& 461585, 463225, 463419, 465019, 466470, 467558, 469510, 470606, 470717, 473737, 475444, 481843, \\
& 489486, 493489, 494163, 498542, 498651, 508893, 513585, 515663, 534392, 540621, 544661, 550305, \\
& 551107, 577882, 585135, 587843, 588588, 596192, 596449, 597513, 598535, 599034, 600112, 606009, \\
& 608181, 609311, 609798, 614804, 614907, 617672, 628265, 629526, 631987, 632225, 635338, 666646, \\
& 687419, 691300, 696924, 696992, 725574, 761555, 762559, 767010, 768788, 770126, 774829, 784110, \\
& 786382, 788898, 789181, 798964, 809799, 810368, 812243, 824554, 828723, 847911, 855704, 855909, \\
& 862009, 863179, 864017, 864258, 864259, 869439, 869441, 874997, 879694, 880152, 888060, 888954, \\
& 902353, 906603, 907441, 907673, 916183, 916207, 916242, 918157, 919068, 925408, 932460, 941562, \\
& 941564, 968742, 972142, 988939, 989795, 990225, 990270, 991069, 991085, 998071, 1000650, 1000651, \\
& 1001406, 1001474, 1004278, 1004646 \\
\midrule
\end{tabular}
\caption{Knots up to 16 crossing for which Heegaard Floer homology does not rule out all cosmetic surgeries. The only pairs of slopes not ruled out for each of these knots are $\pm 1$ and $\pm 2$.}
\label{table:remaining-knots}
\end{table} 

The result above considers prime knots, but Theorem \ref{thm:main} is also very good at obstructing truly cosmetic surgeries on connected sums. In fact, with only a little more work, we can rule out cosmetic surgeries on all knots whose prime summands have at most 16 crossings. The following is equivalent to Theorem \ref{thm:16-crossings-connected-sums} stated in the introduction.
\begin{theorem}
The cosmetic surgery conjecture holds for the connected sum of any number of prime knots each with $\le 16$ crossings.
\end{theorem}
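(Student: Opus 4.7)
The plan is to reduce the connected-sum case to the prime case of Theorem \ref{thm:16-crossings} by exploiting the additivity of the relevant invariants. Write $K = K_1 \# \cdots \# K_n$ with each $K_i$ a nontrivial prime knot of at most $16$ crossings and suppose $K$ admits a truly cosmetic surgery. Genus is additive under connected sum, and because $\CFKhat$ of a connected sum is the tensor product of the factors and the $\delta$-grading is additive under tensor, the Heegaard Floer thickness is also additive. Since $th(K_i) \le 2$ for every prime knot with at most $16$ crossings (the key numerical input to Theorem \ref{thm:16-crossings}) and $g(K_i) \ge 1$, we have $th(K) \le 2n$ and $g(K) \ge n$.

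First I would use Theorem \ref{thm:main} to pare the candidates down. For slope $\pm 2$, part (ii) gives $g(K) = 2$, so $n \in \{1,2\}$. For slope $\pm 1/q$, part (iii) together with $th(K)\le 2n$ and $n\le g(K)$ yields
\[
1 \le q \le \frac{th(K)+2g(K)}{2g(K)(g(K)-1)} \le \frac{n+g(K)}{g(K)(g(K)-1)} \le \frac{2}{g(K)-1},
\]
so $g(K)\le 3$; Corollary \ref{cor:genus1} rules out $g(K)=1$, and when $g(K)=3$ the middle inequality forces $n=3$. Thus the only non-prime configurations surviving are $K = K_1 \# K_2$ with both factors of genus one, or $K = K_1 \# K_2 \# K_3$ with all three factors of genus one.

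Next I would impose structural constraints on the summands. By Theorem \ref{thm:NiWu-enhanced}, $\CFKinfty(K)$ has an isolated generator, and since $\CFKinfty(K)$ is the tensor product of the $\CFKinfty(K_i)$, this property must descend to each factor, giving in particular $\epsilon(K_i)=\tau(K_i)=0$. The $16$-crossing data from Theorem \ref{thm:16-crossings} then provides a short explicit list of admissible $K_i$. For each remaining candidate $K = K_1\#\cdots\#K_n$ I would apply the Boyer--Lines constraint $\Delta_K''(1)=0$: writing each genus-one Alexander polynomial as $\Delta_{K_i}(t) = a_i t - (2a_i-1) + a_i t^{-1}$, and using $\Delta_{K_i}(1)=1$ and $\Delta_{K_i}'(1)=0$, the second derivative of the product at $t=1$ collapses to $2\sum_i a_i$, so $\sum_i a_i = 0$. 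This linear relation, combined with the very short list of available $a_i$, eliminates essentially every candidate immediately.

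Any cases still standing after this step are handled by computing $n_s$ directly from the tensor-product description of $\hatGamma(K)$ and verifying that Proposition \ref{prop:ns-ratio} does not produce an integer $q\ge 1$, or that the condition $n_0=2n_1$ of Theorem \ref{thm:main}(ii) fails; in the worst case, hyperbolic volume and Chern--Simons invariants distinguish the finitely many remaining surgery pairs, exactly as in the proof of Theorem \ref{thm:16-crossings}. I expect the main obstacle to be the structural claim that an isolated generator in $\CFKinfty(K_1)\otimes\CFKinfty(K_2)$ forces one in each factor; this requires a careful argument about direct summands of bifiltered $\F[U,V]/(UV)$-complexes modulo acyclics. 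Once that lemma is in hand, the rest is a finite, mechanical enumeration on the same knot data used for the prime case.
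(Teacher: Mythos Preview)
Your reduction to the two cases $n=2$ (both summands genus one) and $n=3$ (all three summands genus one) via additivity of genus and thickness is correct and matches the paper. The gap is in the next step: the structural lemma you flag as the ``main obstacle'' --- that an isolated generator in $\CFKinfty(K_1)\otimes\cdots\otimes\CFKinfty(K_n)$ forces an isolated generator in each $\CFKinfty(K_i)$ --- is simply false. Take $K_1$ the right-handed trefoil and $K_2$ the left-handed trefoil: neither has $\epsilon=0$, but $K_1\# K_2$ is slice, hence $\CFKinfty(K_1\# K_2)$ has an isolated generator. So you cannot conclude $\epsilon(K_i)=\tau(K_i)=0$ for the summands, and the ``short explicit list of admissible $K_i$'' you build from that conclusion is wrong; in particular your Boyer--Lines step would have to be run over a much larger set of genus-one factors, and $\sum a_i = 0$ by itself does not finish (e.g.\ trefoil paired with figure-eight gives $a_1+a_2=1+(-1)=0$).

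The paper avoids this entirely. It uses only $\tau(K)=0$ (which \emph{is} additive) together with a computational observation: for every genus-one prime knot with at most 16 crossings, $\hatGamma$ consists of simple figure-eights plus a $\gamma_0$ that is either horizontal, the right-trefoil curve, or the left-trefoil curve. The condition $\tau(K)=0$ then constrains which combinations of $\gamma_0$-types can appear among the summands (e.g.\ for $n=2$ either both horizontal or one RHT and one LHT). From there the paper computes $e_0,e_1,e_2$ of the tensor product explicitly as polynomials in the numbers $a,b,c$ of figure-eight components in the factors, and checks directly that the required relations ($e_0=2e_1$, $e_0=6e_1$, or $e_0=2e_1+7e_2$) fail in every case. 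Your fallback plan of ``computing $n_s$ directly from the tensor-product description'' is exactly this, and it is the whole argument rather than a last resort; the point is that it must be carried out allowing $\epsilon(K_i)\neq 0$.
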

\begin{proof}
Suppose that $K$ has $n > 1$ prime summands, each with at most 16 crossings. If follows that $g(K) \ge n$ and that $th(K) \le 2n$, since both genus and thickness are additive with respect to connected sum and the maximum thickness for knots up to 16 crossings is two. Suppose $\pm r$ is a pair of truly cosmetic surgery slopes for $K$. If $g(K) > 2$, then by Theorem \ref{thm:main} we must have $r = 1/q$ with
$$q \le \frac{th(K)}{2g(K)(g(K)-1)} + \frac{1}{g(K)-1} \le \frac{2n}{2n(n-1)} + \frac{1}{n-1} = \frac{2}{n-1}.$$
Since $q$ must be $\ge 1$, it follows that $n \le 3$. 

Moreover, if $n = 3$ then $g(K) = 3$ and $q=1$. If $n = 2$ then $g(K) = 2$ but $q$ can be 1 or 2; we also must consider the case that $g(K) = 2$ and $r = 2$. By Proposition \ref{prop:ns-ratio}, if $n = 3$ we require that $e_0 = 2e_1 +7e_2$, while if $n=2$ we require $e_0 = 2e_1$ if $r \in \{1,2\}$ and $e_0 = 6e_1$ if $r = 1/2$.

We will need one more observation from our computations of $\hatGamma$:
\begin{itemize}
\item For knots with $\le 16$ crossings and $g(K) = 1$, all curves other than $\gamma_0$ are simple figure eights, and $\gamma_0$ has one of the three forms: the horizontal curve, the curve which is the invariant of the right-handed trefoil (see Figure \ref{fig:curve-examples}), or the mirror of the right-handed trefoil curve (which is the invariant of the left-handed trefoil). 
\end{itemize}
We will denote the three possibilities for $\gamma_0$ above as $\gamma_0^{horiz}$, $\gamma_0^{RHT}$, and $\gamma_0^{LHT}$. Note that they are distinguished by the value of $\tau$, which is 0, 1, and $-1$, respectively.

If $n = 2$, we must have $K = K_1 \# K_2$ with $g(K_1) = g(K_2) = 1$. By the Kunneth formula for knot Floer homology, $\hatGamma(K) = \hatGamma(K_1) \otimes \hatGamma(K_2)$\footnote{This is an abuse of notation, we really mean the tensor product of the corresponding bifiltered chain complexes; an immersed curve description of the tensor product operation is given in \cite[Section 4]{HRW:companion}}. A straightforward computation reveals that the tensor product of two figure eight components (at height 0) yields four figure eight components at heights $1,0,0$, and $-1$. The tensor product of a figure eight component and either $\gamma_0^{RHT}$ or $\gamma_0^{LHT}$ gives three figure eight components at heights $-1, 0,$ and $1$, and the tensor product of $\gamma_0^{RHT}$ with $\gamma_0^{LHT}$ yields a horizontal curve $\gamma_0$ along with two figure eights at heights $-1$ and $1$. Taking the tensor product of any curve with $\gamma_0^{horiz}$ gives a copy of that curve. Let $a$ and $b$ be the number of height one figure eight components in $\hatGamma(K_1)$ and $\hatGamma(K_2)$. Since $\tau$ is additive and $\tau(K) = 0$, the $\gamma_0$ curves for $K_1$ and $K_2$ either are $\gamma_0^{RHT}$ and $\gamma_0^{LHT}$ (in that order, without loss of generality) or they are both horizontal and $a,b > 0$. In the first case, we see that $\hatGamma(K)$ has $e_0 = 2ab + a + b$ and $e_1 = ab + a + b + 1$; this is impossible since then $e_0 < 2e_1$. In the second case we have $e_0 = 2ab + a + b$ and $e_1 = ab$; this is impossible because $\tfrac{e_0}{e_1} = 2 + \tfrac{1}{a} + \tfrac{1}{b}$ is strictly greater than 2 and strictly less than 6. Thus $n \neq 2$.

If $n = 3$, we have $K = K_1 \# K_2 \# K_3$ with $g(K_i) = 1$. Let $a$, $b$, and $c$ be the number of figure eight components in $\hatGamma$ for $K_1$, $K_2$, and $K_3$. Since $\tau(K) = 0$, either $\gamma_0$ for the three knots is given (up to reordering) by $\gamma_0^{RHT}$, $\gamma_0^{LHT}$, and $\gamma_0^{horiz}$, or $\gamma_0$ is horizontal for all three knots. In the first case, we compute that $e_0 = 6abc + 4ac + 4bc + 3c +2ab + a + b$, and that $e_1 = 4abc + 3ac+3bc+2c + ab + a + b + 1$. It follows that $2e_1 - e_0 = 2abc + 2ac + 2bc + c + a + b + 2 > 0$, which is a contradiction because we require $e_ 0 = 2e_1 + 7e_2 \ge 2e_1$. In the second case, we compute that $e_0 = 4abc + 2(ab+ac+bc) + a+b+c$, $e_1 = 3abc + (ab+ac+bc)$, and $e_2 = abc$. It follows that $7e_2 + 2e_1 > e_0$, since $7e_2 + 2e_1 - e_0 = 9abc - (a+b+c)$ is strictly positive. Thus $n \neq 3$.
\end{proof} 

\begin{remark}
Note that in the above proof, the thickness bound immediately ruled out connected sums of more than three 16 crossing knots. Since $th(K)$ and $g(K)$ are both additive and the upper bound on $q$ goes like $th/g^2$, this behavior is expected. In fact, for any finite set of knots Theorem \ref{thm:main} prohibits truly cosmetic surgeries on connected sums of sufficiently many knots in the initial set.
\end{remark}

\bibliographystyle{plain}
\bibliography{bibliography}

\begin{thebibliography}{10}

\bibitem{BoyerLines}
Steven Boyer and Daniel Lines.
\newblock Surgery formulae for {C}asson's invariant and extensions to homology
  lens spaces.
\newblock {\em J. Reine Angew. Math.}, 405:181--220, 1990.

\bibitem{FuterPurcellSchleimer}
David Futer, Jessica~S. Purcell, and Saul Schleimer.
\newblock Effective bilipschitz bounds on drilling and filling.
\newblock Preprint, available
  \url{https://math.temple.edu/~dfuter/research/EffectiveBilipschitz.pdf}.

\bibitem{Gainullin}
Fyodor Gainullin.
\newblock The mapping cone formula in {H}eegaard {F}loer homology and {D}ehn
  surgery on knots in {$S^3$}.
\newblock {\em Algebr. Geom. Topol.}, 17(4):1917--1951, 2017.

\bibitem{GordonLuecke}
C.~McA. Gordon and J.~Luecke.
\newblock Knots are determined by their complements.
\newblock {\em J. Amer. Math. Soc.}, 2(2):371--415, 1989.

\bibitem{Gordon}
Cameron~McA. Gordon.
\newblock Dehn surgery on knots.
\newblock In {\em Proceedings of the {I}nternational {C}ongress of
  {M}athematicians, {V}ol. {I}, {II} ({K}yoto, 1990)}, pages 631--642. Math.
  Soc. Japan, Tokyo, 1991.

\bibitem{Hanselman:CFK}
Jonathan Hanselman.
\newblock Knot floer homology via immersed curves.
\newblock In preparation.

\bibitem{Hanselman:graph}
Jonathan Hanselman.
\newblock Bordered {H}eegaard {F}loer homology and graph manifolds.
\newblock {\em Algebr. Geom. Topol.}, 16(6):3103--3166, 2016.

\bibitem{HRW}
Jonathan Hanselman, Jacob Rasmussen, and Liam Watson.
\newblock Bordered {F}loer homology for manifolds with torus boundary via
  immersed curves.
\newblock Preprint, arXiv:1604.03466.

\bibitem{HRW:companion}
Jonathan Hanselman, Jacob Rasmussen, and Liam Watson.
\newblock Heegaard {F}loer homology for manifolds with torus boundary:
  properties and examples.
\newblock Preprint, arXiv:1810.10355.

\bibitem{HW}
Jonathan Hanselman and Liam Watson.
\newblock {A calculus for bordered {F}loer homology}.
\newblock Preprint, arXiv:1508.05445.

\bibitem{Hom:cables}
Jennifer Hom.
\newblock Bordered {H}eegaard {F}loer homology and the tau-invariant of cable
  knots.
\newblock {\em J. Topol.}, 7(2):287--326, 2014.

\bibitem{Hom:survey}
Jennifer Hom.
\newblock A survey on {H}eegaard {F}loer homology and concordance.
\newblock {\em J. Knot Theory Ramifications}, 26(2):1740015, 24, 2017.

\bibitem{IchiharaWu}
Kazuhiro Ichihara and Zhongtao Wu.
\newblock A note on {J}ones polynomial and cosmetic surgery.
\newblock Preprint, arXiv:1606.03372.

\bibitem{KirbyProblemList}
Rob Kirby.
\newblock Problems in low-dimensional topology.
\newblock In Rob Kirby, editor, {\em Geometric topology ({A}thens, {GA},
  1993)}, volume~2 of {\em AMS/IP Stud. Adv. Math.}, pages 35--473. Amer. Math.
  Soc., Providence, RI, 1997.

\bibitem{NiWu}
Yi~Ni and Zhongtao Wu.
\newblock Cosmetic surgeries on knots in {$S^3$}.
\newblock {\em J. Reine Angew. Math.}, 706:1--17, 2015.

\bibitem{OzSz:absolutely-graded}
Peter Ozsv\'{a}th and Zolt\'{a}n Szab\'{o}.
\newblock Absolutely graded {F}loer homologies and intersection forms for
  four-manifolds with boundary.
\newblock {\em Adv. Math.}, 173(2):179--261, 2003.

\bibitem{OzSz:knot-floer}
Peter Ozsv\'{a}th and Zolt\'{a}n Szab\'{o}.
\newblock Holomorphic disks and knot invariants.
\newblock {\em Adv. Math.}, 186(1):58--116, 2004.

\bibitem{OzSz:lens-space-surgeries}
Peter Ozsv{\'a}th and Zolt{\'a}n Szab{\'o}.
\newblock On knot {F}loer homology and lens space surgeries.
\newblock {\em Topology}, 44(6):1281--1300, 2005.

\bibitem{OzSz:rational-surgeries}
Peter~S. Ozsv\'{a}th and Zolt\'{a}n Szab\'{o}.
\newblock Knot {F}loer homology and rational surgeries.
\newblock {\em Algebr. Geom. Topol.}, 11(1):1--68, 2011.

\bibitem{Ras:lens-space-surgeries}
Jacob Rasmussen.
\newblock Lens space surgeries and a conjecture of {G}oda and {T}eragaito.
\newblock {\em Geom. Topol.}, 8:1013--1031, 2004.

\bibitem{Ras:knot-floer}
Jacob~Andrew Rasmussen.
\newblock {\em Floer homology and knot complements}.
\newblock ProQuest LLC, Ann Arbor, MI, 2003.
\newblock Thesis (Ph.D.)--Harvard University.

\bibitem{Tao}
Ran Tao.
\newblock Cable knots do not admit cosmetic surgeries.
\newblock {\em J. Knot Theory Ramifications}, 28(4):1950034, 11, 2019.

\bibitem{Wang}
Jiajun Wang.
\newblock Cosmetic surgeries on genus one knots.
\newblock {\em Algebr. Geom. Topol.}, 6:1491--1517, 2006.

\bibitem{Wu}
Zhongtao Wu.
\newblock Cosmetic surgery in {L}-space homology spheres.
\newblock {\em Geom. Topol.}, 15(2):1157--1168, 2011.

\bibitem{Xiu}
Yang Xiu.
\newblock Elliptic involution on knot complements.
\newblock Preprint, arXiv:1511.02298.

\end{thebibliography}

\end{document}